\let\oldtocsection=\tocsection
\let\oldtocsubsection=\tocsubsection 
\let\oldtocsubsubsection=\tocsubsubsection
\renewcommand{\tocsection}[2]{\vspace{0.5em}\hspace{0em}\oldtocsection{#1}{#2}}
\renewcommand{\tocsubsection}[2]{\vspace{0.5em}\hspace{1em}\oldtocsubsection{#1}{#2}}
\renewcommand{\tocsubsubsection}[2]{\vspace{0.5em}\hspace{2em}\oldtocsubsubsection{#1}{#2}}
\def\nline{\\ \noalign{\medskip}}
\newtheorem{theoreme}{Theorem}[section]
\theoremstyle{definition}
\numberwithin{equation}{section}
\renewenvironment{proof}{{\bfseries \noindent Proof.}}{\demo}
\newcommand\xqed[1]{%
	\leavevmode\unskip\penalty9999 \hbox{}\nobreak\hfill
	\quad\hbox{#1}}
\newcommand\demo{\xqed{$\square$}}
\def\R{\mathbb R}
\def\HH{\mathcal H}
\def\AA{\mathcal A}
\def\la {{\lambda}}
\newcommand {\nc}   {\newcommand}
\nc {\be}   {\begin{equation}} \nc {\ee}   {\end{equation}} \nc
\nc {\eeq}  {\end{eqnarray}} \nc {\beqs}
\nc {\eeqs} {\end{eqnarray*}}
\def\edc{\end{document}}
\providecommand{\abs}[1]{\lvert#1\rvert}
\numberwithin{equation}{section}
\theoremstyle{Thm}
\newtheorem{Thm}{Theorem}[section]
\newtheorem{lem}{Lemma}[section]
\newtheorem{prop}{Proposition}[section]
\newtheorem{rk}{Remark}[section]
\definecolor{carnelian}{rgb}{0.7, 0.11, 0.11}
\definecolor{carmine}{rgb}{0.59, 0.0, 0.09}
\definecolor{burgundy}{rgb}{0.5, 0.0, 0.13}
\definecolor{darkmidnightblue}{rgb}{0.0, 0.2, 0.4}
\definecolor{dimgray}{rgb}{0.75, 0.75, 0.75}
\definecolor{palecarmine}{rgb}{0.69, 0.25, 0.21}
\newcounter{dummy} 
\numberwithin{dummy}{section}
\newtheorem{Theorem}[dummy]{Theorem}
\newtheorem{defi}[dummy]{Definition}
\newtheorem{Remark}[dummy]{Remark}
\numberwithin{equation}{section}
\def\AA{\mathcal A}
\def\HH{\mathbf{\mathcal H}}
\newcommand{\intdx}{\int_{0}^{L} }
\newcommand{\intdnb}{\int_{0 }^{\beta } }
\newcommand{\intdnc}{\int_{\alpha }^{\gamma } }
\newcommand{\intdni}{\int_{\alpha }^{\beta } }
\providecommand{\abs}[1]{\lvert#1\rvert}
\begin{document}
	\title[\fontsize{7}{9}\selectfont  ]{Stability results of a singular local interaction elastic/viscoelastic coupled wave equations with time delay}
\author{Mohammad Akil$^{1}$}
\author{Haidar Badawi$^{2}$}
\author{Ali Wehbe$^{3}$}
\address{$^1$ Universit\'e Savoie Mont Blanc, Laboratoire LAMA, Chamb\'ery-France}
\address{$^2$ Universit\'e Polytechnique Hauts-de-France
Valenciennes, LAMAV, France}
\address{$^3$Lebanese University, Faculty of sciences 1, Khawarizmi Laboratory of  Mathematics and Applications-KALMA, Hadath-Beirut, Lebanon.}
\email{mohamadakil1@hotmail.com, Haidar.Badawi@etu.uphf.fr,   \\ ali.wehbe@ul.edu.lb}
\keywords{Coupled wave equation; Kelvin-Voigt damping; Time delay; Strong stability; Polynomial stability; Frequency domain approach}
\begin{abstract}
The purpose of this paper is to investigate the stabilization of a one-dimensional coupled wave equations with non smooth localized viscoelastic damping of Kelvin-Voigt type and localized time delay. Using a general criteria of Arendt-Batty, we show the strong stability of our system in the absence of the compactness of the resolvent. Finally, using frequency domain approach combining with a multiplier method, we prove a polynomial energy decay rate of order  $t^{-1}$.
\end{abstract}

\maketitle
\pagenumbering{roman}
\maketitle
\tableofcontents
\pagenumbering{arabic}
\setcounter{page}{1}
\newpage
\section{Introduction}\noindent 
\subsection{Description of the paper}
In this paper, we investigate the stability of local coupled  wave equations with singular localized viscoelastic damping of Kelvin-Voigt type and localized  time delay. More precisely, we consider the following System: 
\begin{equation}\label{sysorig} \left\{	\begin{array}{llll}\vspace{0.15cm}
		u_{tt}-\left[au_x +b(x)(\kappa_1 u_{tx}+\kappa_2 u_{tx}\left(x,t-\tau)\right)\right]_x +c(x)y_t =0,& (x,t)\in (0,L)\times (0,\infty) ,&\\  \vspace{0.15cm}
		y_{tt}-y_{xx}-c(x)u_t =0,  &(x,t)\in (0,L)\times (0,\infty) ,&\\\vspace{0.15cm}
		u(0,t)=u(L,t)=y(0,t)=y(L,t)=0,& t>0 ,& \\\vspace{0.15cm}
		(u(x,0),u_t (x,0))=(u_0 (x),u_1 (x)), &x\in (0,L),&\\\vspace{0.15cm}	(y(x,0),y_t (x,0))=(y_0 (x),y_1 (x)), &x\in (0,L),&\\\vspace{0.15cm}
		u_t (x,t)=f_0 (x,t), &(x,t)\in(0,L)\times(-\tau,0),&
	\end{array}\right.
\end{equation}
where $L,\tau,a$ and $\kappa_1$ are  positive real numbers, $\kappa_2$ is a non-zero real number and $(u_0,u_1,y_0,y_1,f_0)$ belongs to a suitable space. We suppose that there exists $0<\alpha <\beta <\gamma  <L$ and a  positive constant $c_0$, such that   
\begin{equation*}
	b(x)=\left\{	\begin{array}{lll}\vspace{0.15cm}
		1,& x\in (0 ,\beta), &\\
		0,&x\in (\beta ,L),&
	\end{array}\right. \text{and }\quad
	c(x)=\left\{	\begin{array}{lll}\vspace{0.15cm}
		c_0 ,& x\in (\alpha ,\gamma), &\\
		0,&x\in (0,\alpha )\cup (\gamma ,L).&
	\end{array}\right.
\end{equation*}
The Figure \ref{FIG-1} describes system \eqref{sysorig}
\begin{figure}[h!]
	\begin{center}
		\begin{tikzpicture}
		

		
		
		\draw[-,dimgray][thick](4.125,0)--(4.125,2);
		\draw[-,dimgray][thick](4.25,0)--(4.25,2);
		\draw[-,dimgray][thick](4.375,0)--(4.375,2);
		\draw[-,dimgray][thick](4.5,0)--(4.5,2);
		\draw[-,dimgray][thick](4.625,0)--(4.625,2);
		\draw[-,dimgray][thick](4.75,0)--(4.75,2);
		\draw[-,dimgray][thick](4.875,0)--(4.875,2);
		\draw[-,dimgray][thick](5,0)--(5,2);
		\draw[-,dimgray][thick](5.125,0)--(5.125,2);
		\draw[-,dimgray][thick](5.25,0)--(5.25,2);
		\draw[-,dimgray][thick](5.375,0)--(5.375,2);
		\draw[-,dimgray][thick](5.5,0)--(5.5,2);
		\draw[-,dimgray][thick](5.625,0)--(5.625,2);
		\draw[-,dimgray][thick](5.75,0)--(5.75,2);
		\draw[-,dimgray][thick](5.875,0)--(5.875,2);
		
		\draw[-,dimgray][thick](6.125,0)--(6.125,2);
		\draw[-,dimgray][thick](6.25,0)--(6.25,2);
		\draw[-,dimgray][thick](6.375,0)--(6.375,2);
		\draw[-,dimgray][thick](6.5,0)--(6.5,2);
		\draw[-,dimgray][thick](6.625,0)--(6.625,2);
		\draw[-,dimgray][thick](6.75,0)--(6.75,2);
		\draw[-,dimgray][thick](6.875,0)--(6.875,2);
		\draw[-,dimgray][thick](7,0)--(7,2);
		\draw[-,dimgray][thick](7.125,0)--(7.125,2);
		\draw[-,dimgray][thick](7.25,0)--(7.25,2);
		\draw[-,dimgray][thick](7.375,0)--(7.375,2);
		\draw[-,dimgray][thick](7.5,0)--(7.5,2);
		\draw[-,dimgray][thick](7.625,0)--(7.625,2);
		\draw[-,dimgray][thick](7.75,0)--(7.75,2);
		\draw[-,dimgray][thick](7.875,0)--(7.875,2);

		\draw[-,palecarmine][thick](2,0.125)--(6,0.125);
		\draw[-,palecarmine][thick](2,0.25)--(6,0.25);
		\draw[-,palecarmine][thick](2,0.375)--(6,0.375);
		\draw[-,palecarmine][thick](2,0.5)--(6,0.5);
		\draw[-,palecarmine][thick](2,0.625)--(6,0.625);
		\draw[-,palecarmine][thick](2,0.75)--(6,0.75);
		\draw[-,palecarmine][thick](2,0.875)--(6,0.875);
		\draw[-,palecarmine][thick](2,1)--(6,1);
		\draw[-,palecarmine][thick](2,1.125)--(6,1.125);
		\draw[-,palecarmine][thick](2,1.25)--(6,1.25);
		\draw[-,palecarmine][thick](2,1.375)--(6,1.375);
		\draw[-,palecarmine][thick](2,1.5)--(6,1.5);
		\draw[-,palecarmine][thick](2,1.625)--(6,1.625);
		\draw[-,palecarmine][thick](2,1.75)--(6,1.75);
		\draw[-,palecarmine][thick](2,1.875)--(6,1.875);


		\node[darkmidnightblue,above] at (3.85,2.75){\scalebox{0.75}{ Viscoelastic region \& time delay  }}; 
		\node[darkmidnightblue,below] at (6,-0.75){\scalebox{0.75}{ Coupling region}}; 

		\draw[-,darkmidnightblue][ultra thick](2,0)--(10,0);
		\draw[-,darkmidnightblue][ultra thick](2,2)--(10,2);

		\draw[-,darkmidnightblue][ultra thick](10,0)--(10,2);
		\draw[-,darkmidnightblue][ultra thick](2,0)--(2,2);
		\draw[-,darkmidnightblue][ultra thick](4,0)--(4,2);
		\draw[-,darkmidnightblue][ultra thick](6,0)--(6,2);
		\draw[-,darkmidnightblue][ultra thick](8,0)--(8,2);

		\node at (2,0) [circle, scale=0.4, draw=darkmidnightblue!80,fill=darkmidnightblue!80] {};
		\node[darkmidnightblue,below] at (2,0){\scalebox{0.8}{$0$}};   
		
		\node at (4,0) [circle, scale=0.4, draw=darkmidnightblue!80,fill=darkmidnightblue!80] {};
		\node[darkmidnightblue,below] at (4,0){\scalebox{0.8}{$\alpha$}};        
		
		\node at (6,0) [circle, scale=0.4, draw=darkmidnightblue!80,fill=darkmidnightblue!80] {};
		\node[darkmidnightblue,below] at (6,0){\scalebox{0.8}{$\beta$}};   
		
		\node at (8,0) [circle, scale=0.4, draw=darkmidnightblue!80,fill=darkmidnightblue!80] {};
		\node[darkmidnightblue,below] at (8,0){\scalebox{0.8}{$\gamma$}};  
		
		\node at (10,0) [circle, scale=0.4, draw=darkmidnightblue!80,fill=darkmidnightblue!80] {};
		\node[darkmidnightblue,below] at (10,0){\scalebox{0.8}{$L$}}; 
		
		
		
		\draw [darkmidnightblue,decorate,decoration={brace,amplitude=10pt},xshift=-1pt,yshift=0pt]
		(2,2.25) -- (6,2.25)  ;	
		
		\draw [darkmidnightblue,decorate,decoration={brace,amplitude=10pt,mirror,raise=4pt},yshift=0pt]
		(4,-0.25) -- (8,-0.25)  ;

		\end{tikzpicture}
		\caption{Local Kelvin-Voigt damping and Local time delay feedback.}\label{FIG-1}
	\end{center}
\end{figure}
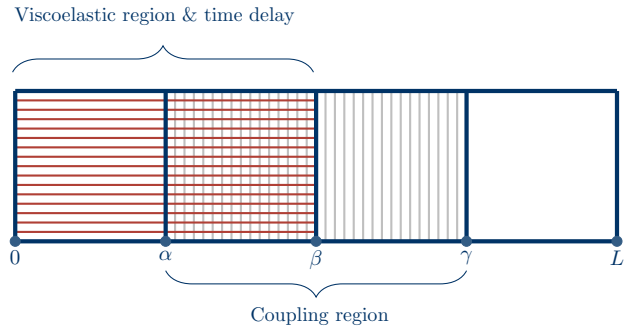\\

System \eqref{sysorig} consists of two wave equations with only one singular viscoelastic damping acting on the first equation, the second one is indirectly damped via a singular coupling between the two equations. The notion of indirect damping mechanisms has been introduced by Russell in \cite{Russell01} and since then, it has  attracted the attention of many authors (see for instance  \cite{akil2018influence}, \cite{Alabau04}, \cite{Alabau02}, \cite{Ammari02}, \cite{Cui2016}, \cite{Abdallah2019}, \cite{LiuRao01} and \cite{ZhangZuazua01} ). The study of such systems is also motivated by several physical considerations like Timoshenko and Bresse systems (see for instance \cite{ghaderbresse}, \cite{AKIL02}, \cite{Denisr01} and \cite{Najdibresse}). In fact, there are few results concerning the stability of coupled wave equations with local Kelvin-Voigt damping without time delay, especially in the absence of smoothness of the damping and coupling coefficients  (see Subsection \ref{subcs} ). The last motivates our interest to study the stabilization of system \eqref{sysorig} in the present paper.\\
 \subsection{Previous Literature}
 The wave is created when a vibrating source disturbs  the medium. In order to restrain those vibrations, several damping can be added such as Kelvin-Voigt damping which is originated from the extension or compression of the vibrating particles. This damping is a viscoelastic structure having properties of both elasticity and viscosity. In the recent years, many researchers showed interest in problems involving this kind of damping where different types of stability, depend on the smoothness of the damping coefficients,  has been showed (see \cite{Alves2014}, \cite{Alves2013}, \cite{F.HASSINE2015}, \cite{HASSINE201584}, \cite{Huang-1988}, \cite{chenLiuLiu-1998},  \cite{Liu2016},   \cite{Portillo2017} and \cite{rivera2018} ). However, time delays have been used in several applications such as in  physical, chemical, biological, thermal phenomenas not only depend on the present state but also on some past occurrences (see \cite{PhysRevE.57.2150},\cite{Kol-mish}) . In the last years, the control of partial differential equations with time delays have become popular among scientists. In many cases the time delay induce  some instabilities see \cite{datko1-inst,datko2-inst,datko1985,dreher2009}.\\\linebreak
However, let us  recall briefly some systems of wave equations with Kelvin-Voigt damping and time delay represented in previous literature.
\subsubsection{Coupled wave equations with Kelvin-Voigt damping and without time delay}\label{subcs}
 In 2019, Hassine and Souayeh in \cite{hassine2019stability} studied the behavior of a system with coupled wave equations with a partial Kelvin-Voigt damping, by considering the following system 
\begin{equation}\label{sys2019}
	\left\{\begin{array}{lll}
		u_{tt}-\left(u_x+b_2(x)u_{tx}\right)_x+v_t=0,\hspace{0.7cm} &(x,t)\in (-1,1)\times (0,\infty),&\vspace{0.15cm} \\
		v_{tt}-cv_{xx}-u_t=0,\hspace{2.5cm}&(x,t)\in (-1,1)\times (0,\infty),& \vspace{0.15cm}\\
		u(0,t)=v(0,t)=0,u(1,t)=v(1,t)=0,\hspace{0.5cm}& t>0,& \vspace{0.15cm}\\
		u(x,0)=u_0(x),u_{t}(x,0)=u_1(x),\hspace{1cm} &x\in (-1,1),&\vspace{0.15cm}\\ 
		v(x,0)=v_0(x),v_{t}(x,0)=v_1(x),\hspace{1.3cm} &x\in (-1,1),&
	\end{array}\right.
\end{equation}
where $c>0$, and $b_2\in L^{\infty}(-1,1)$ is a non-negative function. They assumed that the damping coefficient is piecewise function in particular they supposed that $b_2(x) = d\mathds{1}_{[0,1]}(x)$, where $d$ is a strictly positive constant. So, they took the damping coefficient to be near the boundary with a global coupling coefficient. They showed the lack of exponential stability and that the semigroup loses speed and it decays polynomially with a rate as $t^{-{\frac{1}{12}}}$. In 2020, Akil, Issa and Wehbe in \cite{2020arXiv200406758A} studied the localized coupled wave equations, by considering the following  system:
\begin{equation*} \left\{	\begin{array}{llll}\vspace{0.15cm}
u_{tt}-\left(au_x +b(x) u_{tx}\right)_x +c(x)y_t =0,& (x,t)\in (0,L)\times (0,\infty) ,&\\ \vspace{0.15cm}
y_{tt}-y_{xx}-c(x)u_t =0,  &(x,t)\in (0,L)\times (0,\infty) ,&\\\vspace{0.15cm}
u(0,t)=u(L,t)=y(0,t)=y(L,t)=0,& t>0 ,& \\\vspace{0.15cm}
(u(x,0),u_t (x,0))=(u_0 (x),u_1 (x)), &x\in (0,L),&\\\vspace{0.15cm}	(y(x,0),y_t (x,0))=(y_0 (x),y_1 (x)), &x\in (0,L)&
\end{array}\right.
\end{equation*}where 
\begin{equation*}
b(x)=\left\{	\begin{array}{lll}\vspace{0.15cm}
b_0 ,& x\in (\alpha_1 ,\alpha_3), &\\
0,&\text{otherwise}&
\end{array}\right. \text{and }\quad
c(x)=\left\{	\begin{array}{lll}\vspace{0.15cm}
c_0 ,& x\in (\alpha_2 ,\alpha_4), &\\
0,&\text{otherwise}&
\end{array}\right.
\end{equation*}\\\linebreak and where $a>0$, $b_0 >0$, $c_0 >0$ and $0<\alpha_1 <\alpha_2 <\alpha_3 <\alpha_4 < L$. They generalized the results of Hassine and Souayeh in \cite{hassine2019stability}  by establishing a polynomial decay rate of type $t^{-1}$. In the same year, Hayek {\it et al.} in \cite{Hayek} studied the stabilization of a multi-dimensional system of  weakly coupled wave equations with one or two locally Kelvin-Voigt damping and non-smooth coefficient at the interface. They established different stability results.
\subsubsection{Wave equations with time delay and without Kelvin-Voigt damping}
 The delay equations of hyperbolic type is given by 
\begin{equation}\label{hyp-delay}
	u_{tt}-\Delta u(x,t-\tau)=0.
\end{equation}
with a delay parameter $\tau>0$.  This system is not well posed since there exists a sequence of solutions tending to infinity for any fixed $t>0$ while the norm of the initial data remain bounded (see Theorem 1.1 in \cite{dreher2009}). In 2006, Nicaise and Pignotti in \cite{Nicaise2006} studied the multidimensional wave equation considering two cases. The first case concerns a wave equation with boundary feedback and a delay term at the boundary 
\begin{equation}\label{EQ-Nicaise-2006-1}
	\left\{
	\begin{array}{ll}
		{u}_{tt}(x,t) -  \Delta u (x,t) = 0, & (x,t)\in  \Omega \times (0,\infty), \nline
		{u}(x,t)=0, & (x,t) \in  \Gamma_D \times (0,\infty), \nline
		\frac{\partial u}{\partial \nu} (x,t) = -\mu_1 u_t(x,t)-\mu_2 u_t(x,t-\tau), & (x,t) \in  \Gamma_N \times (0,\infty), \nline
		\left({u}(x,0),{u}_t(x,0)\right)= \left(u_0(x),u_1(x)\right),   & x\in  \Omega,\nline
		\displaystyle{u_t(x,t) = f_0(x,t) },&\displaystyle{(x,t) \in \Gamma_N \times (-\tau, 0) }.
	\end{array}
	\right.
\end{equation}
The second case concerns a wave equation with an internal feedback and a delayed velocity term ({\it i.e.} an internal delay) and a mixed Dirichlet-Neumann boundary condition 
\begin{equation}\label{EQ-Nicaise-2006-2}
	\left\{
	\begin{array}{ll}
		{u}_{tt}(x,t) -  \Delta u (x,t) + \mu_1 u_t (x,t) + \mu_2 u_t (x, t - \tau) = 0, & (x,t)\in  \Omega \times (0,\infty), \nline
		{u}(x,t)=0, & (x,t) \in  \Gamma_D \times (0,\infty), \nline
		\frac{\partial u}{\partial \nu} (x,t)= 0, & (x,t) \in  \Gamma_N \times (0,\infty), \nline
		\left({u}(x,0),{u}_t(x,0)\right)= \left(u_0(x),u_1(x)\right),   & x\in  \Omega,\nline
		\displaystyle{u_t(x,t) = f_0(x,t) },&\displaystyle{(x,t) \in \Omega \times (-\tau,0) },
	\end{array}
	\right.
\end{equation}
where $\Omega$ is an open bounded domain of $\R^N$  with a boundary $\Gamma$ of class $C^2$ and $\Gamma = \Gamma _ D \cup \Gamma_N$, such that $\Gamma _ D \cap \Gamma_N = \emptyset$. Under the assumption $\mu_2<\mu_1$,  an exponential  decay achieved for the both systems \eqref{EQ-Nicaise-2006-1}-\eqref{EQ-Nicaise-2006-2}. If this assumption does not hold, they found a sequences of delays $\{\tau_k\}_{k}$, $\tau_k\to 0$, for which the corresponding solutions have increasing energy. Furthermore, we refer to \cite{benhassi2009} for the Problem \eqref{EQ-Nicaise-2006-2} in more general abstract setting. In 2010, Ammari {\it et al.} (see \cite{ammari2010}) studied the wave equation with interior delay damping and dissipative undelayed boundary condition in an open domain $\Omega$ of $\R^N,\, N \geq 2.$ The system is described  by: 
\begin{equation}\label{EQ-Ammari-2010}
	\left\{
	\begin{array}{ll}
		{u}_{tt}(x,t) -  \Delta u (x,t) + a u_t (x,t - \tau)=0 , & (x,t)\in  \Omega \times (0,\infty), \nline
		{u}(x,t)=0, & (x,t) \in  \Gamma_0\times (0,\infty), \nline
		\frac{\partial u}{\partial \nu} (x,t)=  - \kappa u_t(x,t), & (x,t) \in  \Gamma_1\times  (0,\infty), \nline
		\left({u}(x,0),{u}_t(x,0)\right)= \left(u_0(x),u_1(x)\right),   & x\in  \Omega,\nline
		\displaystyle{u_t(x,t) = f_0(x,t) },&\displaystyle{(x,t) \in \Omega \times (-\tau,0) }, 
	\end{array}
	\right.
\end{equation}
where $\tau > 0$, $a>0$ and $ \kappa > 0$. Under the condition that $\Gamma_1$ satisfies the $\Gamma$-condition introduced in \cite{Lions88}, they proved that system  \eqref{EQ-Ammari-2010} is uniformly asymptotically stable whenever the delay coefficient is sufficiently small.$\newline$
In 2012, Pignotti in \cite{pignotti2012} considered the wave equation with internal distributed time delay and local damping in a bounded and smooth domain $\Omega \subset \R^N, N \geq 1$.  The considered system is given by the following:
\begin{equation}\label{EQ pignotti 2012}
	\left\{
	\begin{array}{ll}
		{u}_{tt}(x,t) -  \Delta u (x,t) + a \chi_{\omega} u_t(x,t) + \kappa u_t (x , t - \tau) = 0, & (x,t)\in  \Omega \times (0,\infty), \nline
		{u}(x,t)=0, & (x,t) \in  \Gamma \times (0,\infty), \nline
		\left({u}(x,0),{u}_t(x,0)\right)= \left(u_0(x),u_1(x)\right), & x\in  \Omega,\nline
		\displaystyle{u_t(x,t) = f(x,t) },&\displaystyle{(x,t) \in \Omega \times (-\tau,0) }, 
	\end{array}
	\right.
\end{equation}
where $\kappa \in \R$, $\tau>0$, $a>0$ and $\omega$ is the intersection between an open neighberhood of the set $\Gamma_0=\left\{x\in\Omega;\quad (x-x_0)\cdot \nu(x)>0\right\}$ and $\Omega$. Moreover, $\chi_{\omega}$ is the characteristic function of $\omega$. We remark that the damping is localized and it acts on a neighberhood of a part of $\Omega$.  She showed an exponential stability results  if the coefficients of the delay terms satisfy the following assumption $\abs{k}<k_0<a$. \\
Several researches was done on wave equation with time delay acting on the boundary see (\cite{datko1985},\cite{DATKO1990}, \cite{Xu2006}, \cite{Guo2008}, \cite{gugat2010}, \cite{wang2011}, \cite{wang2013}, \cite{Xie2017}) and different type of stability has been proved.
\subsubsection{Wave equations with Kelvin-Voigt damping and time delay}
In 2016, Messaoudi {\it et al.} in \cite{messaoudi2016} considered the stabilization of the following wave equation with strong time delay
\begin{equation*}\label{EQ messaoudi 2016}
\left\{
\begin{array}{ll}
{u}_{tt}(x,t) -  \Delta u (x,t) - \mu_1 \Delta u_t (x,t) - \mu_2 \Delta u_t (x, t - \tau) = 0, & (x,t)\in  \Omega \times (0,\infty), \nline
{u}(x,t)=0, & (x,t) \in  \Gamma \times (0,\infty), \nline
\left({u}(x,0),{u}_t(x,0)\right)= \left(u_0(x),u_1(x)\right),   & x\in  \Omega,\nline
\displaystyle{u_t(x,t) = f_0(x,t) },&\displaystyle{(x,t) \in \Omega \times (-\tau,0) },
\end{array}
\right.
\end{equation*}
where $\mu_1>0$ and $\mu_2$ is a non zero real number.
Under the assumption that $|\mu_2| < \mu_1 $, they obtained an exponential stability result. In 2016, Nicaise {\it et al.} in \cite{Nicaise-Pignotti16} studied the multidimensional wave equation with localized Kelvin-Voigt damping and mixed boundary condition with time delay
\begin{equation}\label{EQ Nicaise 2016}
	\left\{
	\begin{array}{ll}
		{u}_{tt}(x,t) -  \Delta u (x,t) - {\textrm{div}} (a(x) \nabla u_t) = 0, & (x,t)\in  \Omega \times (0,\infty), \nline
		{u}(x,t)=0, & (x,t) \in  \Gamma_0 \times (0,\infty), \nline
		\frac{\partial u}{\partial \nu} (x,t) = -a(x) \frac{\partial u_t}{\partial \nu} (x,t)-\kappa u_t(x,t-\tau), & (x,t) \in  \Gamma_1 \times (0,\infty), \nline
		\left({u}(x,0),{u}_t(x,0)\right)= \left(u_0(x),u_1(x)\right),   & x\in  \Omega,\nline
		\displaystyle{u_t(x,t) = f_0(x,t) },&\displaystyle{(x,t) \in \Gamma_1 \times (-\tau, 0) },
	\end{array}
	\right.
\end{equation}
where $\tau > 0$, $\kappa \in \R$, $a(x) \in L^{\infty}(\Omega)$ and $a(x) \geq a_0 > 0$ on $\omega$ such that $\omega \subset \Omega$ is an open neighborhood of $\Gamma_1$. Under an appropriate geometric condition on $\Gamma_1$ and assuming that $a \in C^{1,1}(\overline{\Omega})$, $\Delta a \in L^{\infty}(\Omega)$, they proved an exponential decay  of the energy of system \eqref{EQ Nicaise 2016}.  In 2019, Anikushyn {\it and al.} in \cite{MR4028788} considered an initial boundary value problem for a viscoelastic wave equation subjected to a strong time localized delay in a Kelvin-Voigt type. The system is given by the following:
\begin{equation*}\label{Eq Anykushyn 2018}
	\left\{
	\begin{array}{ll}
		{u}_{tt}(x,t) -  c_1 \Delta u (x,t) - c_2 \Delta u (x,t - \tau ) - d_1  \Delta u_t (x,t)- d_2 \Delta u_t (x,t - \tau )  = 0, & (x,t)\in  \Omega \times (0,\infty), \nline
		{u}(x,t)=0, & (x,t) \in  \Gamma_0 \times (0,\infty), \nline
		\frac{\partial u}{\partial \nu} (x,t) = 0, & (x,t) \in  \Gamma_1 \times (0,\infty), \nline
		\left({u}(x,0),{u}_t(x,0)\right)= \left(u_0(x),u_1(x)\right),   & x\in  \Omega,\nline
		\displaystyle{u(x,t) = f_0(x,t) },&\displaystyle{(x,t) \in \Omega \times (-\tau, 0) }.
	\end{array}
	\right.
\end{equation*}
Under appropriate conditions on the coefficients, a global exponential decay rate is obtained. In 2015,  Ammari {\it and al.} in \cite{Ammari2015} considered the stabilization problem for an abstract equation with delay and a Kelvin-Voigt damping. The system is given by the following:
\begin{equation*}\label{Abstarct Ammari 2015}
	\left\{
	\begin{array}{ll}
		{u}_{tt}(t) +  a \mathcal{B}\mathcal{B}^* u_t(t) + \mathcal{B}\mathcal{B}^* u(t - \tau),  & t \in   (0,\infty), \nline
		\left({u}(0),{u}_t(0)\right)= \left(u_0,u_1 \right),   \nline
		\displaystyle{\mathcal{B}^* u(t) = f_0(t) },&\displaystyle{t \in  (-\tau,0) }, 
	\end{array}
	\right.
\end{equation*}
for an appropriate class of operator $\mathcal{B}$ and $a > 0.$  Using the frequency domain approach, they obtained an exponential stability result.  \\

Thus, to the best of our knowledge, it seems  to us that there  is no result in the existing literature concerning the case of coupled wave equations with localized Kelvin-Voigt damping and localized time delay, especially in the absence of smoothness of the damping and coupling  coefficients. The goal of the present paper is to fill this gap by studying the stability of system \eqref{sysorig}.\\

The paper is organized as follows: In Section \ref{WPS}, we prove the well-posedness of our system by using semigroup approach. In Section \ref{sec3}, by using a general criteria of Arendt Batty, we show the strong stability of our system in the absence of the compactness of the resolvent. Next, in Section \ref{Pol-Sta-Sec}, by using frequency domain approach  combining with a specific  multiplier method, we prove a polynomial energy decay rate of order $t^{-1}$.

\section{Well-posedness of the System}\label{WPS}
\noindent In this section, we will establish the well-posedness of  system \eqref{sysorig} by using semigroup approach. For this aim, as in \cite{Nicaise2006},
we introduce the following auxiliary change of variable
\begin{equation}
\eta(x,\rho,t):=u_t (x,t-\rho\tau),\quad x\in (0,L),\,\rho\in(0,1),\, t >0.
\end{equation}
Then, system \eqref{sysorig} becomes
\begin{eqnarray}	u_{tt}-\left[au_x +b(x)(\kappa_1 u_{tx}+\kappa_2 \eta_x (x,1,t)) \right]_x  +c(x)y_t =0,& (x,t)\in (0,L)\times (0,\infty) ,&\label{sysorg0}\\ \vspace{0.15cm}
	y_{tt}-y_{xx}-c(x)u_t =0,  &(x,t)\in (0,L)\times (0,\infty) ,&\label{sysorg1}\\\vspace{0.15cm}
	\tau\eta_t (x,\rho,t)+\eta_{\rho}(x,\rho,t)=0, &(x,\rho,t)\in(0,L)\times(0,1)\times (0,\infty) ,&\label{sysorg2}\vspace{0.15cm}
\end{eqnarray}
with the following  boundary conditions 
\begin{equation}\label{diricletbc}
\left\{\begin{array}{lll}
	u(0,t)=u(L,t)=y(0,t)=y(L,t)=0,\quad t>0,\vspace{0.15cm}\\
	\eta(0,\rho,t)=0,\quad (\rho,t)\in (0,1)\times (0,\infty),
	\end{array}
	\right.
	 \end{equation}
	and the following initial conditions
\begin{equation}\label{initialcon}
\left\{\begin{array}{llll}
	u(x,0)=u_0 (x),\qquad u_t (x,0)=u_1 (x), &x\in (0,L),& \vspace{0.15cm}\\
		y(x,0)=y_0 (x),\qquad y_t (x,0)=y_1 (x), &x\in (0,L),&\vspace{0.15cm} \\
		\eta(x,\rho,0) =f_0 (x,-\rho \tau),&(x,\rho)\in (0,L)\times (0,1).& 
\end{array}\right.\end{equation} 
The energy of  system \eqref{sysorg0}-\eqref{initialcon} is given by 
\begin{equation}\label{Energy}
	E(t)=E_1 (t)+E_2 (t)+E_3 (t),
\end{equation}where
\begin{equation*}
E_1 (t)=\frac{1}{2}\int_{0}^{L}\left(\left|u_t \right|^2 +a|u_x |^2 \right)dx,\quad
E_2 (t)=\frac{1}{2}\int_{0}^{L}\left(\left|y_t \right|^2 +|y_x |^2 \right)dx\ \ \text{and}\ \
	E_3 (t)=\frac{ \tau |\kappa_2 |}{2}\intdnb\int_{0}^{1}|\eta_x (\cdot,\rho,t) |^2 d\rho dx. 
	\end{equation*}

\begin{lem}\label{lemenergy}
{\rm	Let  $U=(u,u_t ,y,y_t ,\eta )$ be a regular solution of  system \eqref{sysorg0}-\eqref{initialcon}. Then, the energy $E(t)$  satisfies the following estimation }
	\begin{equation}\label{dewrtt}
	\frac{d}{dt}E(t)\leq - \left(\kappa_1 -\left|\kappa_2\right|\right)\int_{0}^{\beta}|\eta_x (\cdot,0,t)|^2 dx.
	\end{equation}
\end{lem}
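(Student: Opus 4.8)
The plan is the standard energy identity: differentiate $E(t)=E_1(t)+E_2(t)+E_3(t)$ along a regular solution, substitute the three evolution equations \eqref{sysorg0}--\eqref{sysorg2}, integrate by parts in $x$ using the Dirichlet conditions \eqref{diricletbc}, and close the estimate with Young's inequality on the single cross term produced by the delay. Throughout one takes real parts, and one repeatedly uses that $b\equiv\mathds{1}_{(0,\beta)}$, so that every integral carrying the factor $b(x)$ reduces to an integral over $(0,\beta)$, together with the identity $\eta(x,0,t)=u_t(x,t)$, hence $\eta_x(x,0,t)=u_{tx}(x,t)$.

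First I would treat $E_1$ and $E_2$ together. Differentiating, $\frac{d}{dt}E_1(t)=\Re\int_0^L\big(u_{tt}\bar u_t+a u_x\bar u_{tx}\big)\,dx$; inserting $u_{tt}$ from \eqref{sysorg0} and integrating $\big[au_x+b(x)(\kappa_1 u_{tx}+\kappa_2\eta_x(x,1,t))\big]_x\bar u_t$ by parts, the boundary contributions vanish since $u(0,t)=u(L,t)=0$ forces $u_t(0,t)=u_t(L,t)=0$, and one is left with
\[\frac{d}{dt}E_1(t)=-\kappa_1\int_0^\beta|u_{tx}|^2\,dx-\kappa_2\,\Re\int_0^\beta u_{tx}\,\overline{\eta_x(x,1,t)}\,dx-\Re\int_0^L c(x)\,y_t\,\bar u_t\,dx.\]
Likewise, using \eqref{sysorg1} and $y(0,t)=y(L,t)=0$, one gets $\frac{d}{dt}E_2(t)=\Re\int_0^L c(x)\,u_t\,\bar y_t\,dx$, and since $\Re\int_0^L c(x)\,u_t\,\bar y_t\,dx=\Re\int_0^L c(x)\,y_t\,\bar u_t\,dx$ the two coupling integrals cancel in $\frac{d}{dt}(E_1+E_2)$.

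Next I would handle $E_3$: differentiating and using \eqref{sysorg2} in the form $\tau\eta_{xt}=-\eta_{x\rho}$ yields
\[\frac{d}{dt}E_3(t)=\tau|\kappa_2|\,\Re\int_0^\beta\int_0^1\eta_{xt}\,\bar\eta_x\,d\rho\,dx=-\frac{|\kappa_2|}{2}\int_0^\beta\int_0^1\partial_\rho|\eta_x|^2\,d\rho\,dx=-\frac{|\kappa_2|}{2}\int_0^\beta\big(|\eta_x(x,1,t)|^2-|\eta_x(x,0,t)|^2\big)\,dx.\]
Summing the three contributions and replacing $u_{tx}$ by $\eta_x(x,0,t)$, the energy derivative becomes
\[\frac{d}{dt}E(t)=-\kappa_1\int_0^\beta|\eta_x(x,0,t)|^2dx-\kappa_2\,\Re\int_0^\beta\eta_x(x,0,t)\,\overline{\eta_x(x,1,t)}\,dx-\frac{|\kappa_2|}{2}\int_0^\beta|\eta_x(x,1,t)|^2dx+\frac{|\kappa_2|}{2}\int_0^\beta|\eta_x(x,0,t)|^2dx,\]
and bounding the cross term by Young's inequality,
\[\Big|\kappa_2\,\Re\int_0^\beta\eta_x(x,0,t)\,\overline{\eta_x(x,1,t)}\,dx\Big|\le\frac{|\kappa_2|}{2}\int_0^\beta|\eta_x(x,0,t)|^2dx+\frac{|\kappa_2|}{2}\int_0^\beta|\eta_x(x,1,t)|^2dx,\]
makes the $|\eta_x(\cdot,1,t)|^2$ terms cancel and leaves $\frac{d}{dt}E(t)\le-(\kappa_1-|\kappa_2|)\int_0^\beta|\eta_x(x,0,t)|^2dx$, which is \eqref{dewrtt}.

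There is no real analytic obstacle here; the main thing to be careful about is the bookkeeping — checking that every boundary term from the integrations by parts vanishes (this is precisely where the homogeneous Dirichlet conditions are used), using the support of $b$ to localize all damping and delay integrals to $(0,\beta)$, tracking real parts and the coupling cancellation correctly, and keeping the Young estimate sharp so that the final constant is exactly $\kappa_1-|\kappa_2|$. Equivalently, the same identity is the real part of $\langle\mathcal A U,U\rangle$ for the semigroup generator $\mathcal A$ attached to the system, but the direct computation above is self-contained.
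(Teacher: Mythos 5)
Your proposal is correct and follows essentially the same route as the paper: differentiate $E_1,E_2,E_3$ separately, substitute the equations, integrate by parts using the Dirichlet conditions and the support of $b$, observe the cancellation of the coupling terms, identify $u_{tx}=\eta_x(\cdot,0,t)$, and close with Young's inequality to obtain the constant $\kappa_1-|\kappa_2|$. The only cosmetic difference is that you apply Young's inequality after summing the three contributions rather than within the estimate for $E_1'$, which changes nothing.
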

\begin{proof}
First, multiplying  \eqref{sysorg0} by $\overline{u_t }$, integrating over $(0,L)$, using integration by parts with  \eqref{diricletbc}, then using the definition of $b(\cdot)$, $c(\cdot)$ and taking the real part, we obtain
\begin{equation}\label{utbar}
  \frac{d}{dt}E_{1} (t)=-\kappa_1  \intdnb|\eta_x (\cdot,0,t)|^2 dx -\Re\left\{\kappa_2 \intdnb \eta_x (\cdot,1,t)\overline{\eta_x }(\cdot,0,t)dx \right\}  - \Re \left\{c_0\intdnc y_t \overline{u_t }dx \right\}.
\end{equation}
Using Young's inequality in \eqref{utbar}, we get 
	\begin{equation}\label{E1'}
	 \frac{d}{dt}E_{1} (t)\leq- \left(\kappa_1 -\frac{|\kappa_2|}{2}\right) \intdnb|\eta_x (\cdot,0,t)|^2 dx+\frac{ |\kappa_2|}{2}   \intdnb |\eta_x (\cdot,1,t)|^2 dx   - \Re \left\{c_0\intdnc y_t \overline{u_t }dx \right\}.
	\end{equation}
	Now,  multiplying  \eqref{sysorg1} by $\overline{y_t }$, integrating over $(0,L)$, using the definition of $c(\cdot)$, then taking the real part, we get
	\begin{equation}\label{E2'}
	 \frac{d}{dt}E_{2} (t)= \Re\left\{c_0 \intdnc u_t \overline{y_t }dx \right\}.
	\end{equation}Deriving \eqref{sysorg2} with respect to $x$, we obtain
	\begin{equation}\label{d/dx b(x)}
	\tau \eta_{xt}(\cdot,\rho,t)+\eta_{x\rho }(\cdot,\rho,t)=0.
	\end{equation}
	Multiplying \eqref{d/dx b(x)} by $|\kappa_2 |\overline{\eta_x }(\cdot,\rho,t)$, integrating over $(0,\beta)\times (0,1)$, then taking the real part, we get	\begin{equation}\label{E3'}
	 \frac{d}{dt}E_{3} (t)=-\frac{ |\kappa_2 |}{2}\intdnb\left(\left|\eta_x (\cdot,1,t)\right|^2-|\eta_x(\cdot,0,t)|^2 \right) dx.
	\end{equation}
	Finally, by adding \eqref{E1'}, \eqref{E2'} and \eqref{E3'}, we obtain \eqref{dewrtt}. The proof is thus complete.
\end{proof}\\\linebreak
In the sequel, the assumption on $\kappa_1$ and $\kappa_2$ will ensure that 
\begin{equation}\tag{${\rm H}$}\label{H}
\kappa_1>0,\quad \kappa_2\in\R^{\ast}\quad \text{and} \quad \abs{\kappa_2}<\kappa_1.
\end{equation}	
Under the hypothesis \eqref{H} and  from Lemma \ref{lemenergy},  the system \eqref{sysorg0}-\eqref{initialcon} is dissipative in the sense that its energy is non-increasing with respect to time (i.e. $E^{\prime}(t)\leq 0$). Let us define the Hilbert  space $\HH$ by
\begin{equation*}
	\HH:=\left( H^{1}_{0}(0,L)\times L^2 (0,L)\right)^2 \times \mathcal{W}, 
\end{equation*}where \begin{equation*}
 \mathcal{W}:=	L^2 ((0,1);H^1_L (0 ,\beta )) \ \ \text{and}\ \  H^1_L (0,\beta):=\left\{\widetilde{\eta}\in H^1 (0 ,\beta )\ |\ \widetilde{\eta}(0)=0 \right\}. 
 \end{equation*}
 The space 
$\mathcal{W}$ is an Hilbert space of $H^1_L (0 ,\beta )$-valued functions on $(0,1)$, equipped with the following inner product
\begin{equation*}
(\eta^1 ,\eta^2  )_{\mathcal{W}}:=\intdnb\int_{0}^{1}\eta^1_x  \overline{\eta^2_x }d\rho dx, \quad \forall \, \eta^1, \eta^2 \in \mathcal{W}.
\end{equation*}
The Hilbert space $\mathcal{H}$ is equipped with the following inner product 
\begin{equation}\label{p1-norm}
\left(U,U^1 \right)_{\HH}=\intdx \left(au_x \overline{u_{x}^1 }+v\overline{v^1 }+y_x \overline{y_{x}^1 }+z\overline{z^1 }\right)dx + \tau |\kappa_2 |\intdnb\int_{0}^{1}\eta_x (\cdot,\rho) \overline{\eta_{x}^1 } (\cdot,\rho) d\rho dx,
\end{equation}
where  $U=(u,v,y,z,\eta (\cdot,\rho))^{\top} $, $U^1 =(u^1 ,v^1 ,y^1 ,z^1 ,\eta^1 (\cdot,\rho))^{\top}\in\HH$. Now, we define the linear unbounded  operator $\AA:D(\AA)\subset \HH\longmapsto \HH$  by:
\begin{equation}
D(\AA)=\left\{\begin{array}{cc}\vspace{0.25cm}
U=(u,v,y,z,\eta (\cdot,\rho))^{\top}\in \HH \,\,|\,\, y\in H^2 (0,L)\cap H^{1}_0 (0,L),\,\, v,z \in H^{1}_0 (0,L)\\\vspace{0.25cm}
 (S_b (u,v,\eta))_x  \in L^2 (0,L),\quad \eta_{\rho} (\cdot,\rho)\in \mathcal{W},\quad  \eta(\cdot,0)=v(\cdot)
\end{array}\right\} 
\end{equation}and 
\begin{equation}\label{p1-opA}
	\AA\begin{pmatrix}
	u\\v\\y\\z\\\eta (\cdot,\rho)
	\end{pmatrix}=
	\begin{pmatrix} 
	v\\ (S_b (u,v,\eta))_x  -c(\cdot)z\\z\\y_{xx} +c(\cdot)v\\-\tau^{-1}\eta_{\rho}(\cdot,\rho)
	\end{pmatrix},
\end{equation}where $S_b (u,v,\eta):= au_x +b(\cdot)\left(\kappa_1 v_x +\kappa_2 \eta_x (\cdot,1) \right)$. Moreover, from the definition of $b(\cdot)$, we have \begin{equation}S_b (u,v,\eta) =\left\{	\begin{array}{lll}\vspace{0.15cm}
S_1 (u,v,\eta),& x\in (0 ,\beta), &\\
au_x ,&x\in (\beta ,L),&
\end{array}\right.
\end{equation}
where $S_1 (u,v,\eta) :=au_x +\kappa_{1} v_x +\kappa_2 \eta_x (\cdot,1).$ Now, if $U=(u, u_t ,y, y_t ,\eta(\cdot,\rho))^{\top}$, then system \eqref{sysorg0}-\eqref{initialcon} can be written as the following first order evolution equation 
\begin{equation}\label{firstevo}
	U_t =\AA U , \quad U(0)=U_0,
\end{equation}where  $U_0 =(u_0 ,u_1 ,y_0 ,y_1 ,f_0 (\cdot,-\rho\tau) )^{\top}\in \HH$.
\begin{rk}\label{Ainj}
	{\rm
The linear unbounded operator $\AA$ is injective (i.e. 	$\ker(\AA)=\{0\}$). Indeed, if $U\in D(\AA)$ such that $\AA U=0$, then $v=z=\eta_{\rho}(\cdot,\rho)=0$ and since $\eta (\cdot,0)=v(\cdot)$, we get $\eta(\cdot,\rho)=0$. Consequently, $\left(S_b (u,v,\eta)\right)_x =u_{xx}=0$ and $y_{xx}=0$. Finally, since $u(0)=u(L)=y(0)=y(L)=0$, then $u=y=0$. Thus,  $U=(u,v,y,z,\eta(\cdot,\rho))^{\top}=0$. \xqed{$\square$}

}
\end{rk}
\begin{prop}\label{amdissip}{\rm
Under the hypothesis \eqref{H}, the unbounded linear operator $\AA$ is m-dissipative in the energy space $\HH$.} 
\end{prop}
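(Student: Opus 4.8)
The plan is to prove m-dissipativity in two parts: dissipativity, then maximality (surjectivity of $\lambda I - \AA$ for some $\lambda>0$, in fact for $\lambda=1$). For dissipativity, I would take $U=(u,v,y,z,\eta(\cdot,\rho))^{\top}\in D(\AA)$ and compute $\Re(\AA U,U)_{\HH}$ using the inner product \eqref{p1-norm}. The terms $v\overline{v},\ y_x\overline{z_x}$ etc. will produce, after integration by parts and use of the boundary conditions \eqref{diricletbc}, exactly the cancellations already seen in the proof of Lemma \ref{lemenergy}: the coupling terms $c(\cdot)z\overline v$ and $c(\cdot)v\overline z$ cancel, the $au_x\overline v_x$ term cancels against part of the $(S_b)_x$ term, and the transport term $-\tau^{-1}\eta_\rho$ paired with $\tau|\kappa_2|\eta_x$ yields $-\frac{|\kappa_2|}{2}\int_0^\beta(|\eta_x(\cdot,1)|^2-|\eta_x(\cdot,0)|^2)\,dx$ after integrating in $\rho$ and using $\eta(\cdot,0)=v$. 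Collecting everything and applying Young's inequality to the cross term $\Re\{\kappa_2\int_0^\beta \eta_x(\cdot,1)\overline{v_x}\,dx\}$ gives
\begin{equation*}
\Re(\AA U,U)_{\HH}\leq -\left(\kappa_1-|\kappa_2|\right)\int_0^\beta |\eta_x(\cdot,0)|^2\,dx\leq 0,
\end{equation*}
which is $\leq 0$ precisely by hypothesis \eqref{H}; this is the operator-level version of \eqref{dewrtt}.

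For maximality, I would show $I-\AA$ is surjective: given $F=(f_1,f_2,f_3,f_4,f_5(\cdot,\rho))^{\top}\in\HH$, solve $U-\AA U=F$ for $U\in D(\AA)$. Writing out the components: $u-v=f_1$, $v-(S_b(u,v,\eta))_x+c(\cdot)z=f_2$, $y-z=f_3$, $z-y_{xx}-c(\cdot)v=f_4$, and $\eta_\rho+\tau\eta=\tau f_5$. The last ODE in $\rho$ with the boundary condition $\eta(\cdot,0)=v$ can be solved explicitly, $\eta(x,\rho)=v(x)e^{-\tau\rho}+\tau e^{-\tau\rho}\int_0^\rho e^{\tau s}f_5(x,s)\,ds$, so in particular $\eta(\cdot,1)$ is an explicit affine function of $v$. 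Substituting $v=u-f_1$, $z=y-f_3$ and this expression for $\eta(\cdot,1)$ into the second and fourth equations reduces the problem to a coupled elliptic system for $(u,y)\in (H^1_0(0,L))^2$: schematically $u-(au_x+b(\cdot)(\kappa_1+\kappa_2 e^{-\tau})u_x)_x + \text{(lower order)} + c(\cdot)y = \tilde f$ and $y-y_{xx}+\text{(lower order)}-c(\cdot)u=\tilde g$. I would set this up via Lax–Milgram on $(H^1_0(0,L))^2$: the bilinear form $a((u,y),(\phi,\psi))$ obtained by multiplying by test functions and integrating by parts is continuous, and coercivity follows because the leading coefficient $a+b(x)(\kappa_1+\kappa_2 e^{-\tau})$ stays bounded below by a positive constant — here one needs $\kappa_1+\kappa_2 e^{-\tau}>0$, which holds since $|\kappa_2|<\kappa_1$ under \eqref{H} — while the skew-symmetric coupling terms $c(\cdot)y\overline\phi - c(\cdot)u\overline\psi$ contribute nothing to the real part, and the zeroth-order $|u|^2+|y|^2$ terms absorb any residual. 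Once $(u,y)$ is obtained, I recover $v,z,\eta$ and check the regularity required by $D(\AA)$: $y\in H^2$ from the $y$-equation, $(S_b(u,v,\eta))_x\in L^2$ from the $u$-equation, and $\eta_\rho\in\mathcal W$ from the explicit formula, so $U\in D(\AA)$; boundedness of $(I-\AA)^{-1}$ then follows from dissipativity and the closed graph theorem, or directly. By Lumer–Phillips, $\AA$ generates a $C_0$-semigroup of contractions, i.e. $\AA$ is m-dissipative.

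The main obstacle I anticipate is the maximality step, specifically handling the non-smooth, localized coefficient $b(x)$ in the elliptic problem. Because $b$ is only $L^\infty$ (a characteristic function of $(0,\beta)$) and the viscoelastic term involves $u_x$ rather than $u$, the natural variational formulation has leading coefficient $a+b(x)(\kappa_1+\kappa_2 e^{-\tau})$, which is merely bounded measurable; Lax–Milgram still applies since coercivity only needs a positive lower bound, but one must be careful that the resulting weak solution indeed satisfies $(S_b(u,v,\eta))_x\in L^2(0,L)$ (this is the correct interpretation of the second equation, not $u\in H^2$, precisely because of the jump in $b$ at $x=\beta$). A secondary technical point is bookkeeping the $\eta$-substitution cleanly so that the $f_5$-dependent data lands in the right-hand side without disturbing coercivity. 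I expect the coupling terms to be harmless throughout since $c(\cdot)$ enters skew-symmetrically, exactly as in the energy computation of Lemma \ref{lemenergy}.
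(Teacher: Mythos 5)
Your proof is correct, and the dissipativity half is essentially the paper's argument (integration by parts, $\eta(\cdot,0)=v$, Young's inequality, hypothesis \eqref{H} giving $\Re(\AA U,U)_{\HH}\le -(\kappa_1-|\kappa_2|)\int_0^\beta |v_x|^2dx$). For maximality, however, you take a genuinely different route. The paper solves $-\AA U=F$ (equation \eqref{-au=f}): there the first and third components give $v=-f^1$ and $z=-f^3$ \emph{explicitly}, and the explicit formula for $\eta$ involves only data, so after substitution the variational problem \eqref{vf} for $(u,y)$ has the constant-coefficient, decoupled form $a\int u_x\overline{\phi_x}+\int y_x\overline{\psi_x}$, whose coercivity is immediate; all the delay and Kelvin--Voigt terms land in the right-hand side $\mathcal L$. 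The paper then upgrades "$\AA$ is an isomorphism" to "$R(\lambda I-\AA)=\HH$ for small $\lambda>0$" via openness of the resolvent set, and concludes with the Pazy/Lumer--Phillips theorems (this step also uses the injectivity noted in Remark \ref{Ainj}). You instead solve $(I-\AA)U=F$ directly, which forces $v=u-f_1$, $z=y-f_3$ into the elliptic system and produces the discontinuous leading coefficient $a+b(x)(\kappa_1+\kappa_2 e^{-\tau})$ together with the skew coupling in $c(\cdot)$; your observations that $\kappa_1+\kappa_2e^{-\tau}\ge\kappa_1-|\kappa_2|>0$ under \eqref{H} and that the coupling drops out of the real part are exactly what is needed for coercivity, and your reading of the second equation as $(S_b(u,v,\eta))_x\in L^2(0,L)$ rather than $u\in H^2$ is the right one. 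The trade-off: your route removes the need for the injectivity/open-resolvent detour and verifies the Lumer--Phillips range condition at $\lambda=1$ in one shot, at the price of Lax--Milgram with a merely bounded measurable leading coefficient and a (harmlessly) non-symmetric form; the paper's choice of $\lambda=0$ buys a trivial bilinear form but needs the extra soft functional-analytic step to pass from $0\in\rho(\AA)$ to the range condition. Both are complete proofs of Proposition \ref{amdissip}.
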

\begin{proof}
For all $U=(u,v,y,z,\eta(\cdot,\rho))^{\top}\in D(\AA)$, from \eqref{p1-norm} and \eqref{p1-opA}, we have
	$$
\begin{array}{lll}
\displaystyle	\Re(\AA U,U)_{\HH}=\displaystyle\Re\left\{\intdx av_x \overline{u_x}dx\right\}+\Re\left\{\intdx\left(S_b (u,v,\eta)\right)_x \overline{v}dx\right\}+\Re\left\{\intdx z_x \overline{y_x}dx\right\}+\Re\left\{\intdx y_{xx} \overline{z}dx\right\}\vspace{0.25cm}\\ \hspace{2cm}\displaystyle -\, \Re \left\{|\kappa_{2}|\int_{0}^{\beta}\int_{0}^{1}\eta_{x\rho}(\cdot,\rho)\overline{\eta_x}(\cdot,\rho)d\rho dx\right\}.
\end{array}
$$
Using integration by parts to the second and fourth terms in the above equation, then using the definition of $S_b(u,v,\eta)$ and the fact that $U\in D(\AA)$, we get
\begin{equation*}
	\Re (\AA U,U)_{\HH}=-\kappa_1 \intdnb |v_x|^2 dx -\Re\left\{\kappa_2 \intdnb \eta_x (\cdot,1)\overline{v_x}dx \right\}-\frac{|\kappa_2|}{2}\intdnb \int_{0}^{1}\frac{d}{d\rho}|\eta_x (\cdot,\rho)|^2 d\rho dx,
\end{equation*}
the fact that $\eta(\cdot,0)=v(\cdot)$, implies that
	\begin{equation*}\label{reauu}
		\Re \left(\AA U,U\right)_{\HH} =- \left(\kappa_1 -\frac{|\kappa_2|}{2}\right)\intdnb |v_x |^2 dx-\frac{ |\kappa_2 |}{2}\intdnb |\eta_x (\cdot,1)|^2 dx- \Re\left\{\kappa_2\intdnb \eta_x (\cdot,1)\overline{v_x }dx\right\}.
		\end{equation*}
 Using Young's inequality in the above equation and the hypothesis \eqref{H}, we obtain  \begin{equation}\label{dissiparive}
	\Re \left(\AA U,U\right)_{\HH} \leq- \left(\kappa_1 -|\kappa_2|\right)\intdnb |v_x |^2 dx\leq 0,
	\end{equation}
	which implies that $\AA$ is dissipative. Now, let us prove that  $\AA$ is maximal. For this aim, let $F=(f^1 ,f^2 ,f^3 ,f^4 ,f^5 (\cdot,\rho) )^{\top}\in \HH$, we look for $U=(u,v,y,z,\eta (\cdot,\rho))^{\top}\in D(\AA)$ unique solution  of \begin{equation}\label{-au=f}
		-\AA U=F.
	\end{equation}
	Equivalently, we have the following system
	\begin{eqnarray}
		-v&=&f^1 , \label{f1}\\
-(S_b (u,v,\eta))_x +c(\cdot)z &=&f^2 ,\label{f2}\\
		-z&=&f^3 , \label{f3}\\
		-y_{xx}-c(\cdot)v&=&f^4 , \label{f4}\\
		\tau^{-1}\eta_{\rho}(\cdot,\rho)&=&f^5  (\cdot,\rho),\label{f5}
	\end{eqnarray}
	with the following boundary conditions \begin{equation}\label{boundary1mdissip}
		u(0)=u(L)=y(0)=y(L)=0,\quad \eta(0,\rho)=0 \quad \text{and} \quad \eta (\cdot,0)=v(\cdot). 
	\end{equation}
From \eqref{f1}, \eqref{f5} and \eqref{boundary1mdissip}, we get
	\begin{equation}\label{eta}
		 \eta(x,\rho)=\tau\int_{0}^{\rho}f^5 (x,s)ds-f^1 , \quad (x,\rho)\in (0,L )\times (0,1).
	\end{equation}
	Since, $f^1 \in H^{1}_0 (0,L) $ and $f^5 (\cdot,\rho)\in \mathcal{W}$. Then, from \eqref{f5} and \eqref{eta}, we get $\eta_{\rho} (\cdot,\rho), \eta(\cdot,\rho) \in \mathcal{W}$.
	Now, see the definition of $S_b(u,v,\eta)$, substituting \eqref{f1}, \eqref{f3} and \eqref{eta} in \eqref{f2} and \eqref{f4}, we get the following system \begin{eqnarray}
		\left[au_x +b(\cdot)\left(-\kappa_1 f^{1}_x +\tau \kappa_2  \int_{0}^{1}f^{5}_x (\cdot,s)ds-\kappa_{2}f^{1}_x \right)\right]_x +c(\cdot)f^3 &=&-\,f^2, \label{ulax}\\ y_{xx}-c(\cdot)f^1&=&-\,f^4,\label{ylax}\\\vspace{0.25cm}
		u(0)=u(L)=y(0)=y(L)&=&0. \label{bculaxylax}
	\end{eqnarray}
Let $(\phi ,\psi) \in H^{1}_0 (0,L)  \times H^{1}_0 (0,L)$. Multiplying \eqref{ulax} and \eqref{ylax} by $\overline{\phi}$ and $\overline{\psi}$ respectively, integrating over $(0,L)$, then using integrations by parts, we obtain
\begin{equation}\label{ulax1}
	\displaystyle	a\intdx u_x \overline{\phi_x }dx=	\displaystyle \intdx f^2 \overline{\phi}dx+c_0 \intdnc f^3 \overline{\phi}dx+ (\kappa_1 +\kappa_2 )\intdnb f^{1}_x \overline{\phi_x }dx-	\displaystyle  \tau \kappa_2 \intdnb\left( \int_{0}^{1} f^{5}_x (\cdot,s)ds \right) \overline{\phi_x }dx
	\end{equation}
and 
	\begin{equation}\label{ylax2}
		\intdx y_x \overline{\psi_x }dx=\intdx f^4 \overline{\psi}dx-c_0 \intdnc f^1 \overline{\psi}dx.
	\end{equation} 
Adding  \eqref{ulax1} and \eqref{ylax2}, we obtain
	\begin{equation}\label{vf}
		\mathcal{B}((u,y),(\phi,\psi))=\mathcal{L}(\phi,\psi), \quad\forall (\phi,\psi)\in H^{1}_0 (0,L)\times  H^{1}_0 (0,L), 
	\end{equation} 
	where $$\mathcal{B}((u,y),(\phi,\psi))=a\displaystyle\intdx u_x \overline{\phi_x }dx+\intdx y_x \overline{\psi_x }dx$$ and $$ \displaystyle \mathcal{L}(\phi,\psi)=	\displaystyle \intdx \left(f^2 \overline{\phi}+f^4 \overline{\psi}\right)dx+c_0 \intdnc \left(f^3 \overline{\phi}-f^1 \overline{\psi}\right)dx- \tau \kappa_2 \intdnb\left( \int_{0}^{1} f^{5}_x (\cdot,s)ds \right) \overline{\phi_x }dx + 	\displaystyle(\kappa_1 +\kappa_2 )\intdnb f^{1}_x \overline{\phi_x }dx.  $$
	It is easy to see that,  $\mathcal{B}$ is a sesquilinear, continuous and coercive form on $\left( H^{1}_0 (0,L) \times H^{1}_0 (0,L)\right)^2 $ and $\mathcal{L}$ is a linear and continuous form on $ H^{1}_0 (0,L) \times H^{1}_0 (0,L)$. Then, it follows by Lax-Milgram theorem that \eqref{vf} admits a unique solution $(u,y)\in H^{1}_0 (0,L)\times H^{1}_0 (0,L) $. By using the classical elliptic regularity, we deduce that  system \eqref{ulax}-\eqref{bculaxylax} admits a unique solution  $(u,y)\in H^{1}_0 (0,L)\times \left(H^2 (0,L)\cap H^{1}_0 (0,L)\right) $ such that $(S_b (u,v,\eta))_x \in L^2(0,L)$ and since $\ker(\AA)=\{0\}$ (see Remark \ref{Ainj}), we get  $\displaystyle U=\left(u,-f^1,y,-f^3,\tau\int_{0}^{\rho}f^5 (\cdot,s)ds-f^1\right)^{\top} \in D(\AA) $ is a unique solution of \eqref{-au=f}. Then, $\mathcal{A}$ is an isomorphism and since $\rho\left(\mathcal{A}\right)$ is open set of $\mathbb{C}$ (see Theorem 6.7 (Chapter III) in \cite{Kato01}),  we easily get $R(\lambda I -\mathcal{A} ) = {\mathcal{H}}$ for a sufficiently small $\lambda>0 $. This, together with the dissipativeness of $\mathcal{A}$, imply that   $D\left(\mathcal{A}\right)$ is dense in ${\mathcal{H}}$   and that $\mathcal{A}$ is m-dissipative in ${\mathcal{H}}$ (see Theorems 4.5, 4.6 in  \cite{Pazy01}). The proof is thus complete.
\end{proof}\\\linebreak
According to Lumer-Philips theorem (see \cite{Pazy01}), Proposition \ref{amdissip} implies that the operator $\AA$ generates a $C_{0}$-semigroup of contractions $e^{t\AA }$ in $\HH$ which gives the well-posedness of \eqref{firstevo}. Then, we have the following result:
\begin{Thm}{\rm
	Under hypothesis \eqref{H}, for all $U_0 \in \HH$,  System \eqref{firstevo} admits a unique weak solution $$U(x,\rho,t)=e^{t\AA}U_0(x,\rho) \in C^0 (\R^+ ,\HH).
	$$ Moreover, if $U_0 \in D(\AA)$, then the system \eqref{firstevo} admits a unique strong solution $$U(x,\rho,t)=e^{t\AA}U_0(x,\rho) \in C^0 (\R^+ ,D(\AA))\cap C^1 (\R^+ ,\HH).$$}
\end{Thm}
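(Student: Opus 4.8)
The plan is to deduce this theorem directly from Proposition \ref{amdissip} via classical semigroup theory, since all the analytic work has already been done. By Proposition \ref{amdissip}, under hypothesis \eqref{H} the operator $\AA$ is m-dissipative in the Hilbert space $\HH$, and by Remark and the preceding construction its domain $D(\AA)$ is dense in $\HH$. Hence the Lumer--Phillips theorem (see \cite{Pazy01}) applies and guarantees that $\AA$ generates a $C_0$-semigroup of contractions $\left(e^{t\AA}\right)_{t\geq 0}$ on $\HH$.

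Given this, the first assertion follows from the standard theory of $C_0$-semigroups: for every $U_0\in\HH$ the map $t\mapsto U(t):=e^{t\AA}U_0$ belongs to $C^0(\R^+,\HH)$ and is the unique mild (weak) solution of the Cauchy problem \eqref{firstevo}; uniqueness is a consequence of the semigroup property together with the contraction estimate $\|e^{t\AA}\|_{\mathcal{L}(\HH)}\leq 1$. For the second assertion, one invokes the regularity part of the generation theorem (see, e.g., Theorem 1.3 in Chapter 4 of \cite{Pazy01}): if $U_0\in D(\AA)$, then $t\mapsto e^{t\AA}U_0$ maps $\R^+$ into $D(\AA)$, is continuously differentiable with $\frac{d}{dt}e^{t\AA}U_0=\AA e^{t\AA}U_0$, and therefore lies in $C^0(\R^+,D(\AA))\cap C^1(\R^+,\HH)$ and solves \eqref{firstevo} in the strong sense.

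There is essentially no obstacle here: the only thing that needs to be verified to apply Lumer--Phillips is precisely the combination of dissipativity, maximality and density of $D(\AA)$, all of which are contained in Proposition \ref{amdissip}. The one point worth stating explicitly is that the change of variable $\eta(x,\rho,t)=u_t(x,t-\rho\tau)$ together with \eqref{initialcon} makes the abstract Cauchy problem \eqref{firstevo} equivalent to the original system \eqref{sysorig}, so that well-posedness of \eqref{firstevo} yields well-posedness of \eqref{sysorig} in the space $\HH$.
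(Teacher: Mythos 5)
Your proposal is correct and follows exactly the paper's route: the paper also deduces the theorem directly from Proposition \ref{amdissip} via the Lumer--Phillips theorem and the standard generation/regularity results in \cite{Pazy01}, without any additional argument. The extra remarks you make (uniqueness from the contraction property, regularity for $U_0\in D(\AA)$, equivalence with \eqref{sysorig} through the change of variable) are consistent with and merely elaborate the paper's one-line proof.
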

\section{Strong Stability}\label{sec3}\noindent In this section, we will prove the strong stability of  system \eqref{sysorg0}-\eqref{initialcon}. The main result of this section is the following theorem.
\begin{theoreme}\label{p1-strongthm2}
	{\rm	Assume that \eqref{H} is true. Then, the $C_0-$semigroup of contraction $\left(e^{t\AA}\right)_{t\geq 0}$ is strongly stable in $\HH$; i.e., for all $U_{0}\in \HH$, the solution of \eqref{firstevo} satisfies 
		$$
		\lim_{t\rightarrow +\infty}\|e^{t\AA}U_{0}\|_{\HH}=0.
		$$}
\end{theoreme}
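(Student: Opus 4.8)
The plan is to use the classical Arendt--Batty criterion: since $\AA$ generates a bounded $C_0$-semigroup on the Hilbert space $\HH$, the semigroup is strongly stable provided that $\AA$ has no eigenvalues on the imaginary axis and $\sigma(\AA)\cap i\R$ is countable (in fact it will suffice to show $i\R\subset\rho(\AA)$). We already know from Remark \ref{Ainj} that $0\notin\sigma_p(\AA)$ and from the proof of Proposition \ref{amdissip} that $0\in\rho(\AA)$. So the work splits into two propositions: first, $i\lambda I-\AA$ is injective for every $\lambda\in\R^{\ast}$; second, $i\lambda I-\AA$ is surjective for every $\lambda\in\R^{\ast}$ (equivalently, since $\AA$ is closed with compact-free resolvent, that $i\R^{\ast}\cap\sigma(\AA)=\emptyset$).

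\textbf{Step 1: absence of imaginary eigenvalues.} I would take $\lambda\in\R^{\ast}$ and $U=(u,v,y,z,\eta)^{\top}\in D(\AA)$ with $\AA U=i\lambda U$, and show $U=0$. Writing out the five components gives $v=i\lambda u$, $z=i\lambda y$, $\eta_\rho=-i\lambda\tau\eta$, together with the two wave equations. Taking the real part of $(\AA U,U)_{\HH}=i\lambda\|U\|^2$ and invoking the dissipativity estimate \eqref{dissiparive} forces $v_x=0$ on $(0,\beta)$, hence (since $v=i\lambda u$ and $u(0)=0$) $u\equiv 0$ on $(0,\beta)$, and also $\eta_x(\cdot,1)=0$. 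From $\eta_\rho=-i\lambda\tau\eta$ with $\eta(\cdot,0)=v(\cdot)$ we get $\eta(x,\rho)=v(x)e^{-i\lambda\tau\rho}$, so $\eta_x(\cdot,1)=0$ gives $v_x\equiv 0$ on all of $(0,\beta)$ again (consistent), and crucially $\eta\equiv 0$ wherever $v_x=0$. The remaining task is a unique-continuation argument: on $(0,\beta)$ the first equation reduces to $\lambda^2 u + a u_{xx} - c(x)z=0$ with $u\equiv0$, forcing $c(x)z=0$, hence $z\equiv0$ i.e. $y\equiv0$ on $(\alpha,\beta)$; then the second equation propagates $y\equiv 0$ and $u\equiv 0$ across the whole interval $(0,L)$ by solving the resulting ODE system with zero Cauchy data at an interior point and using the Dirichlet conditions at $x=L$. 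This cascading ODE argument on the three subintervals $(0,\alpha)$, $(\alpha,\gamma)$, $(\gamma,L)$ is where one must be careful about the non-smooth coefficients $b,c$ at the interfaces $\beta$ and $\gamma$; continuity of $u$, $au_x+b(\kappa_1 v_x+\kappa_2\eta_x(\cdot,1))$, $y$, $y_x$ across interfaces (which is built into $D(\AA)$) is the key transmission information.

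\textbf{Step 2: surjectivity on the imaginary axis.} For $\lambda\in\R^{\ast}$ and $F=(f^1,\dots,f^5)^{\top}\in\HH$, I would solve $(i\lambda I-\AA)U=F$. As in the proof of Proposition \ref{amdissip}, eliminate $v=i\lambda u-f^1$, $z=i\lambda y-f^3$, and $\eta(x,\rho)=v(x)e^{-i\lambda\tau\rho}+\tau e^{-i\lambda\tau\rho}\int_0^\rho e^{i\lambda\tau s}f^5(x,s)\,ds$, reducing to an elliptic system for $(u,y)$ of the form $-\lambda^2 u-(S_b(u,v,\eta))_x+i\lambda c(\cdot)y=g_1$, $-\lambda^2 y-y_{xx}-i\lambda c(\cdot)u=g_2$ with appropriate right-hand sides in $H^{-1}$. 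One writes this variationally and splits the sesquilinear form as $a_\lambda=a_0-\lambda^2(\cdot,\cdot)_{L^2}+(\text{coupling})$ where $a_0$ is coercive; since the embedding $H^1_0(0,L)\hookrightarrow L^2(0,L)$ is compact, Fredholm alternative applies and existence follows from the injectivity established in Step 1 (or, if one prefers, directly: if $i\lambda I-\AA$ is injective and the resolvent is Fredholm of index zero, it is bijective). Then elliptic regularity gives $y\in H^2$, $(S_b(u,v,\eta))_x\in L^2$, so $U\in D(\AA)$, proving $i\lambda\in\rho(\AA)$.

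\textbf{Main obstacle.} The genuine difficulty is the unique-continuation step inside Step 1: because the damping is effective only on $(0,\beta)$ (for $u$) and the coupling only on $(\alpha,\gamma)$ (for $y$), one must carefully chase zero Cauchy data through the three regions across the non-smooth interfaces, making repeated use of the transmission regularity encoded in $D(\AA)$; in particular on $(\gamma,L)$ where neither damping nor coupling acts, $u$ and $y$ solve decoupled homogeneous Helmholtz ODEs and one must use the boundary conditions at $x=L$ together with matched data at $x=\gamma$ to kill them. Once $i\R^{\ast}\cap\sigma(\AA)=\emptyset$ is in hand, $0\in\rho(\AA)$ from Proposition \ref{amdissip} completes $i\R\subset\rho(\AA)$, and Arendt--Batty yields the claimed strong stability.
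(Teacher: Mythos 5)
Your overall strategy is sound, but it is genuinely different from the paper's. The paper never argues through eigenvalues plus a Fredholm alternative: it proves $i\R\subset\rho(\AA)$ (Proposition \ref{pol-prop1}) by the Liu--Zheng contradiction scheme recalled in Remark \ref{App-Lemma-A.3}, taking a normalized sequence $U^n$ with $(i\la^n I-\AA)U^n\to 0$, $\la^n\to\omega\neq 0$, and killing it through a chain of multiplier estimates (dissipativity to control $v^n_x$ and $\eta^n_x$ on $(0,\beta)$, the cut-off function $g$ to control traces at $\alpha,\beta$, the matrix-exponential ODE propagation on $(\beta,\gamma)$, and the cut-offs $\chi_1,\chi_2$ for $(0,\alpha)$ and $(\gamma,L)$), reaching a contradiction with $\|U^n\|_{\HH}=1$; Arendt--Batty (Theorem \ref{App-Theorem-A.2}) then concludes. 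Your route instead splits into (i) absence of imaginary eigenvalues by unique continuation (dissipativity gives $v_x=0$ on $(0,\beta)$, hence $u=v=\eta=0$ there, then $c_0 z=0$ forces $y=0$ on $(\alpha,\beta)$, and zero Cauchy data is propagated across $\beta$ and $\gamma$ using the continuity of $S_b$ built into $D(\AA)$ -- this part is correct, and you do not even need the boundary condition at $x=L$), and (ii) surjectivity of $i\la I-\AA$ via elimination of $v,z,\eta$ and a Fredholm argument on the reduced $(u,y)$ system, using the compact embedding $H^1_0(0,L)\hookrightarrow L^2(0,L)$. This is a legitimate and in fact shorter qualitative argument; what the paper's sequence approach buys is that the same multiplier machinery is reused quantitatively in Section \ref{Pol-Sta-Sec} for the resolvent bound giving the $t^{-1}$ decay, which your argument does not provide.

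One step in your Step 2 is stated too casually and, as written, would fail for some $\la$: after elimination the principal coefficient in the first equation is $\sigma_\la(x)=a+i\la\, b(x)\left(\kappa_1+\kappa_2 e^{-i\la\tau}\right)$, whose real part $a+\la\kappa_2\sin(\la\tau)b(x)$ can be negative for suitable $\la$, so the principal form ``$a_0$'' is not coercive in the naive sense. The claim is repairable precisely thanks to \eqref{H}: $\left|\Im\,\sigma_\la\right|=|\la|\left(\kappa_1+\kappa_2\cos(\la\tau)\right)\geq|\la|\left(\kappa_1-|\kappa_2|\right)>0$ with a fixed sign, so $0$ does not lie in the convex hull of $\{\sigma_\la,a,1\}$, and multiplying the form by a suitable unimodular constant restores coercivity (Lax--Milgram after rotation); the lower-order terms $-\la^2(\cdot,\cdot)_{L^2}$ and the $i\la c(\cdot)$ coupling are then compact perturbations, so the operator is Fredholm of index zero and your Step 1 gives bijectivity, with boundedness of the inverse from the closed graph theorem. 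With that repair your proof is complete; without it, the Fredholm step is not justified as stated.
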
\noindent According to Theorem \ref{App-Theorem-A.2}, to prove Theorem \ref{p1-strongthm2}, we need to prove that the operator $\AA$ has no pure imaginary eigenvalues and $\sigma(\AA)\cap i\R $ is countable. The proof of Theorem \ref{p1-strongthm2} will be achieved from the following proposition.
\begin{prop}\label{pol-prop1}{\rm
		Under the hypothesis \eqref{H}, we have 
		\begin{equation}\label{Condition-1-pol}
		i\R\subset \rho(\mathcal{A}).
		\end{equation}}
\end{prop}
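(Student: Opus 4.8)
The strategy is the standard frequency-domain argument: since $\AA$ is m-dissipative (Proposition \ref{amdissip}), $0\in\rho(\AA)$ by Remark \ref{Ainj} and the construction in that proof, so it suffices to show $i\lambda\in\rho(\AA)$ for every $\lambda\in\R^{\ast}$. Because $\AA^{-1}$ is already known to exist and map into $D(\AA)$, and since the embedding of $D(\AA)$ into $\HH$ is \emph{not} compact (the $\eta$-component lives in $L^2((0,1);H^1_L(0,\beta))$, which costs us the usual compactness), we cannot simply invoke analytic Fredholm theory. Instead I would argue by contradiction: suppose there is $\lambda\in\R^{\ast}$ and a sequence $U_n=(u_n,v_n,y_n,z_n,\eta_n)\in D(\AA)$ with $\|U_n\|_{\HH}=1$ and $(i\lambda I-\AA)U_n\to 0$ in $\HH$; then I would derive $\|U_n\|_{\HH}\to 0$, a contradiction. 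Concretely one expands $(i\lambda I-\AA)U_n=F_n=(f^1_n,\dots,f^5_n)\to 0$ into the five scalar equations
\begin{equation*}
i\lambda u_n-v_n=f^1_n,\quad i\lambda v_n-(S_b(u_n,v_n,\eta_n))_x+c(\cdot)z_n=f^2_n,\quad i\lambda y_n-z_n=f^3_n,
\end{equation*}
\begin{equation*}
i\lambda z_n-y_{n,xx}-c(\cdot)v_n=f^4_n,\qquad i\lambda\tau\eta_n+\eta_{n,\rho}=\tau f^5_n.
\end{equation*}

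The first and decisive step is the dissipation identity: taking the real part of $(i\lambda U_n-\AA U_n,U_n)_{\HH}=(F_n,U_n)_{\HH}$ and using \eqref{dissiparive} gives $(\kappa_1-|\kappa_2|)\int_0^\beta|v_{n,x}|^2\,dx\le -\Re(\AA U_n,U_n)_{\HH}=\Re(F_n,U_n)_{\HH}-\,o(1)\to 0$, hence $v_{n,x}\to 0$ in $L^2(0,\beta)$, and from the boundary-term bookkeeping in that identity also $\eta_{n,x}(\cdot,1)\to 0$ in $L^2(0,\beta)$. Then the transport equation \eqref{f5}-type relation, solved explicitly as $\eta_n(x,\rho)=e^{-i\lambda\tau\rho}v_n(x)+\tau\int_0^\rho e^{i\lambda\tau(s-\rho)}f^5_n(x,s)\,ds$ after differentiating in $x$, lets me propagate the decay of $v_{n,x}$ and $\eta_{n,x}(\cdot,1)$ to $\eta_{n,x}(\cdot,\rho)\to 0$ in $\mathcal W$; in particular $v_n\to 0$ in $H^1(0,\beta)$. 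The genuinely hard part is then to push this information out of the damped/delay region $(0,\beta)$ into the whole interval $(0,L)$ and onto the second equation $y_n$: this requires a careful unique-continuation / multiplier argument on the ODE system for $u_n$ and $y_n$ on the various subintervals $(0,\alpha)$, $(\alpha,\beta)$, $(\beta,\gamma)$, $(\gamma,L)$, using the coupling coefficient $c(\cdot)$ supported on $(\alpha,\gamma)$ to transfer estimates between $u_n$ and $y_n$, and handling the non-smooth interfaces at $x=\beta$ where $b$ jumps (so $S_b$ is continuous but $u_{n,x}$ need not be).

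In detail, on $(\beta,L)$ the equations decouple into $-au_{n,xx}+i\lambda v_n+c(\cdot)z_n\approx 0$ with $v_n=i\lambda u_n-f^1_n$, i.e. a Helmholtz-type equation $au_{n,xx}+\lambda^2 u_n\approx$ (lower order), and similarly for $y_n$; combined with the already-established smallness of the Cauchy data of $u_n$ at $x=\beta$ (from $v_n,v_{n,x}\to0$ there, $S_b$-continuity, and the $H^2$-estimate on $y_n$) one shows $u_n\to 0$ in $H^1(\beta,L)$ and $y_n\to 0$ in $H^1(\gamma,L)$ by an ODE energy estimate / Gronwall argument, then works backward through $(\alpha,\gamma)$ using the coupling to control the cross terms, and finally through $(0,\alpha)$. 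Once $u_n,v_n\to0$ in $H^1_0(0,L)\times L^2(0,L)$ and $y_n,z_n\to0$ likewise and $\eta_n\to0$ in $\mathcal W$, we get $\|U_n\|_{\HH}\to0$, contradicting $\|U_n\|_{\HH}=1$; hence $i\R\subset\rho(\AA)$. I expect the bookkeeping at the interface $x=\beta$ and the propagation of estimates through the coupling region to be where essentially all the work lies; the abstract contradiction scheme and the dissipation identity are routine.
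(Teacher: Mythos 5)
Your overall scheme (contradiction via an approximate-eigenvector sequence, the dissipation identity giving $v_{n,x}\to 0$ on $(0,\beta)$, the explicit solution of the transport equation to control $\eta_{n,x}$, and then propagation to the rest of $(0,L)$) is the same as the paper's, but the decisive middle step is missing and is justified incorrectly where you do mention it. First, a small misattribution: the dissipation inequality \eqref{dissiparive} does \emph{not} yield $\eta_{n,x}(\cdot,1)\to 0$ -- after Young's inequality the $\eta_x(\cdot,1)$ contributions cancel and only $\int_0^\beta|v_{n,x}|^2$ is controlled; that limit instead follows (as the paper does it) from the representation $\eta_{n,x}(\cdot,1)=v_{n,x}e^{-i\la^n\tau}+\tau\int_0^1e^{i\la^n\tau(s-1)}f^{5,n}_x(\cdot,s)\,ds$, which you also invoke, so this is repairable. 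The genuine gap is your treatment of the $y$-component. You assert that the smallness of the Cauchy data of $y_n$ at $x=\beta$ is ``already established \dots from \dots the $H^2$-estimate on $y_n$'': it is not. The $y$-equation carries no damping, and nothing proved up to that point says anything about $y_{n,x}$ or $z_n$ anywhere. In the paper this is exactly the content of Lemmas 3.3--3.4: a multiplier $g$ with $g(\alpha)=-g(\beta)=1$ gives trace bounds for $z_n$, $y_{n,x}$, $v_n$ and $S_1$ at $\alpha,\beta$; then, multiplying the $u$-equation by $\overline{z_n}$ on the overlap $(\alpha,\beta)$ of the damping and coupling regions and using $v_n,S_1\to 0$ there, one obtains $\int_\alpha^\beta|z_n|^2\to 0$, and subsequently $\int_\alpha^\beta|y_{n,x}|^2\to 0$ and the vanishing of the traces at $\beta$. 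Without this coupling-on-the-overlap argument there is no mechanism forcing the undamped $y$-system to vanish, and your ``careful unique-continuation / multiplier argument'' is a placeholder for precisely the nontrivial part of the proof.

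Second, your claim that ``on $(\beta,L)$ the equations decouple'' is false on $(\beta,\gamma)$, where $c\equiv c_0$ and $b\equiv 0$: the system there is still coupled, and the paper handles it by rewriting \eqref{eq1ss}--\eqref{eq4ss} as a first-order system $V^n_x=B^nV^n+G^n$, $V^n=(u^n,u^n_x,y^n,y^n_x)^\top$, and using boundedness of $e^{B^n(x-\beta)}$ together with $V^n(\beta^+)\to 0$ -- which again requires the trace limits for $y^n,y^n_x$ at $\beta$ from the previous step, as well as $|(S_1)(\beta^-)|=a|u^n_x(\beta^+)|$ at the non-smooth interface. The final regions $(0,\alpha)$ and $(\gamma,L)$ are then handled with the cut-off multipliers $x\chi_1$ and $(x-L)\chi_2$ (Lemmas 3.6--3.7), not ``backward through $(\alpha,\gamma)$'' as you order it. A last, minor point: the negation of \eqref{Condition-1-pol} gives $i\omega\in\sigma(\AA)$, not directly an approximate-eigenvector sequence at a fixed $\la$; you should either invoke that boundary points of the spectrum are approximate eigenvalues or, as the paper does via Remark \ref{App-Lemma-A.3}, work with a sequence $\la^n\to\omega$, $|\la^n|<|\omega|$.
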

\noindent  We will prove Proposition  \ref{pol-prop1} by contradiction argument. Remark that, it has been proved in Proposition \ref{amdissip} that $0\in \rho(\mathcal{A})$. Now, suppose that \eqref{Condition-1-pol} is false, then there exists $\omega\in \R^{\ast}$ such that $i\omega\notin \rho(\mathcal{A})$.  According to Remark \ref{App-Lemma-A.3}, let $\left\{(\la^n,U^n:=(u^n,v^n,y^n,z^n,\eta^n(\cdot,\rho))^{\top})\right\}_{n\geq 1}\subset \R^{\ast}\times D(\mathcal{A})$, with
\begin{equation}\label{contra1}
\la^n\to \omega\ \text{as}\ n\to \infty\ \ \text{and}\ \  \abs{\la^n}<\abs{\omega}\end{equation}
and 
\begin{equation}\label{contra2}
\|U^n\|_{\mathcal{H}}=\left\|(u^n ,v^n ,y^n ,z^n ,\eta^n (\cdot, \rho))^{\top}\right\|_{\HH}=1,
\end{equation}
such that
\begin{equation}\label{eq0}
(i\la^n I -\AA ) U^n =F^n:=(f^{1,n},f^{2,n},f^{3,n},f^{4,n},f^{5,n}(\cdot,\rho))^{\top}  \to 0  \quad \text{in}\quad \HH.
\end{equation}
Equivalently, we have 
\begin{eqnarray}
i\la^n u^n -v^n &=& f^{1,n} \to 0 \quad\qquad \text{in}\quad H^{1}_0 (0,L),\label{eq1ss}\\
i\la^n v^n -\left(S_b (u^n,v^n,\eta^n) \right)_x   +c(\cdot)z^n &=&f^{2,n} \to 0 \quad\qquad \text{in}\quad  L^2 (0,L),\label{eq2ss}\\
i\la^n y^n -z^n &=& f^{3,n} \to 0 \quad\qquad \text{in}\quad H^{1}_0 (0,L),\label{eq3ss}\\
i\la^n z^n -y_{xx}^n -c(\cdot)v^n&=&f^{4,n}\to 0 \quad\qquad \text{in}\quad L^2  (0,L),\label{eq4ss}\\i\la^n \eta^n (.,\rho) +\tau^{-1}\eta_{\rho}^n (\cdot,\rho) &=&f^{5,n}(\cdot,\rho)\to 0 \quad \text{in}\quad \mathcal{W}.\label{eq5ss}
\end{eqnarray}
Then, we will proof condition \eqref{Condition-1-pol} by finding a contradiction with \eqref{contra2} such as $\|U^n\|_{\mathcal{H}}\to 0$. The proof of proposition \ref{pol-prop1} has been divided into several Lemmas.
\begin{lem}\label{firstlemmass} {\rm Under the  hypothesis \eqref{H}, the solution $U^n =(u^n ,v^n ,y^n ,z^n ,\eta^n (\cdot,\rho))^{\top}\in D(\AA )$ of system \eqref{eq1ss}-\eqref{eq5ss} satisfies the following limits}
	\begin{eqnarray}
	\lim_{n \to \infty} \intdnb |v^{n}_x |^2 dx =0, \label{limvnx}\\
	\lim_{n \to \infty} \intdnb |v^n |^2 dx =0 ,\label{limvn}\\
	\lim_{n \to \infty} \intdnb |u^{n}_x |^2 dx =0, \label{limunx}\\
	\lim_{n \to \infty} \intdnb \int_{0}^{1} |\eta^{n}_x (\cdot,\rho) |^2 d\rho dx =0, \label{limetan}\\
	\lim_{n \to \infty} \intdnb  |\eta^{n}_x (\cdot,1)|^2  dx =0,\label{limeta1}\\
	\lim_{n \to \infty} \intdnb |S_1(u^n,v^n,\eta^n) |^2 dx =0.\label{limbig}
	\end{eqnarray}
\end{lem}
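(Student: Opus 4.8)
The plan is to read off all six limits from a single scalar identity — the one obtained by testing the resolvent equation \eqref{eq0} against $U^n$ — together with the closed-form solution of the transport equation \eqref{eq5ss}. First I would take the $\HH$-inner product of \eqref{eq0} with $U^n$ and pass to real parts: since $\Re(i\la^n\|U^n\|_{\HH}^2)=0$, $\|U^n\|_{\HH}=1$ and $F^n\to 0$ in $\HH$, this gives $-\Re(\AA U^n,U^n)_{\HH}=\Re(F^n,U^n)_{\HH}\to 0$. Substituting the identity for $\Re(\AA U^n,U^n)_{\HH}$ derived in the proof of Proposition~\ref{amdissip},
\[
-\Re(\AA U^n,U^n)_{\HH}=\Big(\kappa_1-\tfrac{|\kappa_2|}{2}\Big)\intdnb|v^n_x|^2\,dx+\frac{|\kappa_2|}{2}\intdnb|\eta^n_x(\cdot,1)|^2\,dx+\Re\Big\{\kappa_2\intdnb\eta^n_x(\cdot,1)\overline{v^n_x}\,dx\Big\},
\]
the Young's-inequality step leading to \eqref{dissiparive} gives $(\kappa_1-|\kappa_2|)\intdnb|v^n_x|^2dx\le-\Re(\AA U^n,U^n)_{\HH}\to0$, which is \eqref{limvnx} by \eqref{H}. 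With \eqref{limvnx} in hand, I would re-insert it into the displayed identity and treat $\|\eta^n_x(\cdot,1)\|_{L^2(0,\beta)}$ as the unknown of a quadratic inequality, bounding the cross term by Cauchy--Schwarz; since $\kappa_1-\tfrac{|\kappa_2|}{2}>0$ under \eqref{H}, this forces \eqref{limeta1}.

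The remaining limits are then elementary. From $v^n(0)=0$ one has $|v^n(x)|^2\le\beta\intdnb|v^n_x|^2dx$ on $(0,\beta)$, so integrating gives \eqref{limvn} out of \eqref{limvnx}. From \eqref{eq1ss} one writes $u^n_x=(i\la^n)^{-1}(v^n_x+f^{1,n}_x)$, and since $\la^n\to\omega\neq0$, $f^{1,n}\to0$ in $H^1_0(0,L)$ and $v^n_x\to0$ in $L^2(0,\beta)$, this gives \eqref{limunx}. For \eqref{limetan} I would solve \eqref{eq5ss} as a linear ODE in $\rho$ with the compatibility condition $\eta^n(\cdot,0)=v^n(\cdot)$, obtaining
\[
\eta^n_x(x,\rho)=v^n_x(x)\,e^{-i\la^n\tau\rho}+\tau\int_0^\rho e^{-i\la^n\tau(\rho-s)}f^{5,n}_x(x,s)\,ds;
\]
since the exponentials have modulus one, $(a+b)^2\le2a^2+2b^2$ and Cauchy--Schwarz in $s$ yield $\intdnb\int_0^1|\eta^n_x(\cdot,\rho)|^2d\rho dx\le2\intdnb|v^n_x|^2dx+2\tau^2\|f^{5,n}\|_{\mathcal W}^2\to0$ by \eqref{limvnx} and \eqref{eq5ss}. (Evaluating the same formula at $\rho=1$ is an alternative route to \eqref{limeta1}.) Finally \eqref{limbig} follows from the triangle inequality in $L^2(0,\beta)$ applied to $S_1(u^n,v^n,\eta^n)=au^n_x+\kappa_1v^n_x+\kappa_2\eta^n_x(\cdot,1)$, using \eqref{limunx}, \eqref{limvnx} and \eqref{limeta1}.

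The only point that is not completely mechanical is the control of the trace $\eta^n_x(\cdot,1)$: it is not dominated by the $\mathcal W$-norm, hence not by $\|U^n\|_{\HH}=1$ alone, so it must be produced either by absorbing the cross term in the dissipation identity once \eqref{limvnx} is available, or by evaluating the closed-form transport solution at $\rho=1$. Everything else is a direct consequence of $F^n\to0$ in $\HH$, the normalisation $\|U^n\|_{\HH}=1$ and $\la^n\to\omega\neq0$.
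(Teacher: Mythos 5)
Your proof is correct and follows essentially the same route as the paper: the dissipation estimate \eqref{dissiparive} tested against $U^n$ yields \eqref{limvnx}, a Poincar\'e-type bound on $(0,\beta)$ yields \eqref{limvn}, equation \eqref{eq1ss} together with $\la^n\to\omega\neq0$ yields \eqref{limunx}, the explicit transport solution with $\eta^n(\cdot,0)=v^n(\cdot)$ yields \eqref{limetan} and \eqref{limeta1}, and the triangle inequality yields \eqref{limbig}. The only variation is your primary derivation of \eqref{limeta1} by absorbing the cross term in the exact dissipation identity, which is a valid quadratic argument (writing $X_n=\|\eta^n_x(\cdot,1)\|_{L^2(0,\beta)}$, $a_n=\|v^n_x\|_{L^2(0,\beta)}$ and $\epsilon_n=-\Re(\AA U^n,U^n)_{\HH}\to0$, one gets $(X_n-a_n)^2\le a_n^2+2\epsilon_n/|\kappa_2|$, so $X_n\to0$ because the coefficient $|\kappa_2|/2$ of $X_n^2$ is positive), whereas the paper evaluates the transport formula at $\rho=1$ — the route you correctly identify as the alternative.
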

\begin{proof}
	First, taking the inner product of \eqref{eq0} with $U^n$ in $\HH$ and  using \eqref{dissiparive} with the help of hypothesis \eqref{H}, we obtain
	\begin{equation}\label{3.13}\intdnb |v^{n}_x |^2 dx \leq -\frac{1}{\kappa_1 -|\kappa_2 |}\Re (\AA U^n  ,U^n )_{\HH}=\frac{1}{\kappa_1 -|\kappa_2 |}\Re (F^n  ,U^n )_{\HH} \leq \frac{1}{\kappa_1-|\kappa_2 |}\|F^n \|_{\HH} \|U^n \|_{\HH}.  \end{equation} Then, by passing to the limit in \eqref{3.13} and by using the fact that  $\|U^n \|_{\HH}=1 $ and $\|F^n \|_{\HH} \to 0 $, we obtain \eqref{limvnx}.
	Now, since $v^n \in H^1_0 (0,L)$, then it follows  from Poincar\'e inequality that there exists a constant $C_p >0 $ such that 
	\begin{equation}\label{gpoincare}
	\|v^n \|_{L^2 (0 ,\beta )}\leq C_p \|v^{n}_x \|_{L^2 (0 ,\beta)}.
	\end{equation}
	Thus, From \eqref{limvnx} and \eqref{gpoincare}, we obtain \eqref{limvn}. Next, 
	from   \eqref{eq1ss} and  the fact that  $\displaystyle\intdnb |f^{1,n}_x |^2 dx \leq \intdx |f^{1,n}_x |^2 dx  \leq a^{-1} \|F^n \|_{\HH}^2 $, we deduce that 
	\begin{equation}\label{3.16}	
	\displaystyle \intdnb |u^{n}_x |^2 dx \leq\displaystyle \frac{2}{(\la^n )^2 } \intdnb |v^{n}_x |^2 dx +\frac{2}{(\la^n )^2 }\intdnb |f^{1,n}_x |^2 dx
	\leq  \displaystyle \frac{2}{(\la^n )^2 } \intdnb |v^{n}_x |^2 dx +\frac{2}{a(\la^n )^2 } \|F^n \|_{\HH}^2 .
	\end{equation}
	Therefore, by passing to the limit in  \eqref{3.16} and by using \eqref{contra1}, \eqref{limvnx} and the fact that $\|F^n \|_{\HH}\to0$, we obtain \eqref{limunx}.
	Moreover, from  \eqref{eq5ss} and the fact that  $\eta^n (\cdot,0)=v^n (\cdot)$, we deduce that 
	\begin{equation}\label{etan}
	\eta^n (x,\rho)=v^n e^{-i\la^n \tau \rho}+\tau\int_{0}^{\rho}e^{i\la^n \tau (s-\rho)}f^{5,n}(x,s)ds,\quad \quad (x,\rho)\in(0,L )\times (0,1).
	\end{equation}
	From  \eqref{etan} and the fact that $\displaystyle\intdnb \int_{0}^{1} |f^{5,n}_x (\cdot,s)|^2 ds dx \leq \tau^{-1} |\kappa_{2}|^{-1}\|F^n \|_{\HH}^2 $, we obtain \begin{equation}\label{3.18}\begin{array}{llll}
	\displaystyle\intdnb \int_{0}^{1} |\eta^{n}_x (\cdot,\rho)|^2 d\rho dx  \leq \displaystyle 2\intdnb |v^{n}_x   |^2 dx +2\tau^2 \intdnb \int_{0}^{\rho} \int_{0}^{1}\rho |f^{5,n}_x (\cdot,s)|^2 d\rho ds dx\vspace{0.25cm}\\
\hspace{3.5cm}	\leq\displaystyle 2\intdnb |v^{n}_x   |^2 dx +\tau^2 \intdnb \int_{0}^{1}  |f^{5,n}_x (\cdot,s)|^2  ds dx \vspace{0.25cm}\\\hspace{3.5cm}
	\leq \displaystyle 2\intdnb |v^{n}_x   |^2 dx +\tau|\kappa_2|^{-1}\|F^n \|_{\HH}^2 .
	\end{array}\end{equation}Thus, by passing to the limit in \eqref{3.18} and by using  \eqref{limvnx} with the fact that $\|F^n \|_{\HH}\to 0$, we obtain \eqref{limetan}.
	On the other hand,	from  \eqref{etan}, we have \begin{equation*}
	\eta^{n}_{x} (\cdot,1)=v^{n}_{x}e^{-i\la^n \tau}+\tau \int_{0}^{1}e^{i\la^n \tau (s-1)}f^{5,n}_{x}(\cdot,s)ds, 
	\end{equation*}
	consequently,  by using the same argument as proof of \eqref{limetan}, we obtain \eqref{limeta1}. 
	Next, it is clear to see that 
 \begin{equation*}\label{itsclear}
		\intdnb |S_1 (u^n,v^n,\eta^n) |^2 dx =\intdnb |au^{n}_x +\kappa_{1}v^{n}_x +\kappa_{2} \eta^{n}_x (\cdot ,1 )|^2 dx  \leq 3a^2 \intdnb|u^n_{x}|^2 dx +3\kappa_{1}^2 \intdnb |v^{n}_{x}|^2 dx +3\kappa_{2}^2 \intdnb |\eta^{n}_{x}(\cdot,1)|^2 dx.
		\end{equation*}
	Finally, passing to the limit in the above estimation, then using  \eqref{limvnx}, \eqref{limunx} and \eqref{limeta1}, we obtain \eqref{limbig}. The proof is thus complete.
	
\end{proof}\\\linebreak
Now we fix a  function $g \in C^1 \left([\alpha ,\beta ]\right)$ such that
\begin{equation}
g(\alpha)=-g(\beta)=1\quad \text{and \ set} \quad \max_{x\in [\alpha ,\beta ]}|g(x)|=M_{g} \,\,\, \text{and}\,\, \max_{x\in [\alpha ,\beta ]}|g^{\prime}(x)|=M_{g^{\prime}}. 
\end{equation}
\begin{rk}{\rm
		To prove the existence of a function $g$, we need to find an example. For this aim,  we can take \\ $\displaystyle g(x)=1+\frac{2(\alpha -x )}{\beta -\alpha }$, then  $g\in C^1 ([\alpha ,\beta])$,  $\displaystyle g(\alpha )=-g(\beta )=1$,  $\displaystyle M_g =1$ and  $\displaystyle M_{g^{\prime}}=\frac{2}{\beta -\alpha }$.
		Also, we can take $\displaystyle g(x)=\cos\left(\frac{(\alpha -x)\pi}{\alpha -\beta } \right).$}
	\xqed{$\square$}
\end{rk}
\begin{lem}{\rm Under the  hypothesis \eqref{H}, the solution  $U^n =(u^n ,v^n ,y^n ,z^n ,\eta^n (\cdot,\rho) )^{\top}\in D(\AA )$ of  system \eqref{eq1ss}-\eqref{eq5ss} satisfies the following inequalities
		\begin{eqnarray}
		|z^n (\beta)|^2 +|z^n (\alpha)|^2 \leq M_{g^{\prime}} \int_{\alpha }^{\beta }|z^n |^2 dx +2|\la^n | M_{g} \left(\int_{\alpha}^{\beta}|z^n |^2 dx\right)^{\frac{1}{2}}+2M_{g}\|F^n \|_{\HH}\label{zboundary},\\
		|y^{n}_{x}(\beta)|^2 +|y^{n}_{x}(\alpha)|^2 \leq M_{g^{\prime}}\int_{\alpha}^{\beta} |y^{n}_{x}|^2 dx +2(|\la^n | +c_0 )M_{g}\left(\int_{\alpha}^{\beta} |y^{n}_{x}|^2 dx \right)^{\frac{1}{2}}+2M_{g}\|F^n \|_{\HH}\label{ynxboundary}
		\end{eqnarray}and the following limits
		\begin{equation}\label{limvnboundary}
		\lim_{n\to \infty} \left(\left|v^n (\beta )\right|^2 +\left|v^n (\alpha )\right|^2  \right) =0,
		\end{equation}
		\begin{equation}\label{limbigbig}
		\lim_{n\to \infty}\left(\left|\left(S_{1} (u^n,v^n,\eta^n)\right) (\beta^- )\right|^2 + \left|\left(S_{1} (u^n,v^n,\eta^n)\right) (\alpha )\right|^2 \right ) =0.
		\end{equation} }
	
	
\end{lem}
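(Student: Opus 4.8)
The plan is to prove all four assertions by a single multiplier device: on the interval $(\alpha,\beta)$ we multiply the component equation governing the quantity whose endpoint values we want to control by $g(\cdot)$ times its own conjugate and integrate by parts, that is, we apply the fundamental theorem of calculus to $x\mapsto g(x)|\,\cdot\,|^2$. Since $g(\alpha)=-g(\beta)=1$, the boundary term is exactly $\int_{\alpha}^{\beta}\frac{d}{dx}\big(g|\,\cdot\,|^2\big)\,dx=-\big(|\,\cdot(\beta)|^2+|\,\cdot(\alpha)|^2\big)$, which puts the desired sum of squared traces on the left-hand side; the product rule splits the right-hand side into a $g^{\prime}|\,\cdot\,|^2$ piece (bounded by $M_{g^{\prime}}\int_{\alpha}^{\beta}|\,\cdot\,|^2$) and a cross term $2\Re\int_{\alpha}^{\beta}g\,\overline{(\cdot)}\,(\cdot)_x$, into which we substitute the $x$-derivative supplied by the resolvent system \eqref{eq1ss}--\eqref{eq5ss}. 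All traces involved are legitimate because $U^n\in D(\AA)$ (in particular $y^n\in H^2(0,L)$ and $S_b(u^n,v^n,\eta^n)\in H^1(0,L)$, so $S_1$ has well-defined one-sided traces at $\alpha$ and $\beta$).

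For \eqref{zboundary} I would take the multiplier $g\,\overline{z^n}$ and replace $z^n_x$ by $i\la^n y^n_x-f^{3,n}_x$, obtained by differentiating \eqref{eq3ss}; Cauchy--Schwarz on $(\alpha,\beta)$ together with $\|y^n_x\|_{L^2(0,L)}\le\|U^n\|_{\HH}=1$, $\|z^n\|_{L^2(0,L)}\le 1$ and $\|f^{3,n}_x\|_{L^2(0,L)}\le\|F^n\|_{\HH}$ gives exactly the three terms on the right of \eqref{zboundary}. For \eqref{ynxboundary} I would do the same with $g\,\overline{y^n_x}$, using that $c(\cdot)\equiv c_0$ on $(\alpha,\beta)$ so that \eqref{eq4ss} yields $y^n_{xx}=i\la^n z^n-c_0 v^n-f^{4,n}$ there; bounding $\|z^n\|_{L^2(0,L)}\le1$ and $\|v^n\|_{L^2(0,L)}\le1$ and merging the $|\la^n|$- and $c_0$-contributions produces the factor $2(|\la^n|+c_0)M_g$, while the $f^{4,n}$-term gives $2M_g\|F^n\|_{\HH}$.

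The two limits \eqref{limvnboundary}--\eqref{limbigbig} are the same computation combined with Lemma \ref{firstlemmass}. With the multiplier $g\,\overline{v^n}$ one gets $|v^n(\beta)|^2+|v^n(\alpha)|^2\le M_{g^{\prime}}\int_{\alpha}^{\beta}|v^n|^2\,dx+2M_g\big(\int_{\alpha}^{\beta}|v^n|^2\,dx\big)^{1/2}\big(\int_{\alpha}^{\beta}|v^n_x|^2\,dx\big)^{1/2}$, and both integrals tend to $0$ by \eqref{limvn} and \eqref{limvnx}. With the multiplier $g\,\overline{S_1(u^n,v^n,\eta^n)}$, using $S_b=S_1$ on $(0,\beta)$ and \eqref{eq2ss} (so that $(S_1)_x=i\la^n v^n+c_0 z^n-f^{2,n}$ on $(\alpha,\beta)$), one gets a bound of the form $|S_1(\beta^-)|^2+|S_1(\alpha)|^2\le M_{g^{\prime}}\int_{\alpha}^{\beta}|S_1|^2\,dx+2M_g\big(\int_{\alpha}^{\beta}|S_1|^2\,dx\big)^{1/2}\big(|\la^n|\,\|v^n\|_{L^2(\alpha,\beta)}+c_0\|z^n\|_{L^2(\alpha,\beta)}+\|F^n\|_{\HH}\big)$; here $\big(\int_{\alpha}^{\beta}|S_1|^2\,dx\big)^{1/2}\to 0$ by \eqref{limbig} while the remaining bracket stays bounded (because $\la^n\to\omega$, $\|z^n\|_{L^2(0,L)}\le1$, $\|F^n\|_{\HH}\to0$), so the right-hand side tends to $0$.

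I do not expect a genuine obstacle here: the computation is routine integration by parts. The only points needing care are the bookkeeping of which earlier estimate controls which term, and the observation that \eqref{zboundary} and \eqref{ynxboundary} are necessarily stated as inequalities rather than limits, since at this stage we do not yet control the interior integrals $\int_{\alpha}^{\beta}|z^n|^2\,dx$ and $\int_{\alpha}^{\beta}|y^n_x|^2\,dx$ over the coupling region; establishing those is the delicate part of the argument and is presumably carried out in the lemmas that follow.
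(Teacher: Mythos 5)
Your proposal is correct and follows essentially the same argument as the paper: fix the weight $g$ with $g(\alpha)=-g(\beta)=1$, integrate $\frac{d}{dx}\big(g|\cdot|^2\big)$ over $(\alpha,\beta)$ (equivalently, multiply the relevant equation by $2g$ times the conjugate and integrate by parts), substitute the derivative supplied by \eqref{eq2ss}--\eqref{eq4ss}, apply Cauchy--Schwarz with $\|U^n\|_{\HH}=1$, $\|F^n\|_{\HH}\to 0$, and invoke Lemma \ref{firstlemmass} to pass to the limit in \eqref{limvnboundary} and \eqref{limbigbig}. The only (harmless) deviation is in \eqref{limvnboundary}, where you bound the cross term directly by $\|v^n\|_{L^2(\alpha,\beta)}\|v^n_x\|_{L^2(\alpha,\beta)}$ and use \eqref{limvnx}, whereas the paper first substitutes $v^n_x=i\la^n u^n_x-f^{1,n}_x$ from \eqref{eq1ss} and uses \eqref{limunx}; both routes are valid and give the same conclusion.
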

\begin{proof} First, 
	from  \eqref{eq3ss}, we deduce that
	\begin{equation}\label{3x}
	i\la^n y^{n}_x -z^{n}_x =f^{3,n}_x .
	\end{equation} 
	Multiplying  \eqref{3x} and \eqref{eq4ss} by $2g\overline{z^n }$ and $2g\overline{y^{n}_x }$ respectively, integrating over $(\alpha ,\beta)$, using the definition of $c(\cdot)$, then taking the real part, we get
	\begin{equation}\label{3xm}
	\Re \left\{ 2i\la^n \intdni gy^{n}_{x}\overline{z^n }dx\right\}-\int_{\alpha }^{\beta }g\left(\left|z^n \right|^2 \right)_x dx =\Re\left\{ 2\intdni gf^{3,n}_x \overline{z^n }dx\right\}
	\end{equation}and
	\begin{equation}\label{3.27}
	\Re\left\{2i\la^n \intdni gz^n \overline{y^{n}_x }dx \right\}-\intdni g\left(\left|y^{n}_x \right|^2 \right)_x dx-\Re\left\{ 2c_0 \intdni gv^n \overline{y^{n}_x }dx\right\}=\Re \left\{2\intdni g f^{4,n}\overline{y^{n}_x }dx\right\}.
	\end{equation}
	Using integration by parts in  \eqref{3xm} and \eqref{3.27}, we obtain
	\begin{equation*}\label{3xmm}
	\left[-g\left|z^n \right|^2 \right]_{\alpha }^{\beta } =-	\intdni g^{\prime}|z^n |^2 dx-\Re \left\{ 2i\la^n \intdni gy^{n}_{x}\overline{z^n }dx\right\}+\Re\left\{ 2\intdni gf^{3,n}_x \overline{z^n }dx\right\}
	\end{equation*}
	and
		\begin{equation*}\label{3.29}
		\left[-g\left|y^{n}_x \right|^2 \right]_{\alpha }^{\beta }=-\intdni g^{\prime}|y^{n}_x |^2 dx -\Re\left\{2i\la^n \intdni gz^n \overline{y^{n}_x }dx  \right\}+\Re\left\{2c_0 \intdni gv^n \overline{y^{n}_x } \right\}+\Re\left\{2\intdni gf^{4,n} \overline{y^{n}_x }dx  \right\}.
		\end{equation*}
	Using the definition of $g$ and Cauchy-Schwarz inequality in  the above equations, we obtain
	\begin{equation}\label{3.28}	\begin{array}{lll}
	\displaystyle	|z^n (\beta )|^2 +|z^n (\alpha )|^2 \leq\displaystyle M_{g^{\prime}}\intdni |z^n |^2 dx +2|\la^n | M_{g}\left( \intdni |y^{n}_x |^2 dx\right)^{\frac{1}{2}}\left( \intdni |z^{n}  |^2 dx\right)^{\frac{1}{2}}\vspace{0.25cm}\\\hspace{3.25cm}
	+\, \displaystyle2 M_{g}\left( \intdni |f^{3,n}_x  |^2 dx\right)^{\frac{1}{2}}\left( \intdni |z^{n}  |^2 dx\right)^{\frac{1}{2}}
	\end{array}\end{equation}and
	\begin{equation}\label{3.31}
	\begin{array}{lll}
	\displaystyle	|y^{n}_x (\beta )|^2 +|y^{n}_x (\alpha )|^2 \leq \displaystyle M_{g^{\prime}} \intdni |y^{n}_x |^2 dx +2|\la^n |M_g \left(\intdni|y^{n}_x |^2 dx\right)^{\frac{1}{2}} \left(\intdni|z^{n} |^2 dx\right)^{\frac{1}{2}}\vspace{0.25cm}\\
\hspace{3.25cm}	+\, \displaystyle2c_0 M_g \left(\intdni|y^{n}_x |^2 dx\right)^{\frac{1}{2}} \left(\intdni|v^{n} |^2 dx\right)^{\frac{1}{2}} \vspace{0.25cm}\\\hspace{3.25cm} +\, \displaystyle2M_g \left(\intdni|f^{4,n} |^2 dx\right)^{\frac{1}{2}} \left(\intdni|y^{n}_x  |^2 dx\right)^{\frac{1}{2}} .
	\end{array}
	\end{equation}
	Therefore, from  \eqref{3.28},  \eqref{3.31} and the fact that $\displaystyle \intdni |\xi_{1}^n |^2 dx \leq \intdx |\xi_{1}^n |^2 dx \leq \|U^n \|_{\HH}^2 =1$ with $\xi_{1}^n \in \{v^n ,y^{n}_x ,z^n \}$  and $\displaystyle \intdni |\xi_{2}^n  |^2 dx \leq \intdx |\xi_{2}^n|^2 dx \leq \|F^n \|_{\HH}^2 $ with $\xi_{2}^n \in \{f^{3,n}_x ,f^{4,n} \}$, we obtain  \eqref{zboundary} and \eqref{ynxboundary}. 
	On the other hand, from  \eqref{eq1ss}, we deduce that 
	\begin{equation}\label{1x1}
	i\la^n u^{n}_x -v^{n}_x =f^{1,n}_x .
	\end{equation}
	Multiplying  \eqref{1x1} and \eqref{eq2ss} by $2g\overline{v^n }$ and $2g\overline{S_1 }(u^n,v^n,\eta^n)$ respectively, integrating over $(\alpha ,\beta)$, using the definition of $c(\cdot)$ and $S_b (u^n,v^n,\eta^n) $, then taking the real part, we get 
	\begin{equation}\label{3.32}
	\Re\left\{2i\la^n \intdni gu^n_x \overline{v^n}dx \right\}-\intdni g(|v^n |^2 )_x dx =\Re\left\{2\intdni g f^{1,n}_x \overline{v^n}dx \right\}\end{equation}and\begin{equation}\label{3.33}
	\begin{array}{lll}
\displaystyle 	\Re \left\{ 2i\la^n  \intdni gv^n \overline{S_{1} }(u^n,v^n,\eta^n)dx\right\}	-\intdni g\left(\left|S_{1}(u^n,v^n,\eta^n)\right|^2 \right)_x   dx\vspace{0.25cm}\\
\displaystyle
+\,\Re\left\{2c_0  \intdni g z^n \overline{S_{1}  }(u^n,v^n,\eta^n)dx \right\}	=\Re \left\{2\intdni gf^{2,n}\overline{S_{1} }(u^n,v^n,\eta^n) dx\right\}.
	\end{array}
	\end{equation} Using integration by parts in  \eqref{3.32} and \eqref{3.33}, we get 
	\begin{equation*}\label{3.35k}
	\left[-g\left|v^n \right|^2  \right]_{\alpha }^{\beta } =-\intdni g^{\prime}|v^n |^2 dx -\Re \left\{2i\la^n \intdni gu^{n}_x \overline{v^n }dx\right\} +\Re \left\{2\intdni gf^{1,n}_x \overline{v^n }dx \right\}
	\end{equation*}
	and
\begin{equation*}\label{3.36k}
\begin{array}{lll}
	\displaystyle 	\left[ -g \left|S_1 (u^n,v^n,\eta^n)\right|^2 \right]_{\alpha }^{\beta }=-\intdni g^{\prime}\left|S_1 (u^n,v^n,\eta^n) \right|^2 dx -\Re\left\{2i\la^n \intdni g v^n \overline{S_1 }(u^n,v^n,\eta^n)dx \right\}\vspace{0.25cm}\\
	\displaystyle
	\hspace{3.5cm}	-\,\Re\left\{2c_0 \intdni g z^n \overline{S_1 }(u^n,v^n,\eta^n)dx \right\}+\Re\left\{2\intdni g f^{2,n}\overline{S_1 }(u^n,v^n,\eta^n)dx \right\}.
		\end{array}
		\end{equation*}
	Using the definition of $g$ and Cauchy-Schwarz inequality in  the above equations, then using the fact that \begin{equation*}
		\left\{\begin{array}{lll}  \displaystyle \intdni |z^n |^2 dx \leq \intdx |z^n |^2 dx \leq \|U^n \|_{\HH}^2 =1,\quad  \displaystyle \intdni |f^{1,n}_x |^2 dx \leq \intdx |f^{1,n}_x  |^2 dx \leq a^{-1}\|F^n \|_{\HH}^2   \vspace{0.25cm}\\ \text{and}\quad \displaystyle \intdni |f^{2,n}|^2 dx \leq  \intdx |f^{2,n}  |^2 dx \leq \|F^n \|_{\HH}^2, \end{array}\right.
		\end{equation*}we obtain 
	\begin{equation}\label{3.28k}	
	\begin{array}{lll}
	\displaystyle	|v^n (\beta )|^2 +|v^n (\alpha )|^2 \leq\displaystyle M_{g^{\prime}}\intdni |v^n |^2 dx +2|\la^n | M_{g}\left( \intdni |u^{n}_x |^2 dx\right)^{\frac{1}{2}}\left( \intdni |v^{n}  |^2 dx\right)^{\frac{1}{2}}\\
	\hspace{3.2cm}+\, \displaystyle \frac{2}{\sqrt{a}} M_{g}\left( \intdni |v^{n}  |^2 dx\right)^{\frac{1}{2}}\|F^n \|_{\HH}
	\end{array}\end{equation}and
	\begin{equation}\label{3.31k}
	\begin{array}{lll}
	\displaystyle	\left|\left(S_1(u^n,v^n,\eta^n) \right)(\beta^{-} )\right|^2 +\left|\left(S_1(u^n,v^n,\eta^n) \right)(\alpha )\right|^2 \leq  \displaystyle M_{g^{\prime}} \intdni |S_1 (u^n,v^n,\eta^n) |^2 dx \vspace{0.25cm}\\ 
	\displaystyle +\,2|\la^n |M_g \left(\intdni\left|S_1 (u^n,v^n,\eta^n)\right|^2 dx\right)^{\frac{1}{2}} \left(\intdni|v^{n} |^2 dx\right)^{\frac{1}{2}}\vspace{0.25cm}\\
\displaystyle +\,2c_0 M_g \left(\intdni|S_1 (u^n,v^n,\eta^n) |^2 dx\right)^{\frac{1}{2}}  + 2M_g  \left(\intdni|S_1 (u^n,v^n,\eta^n) |^2 dx\right)^{\frac{1}{2}}\|F^n \|_{\HH} .
	\end{array}
	\end{equation}
	Finally, passing to limit in  \eqref{3.28k} and \eqref{3.31k}, then using \eqref{contra1}, Lemma \ref{firstlemmass} and the fact that $\|F^n \|_{\HH}\to 0 $, we obtain \eqref{limvnboundary} and \eqref{limbigbig}. The proof is thus complete. 
\end{proof}
\begin{lem}{\rm\label{3rdlemmass}
		Under the  hypothesis \eqref{H}, the solution $U^n =(u^n ,v^n ,y^n ,z^n ,\eta^n (\cdot,\rho))^{\top}\in D(\AA )$ of  system \eqref{eq1ss}-\eqref{eq4ss} satisfies the following limits
		\begin{equation}\label{limznynx}
		\lim_{n \to \infty} \intdni |z^n |^2 dx =0\quad \text{and} \quad
		\lim_{n \to \infty} \intdni |y^{n}_x  |^2 dx =0.
		\end{equation}}
\end{lem}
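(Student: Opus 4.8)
The plan is to run the whole argument on the interval $(\alpha,\beta)$, where the damping coefficient satisfies $b\equiv 1$ (hence $S_b=S_1$ there) and the coupling coefficient satisfies $c\equiv c_0$. I obtain the two limits in \eqref{limznynx} in succession: first $\int_{\alpha}^{\beta}|z^n|^2\,dx\to 0$, and then, using it, $\int_{\alpha}^{\beta}|y^n_x|^2\,dx\to 0$. The inputs are the localized smallness of Lemma \ref{firstlemmass} (especially \eqref{limvn} and \eqref{limbig}), the boundary bounds \eqref{zboundary}--\eqref{ynxboundary}, the boundary limit \eqref{limbigbig}, the convergences $\la^n\to\omega$ with $\omega\neq 0$ and $\|F^n\|_{\HH}\to 0$, and the uniform bound $\|U^n\|_{\HH}=1$ (which keeps $\|z^n\|_{L^2}$, $\|y^n_x\|_{L^2}$ and $\|y^n\|_{L^\infty}$ bounded). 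Throughout, write $S_1$ for $S_1(u^n,v^n,\eta^n)$.

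\emph{Step 1 (control of $z^n$).} Multiply \eqref{eq2ss} by $\overline{z^n}$ and integrate over $(\alpha,\beta)$; since $S_b=S_1$ and $c=c_0$ there, an integration by parts in $\int_{\alpha}^{\beta}(S_1)_x\overline{z^n}\,dx$ followed by the substitution $z^n_x=i\la^n y^n_x-f^{3,n}_x$ from \eqref{3x} gives
\begin{equation*}
c_0\int_{\alpha}^{\beta}|z^n|^2\,dx=-i\la^n\int_{\alpha}^{\beta}v^n\overline{z^n}\,dx+\Bigl[S_1\,\overline{z^n}\Bigr]_{\alpha}^{\beta}+i\la^n\int_{\alpha}^{\beta}S_1\,\overline{y^n_x}\,dx+\int_{\alpha}^{\beta}S_1\,\overline{f^{3,n}_x}\,dx+\int_{\alpha}^{\beta}f^{2,n}\overline{z^n}\,dx.
\end{equation*}
Each term on the right tends to $0$: the first and third by \eqref{limvn}, \eqref{limbig} and the boundedness of $\la^n$, $\|z^n\|$, $\|y^n_x\|$; the fourth and fifth since $\|F^n\|_{\HH}\to 0$; and the bracket term because $\bigl|(S_1)(\beta^-)\bigr|$ and $\bigl|(S_1)(\alpha)\bigr|$ tend to $0$ by \eqref{limbigbig}, while \eqref{zboundary} together with $\int_{\alpha}^{\beta}|z^n|^2\,dx\le 1$ keeps $|z^n(\alpha)|$, $|z^n(\beta)|$ bounded. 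Hence the first limit in \eqref{limznynx}.

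\emph{Step 2 (control of $y^n_x$).} Plugging $\int_{\alpha}^{\beta}|z^n|^2\,dx\to 0$ back into \eqref{zboundary} gives $|z^n(\alpha)|^2+|z^n(\beta)|^2\to 0$; evaluating \eqref{eq3ss} at $x=\alpha$ and $x=\beta$ (the traces are meaningful since $z^n,f^{3,n}\in H^1_0(0,L)\hookrightarrow C([0,L])$, and $|\la^n|\to|\omega|>0$) then gives $|y^n(\alpha)|\to 0$ and $|y^n(\beta)|\to 0$. Now multiply \eqref{eq4ss} by $\overline{y^n}$, integrate over $(\alpha,\beta)$, integrate by parts in $\int_{\alpha}^{\beta}y^n_{xx}\overline{y^n}\,dx$, and use \eqref{eq3ss} to rewrite $i\la^n\overline{y^n}$, which yields
\begin{equation*}
\int_{\alpha}^{\beta}|y^n_x|^2\,dx=\int_{\alpha}^{\beta}|z^n|^2\,dx+\Bigl[y^n_x\,\overline{y^n}\Bigr]_{\alpha}^{\beta}+c_0\int_{\alpha}^{\beta}v^n\overline{y^n}\,dx+\int_{\alpha}^{\beta}z^n\overline{f^{3,n}}\,dx+\int_{\alpha}^{\beta}f^{4,n}\overline{y^n}\,dx.
\end{equation*}
On the right, the first term is $o(1)$ by Step 1; the last three are $o(1)$ by \eqref{limvn}, $\|F^n\|_{\HH}\to 0$ and the boundedness of $\|y^n\|$, $\|z^n\|$; and the bracket term is $o(1)$ because \eqref{ynxboundary} with $\int_{\alpha}^{\beta}|y^n_x|^2\,dx\le 1$ keeps $|y^n_x(\alpha)|$, $|y^n_x(\beta)|$ bounded, while $|y^n(\alpha)|,|y^n(\beta)|\to 0$. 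This proves the second limit in \eqref{limznynx}.

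\emph{Main obstacle.} The delicate point is the two boundary terms $\bigl[S_1\overline{z^n}\bigr]_{\alpha}^{\beta}$ and $\bigl[y^n_x\overline{y^n}\bigr]_{\alpha}^{\beta}$, in neither of which is a factor a priori small. The argument must be ordered so that in each product one factor is controlled by a previously established endpoint smallness (the trace of $S_1$ from \eqref{limbigbig}, respectively the trace of $y^n$, which is itself produced in Step 2 out of the conclusion of Step 1 via \eqref{zboundary} and \eqref{eq3ss}), while the companion factor only needs to stay bounded, which \eqref{zboundary}, \eqref{ynxboundary} and $\|U^n\|_{\HH}=1$ supply. Getting this chain of deductions in the right order is the crux; everything else is routine Cauchy--Schwarz together with the limits already at our disposal.
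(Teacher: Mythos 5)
Your proposal is correct and follows essentially the same strategy as the paper: Step 1 is the paper's argument verbatim (multiplier $\overline{z^n}$ on \eqref{eq2ss}, integration by parts, substitution of $z^n_x$ from \eqref{3x}, boundary term killed by \eqref{limbigbig} against the \eqref{zboundary}-boundedness of $z^n(\alpha),z^n(\beta)$). Your Step 2 is only a minor variant — you test \eqref{eq4ss} with $\overline{y^n}$ and transfer the endpoint smallness of $z^n$ to $y^n$ through \eqref{eq3ss}, whereas the paper tests with $-(\la^n)^{-1}\overline{z^n}$ and controls $[y^n_x\overline{z^n}]_\alpha^\beta$ directly — but both rest on exactly the same ingredients (the first limit fed back into \eqref{zboundary}, plus \eqref{ynxboundary} for boundedness of $y^n_x$ at the endpoints), so the argument goes through.
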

\begin{proof}
	First, multiplying  \eqref{eq2ss} by $\overline{z^n }$,  integrating over $(\alpha ,\beta )$, using the definition of $c(\cdot)$ and $S_b (u^n,v^n,\eta^n)$, then taking the real part, we get 
	\begin{equation}\label{zm}
	\Re\left\{i\la^n \intdni v^n \overline{z^n }dx\right\}-\Re\left\{\intdni \left(S_{1} (u^n,v^n,\eta^n) \right)_{x}  \overline{z^n }dx\right\}+c_0 \intdni |z^n |^2 dx =\Re \left\{\intdni f^{2,n} \overline{z^n }dx\right\}.
	\end{equation}From  \eqref{eq3ss}, we deduce that 
	\begin{equation}\label{3.25new}
	\overline{z^{n}_x }=-i\la^n \overline{y^{n}_x }-\overline{f^{3,n}_x }. 
	\end{equation} Using integration by parts to the second term in 	 \eqref{zm}, then using \eqref{3.25new}, we get 
	\begin{equation*}\label{3.42}
	\begin{array}{lll}
	\displaystyle	c_0 \intdni |z^n |^2 dx =	\displaystyle \Re \left\{i\la^n \intdni S_1 (u^n,v^n,\eta^n)\overline{y^{n}_x }dx\right\}+\Re\left\{\intdni S_{1}(u^n,v^n,\eta^n) \overline{f^{3,n}_x }dx\right\}\qquad\vspace{0.25cm}\\
	\hspace{2.5cm}	\displaystyle +\,\Re\left\{\left[S_{1} \left(u^n,v^n,\eta^n\right) \overline{z^n} \, \right]_{\alpha }^{\beta }\right\}+\Re \left\{\intdni f^{2,n} \overline{z^n }dx\right\}-\Re\left\{i\la^n \intdni v^n \overline{z^n }dx\right\}.
	\end{array}
	\end{equation*}
	Using Cauchy-Schwarz inequality in the above equation   and the fact that $\displaystyle\intdni |\xi_{1}^n |^2 dx \leq \intdx |\xi_{1}^n |^2 dx \leq  \|U^n \|_{\HH}^2 =1$ with $\displaystyle \xi_{1}^n \in \{y^{n}_x , z^n \} $ and $\displaystyle\intdni |\xi_{2}^n |^2 dx \leq \intdx |\xi_{2}^n |^2 dx \leq  \|F^n \|_{\HH}^2 $ with $\displaystyle \xi_{2}^n \in \{f^{2,n} , f^{3,n}_x \} $, we obtain
	\begin{equation}\label{3.43ss}
	\begin{array}{lll}
	\displaystyle	c_0 \intdni |z^n |^2 dx \leq	\displaystyle \left(\left|\la^n \right|+\left\|F^n \right\|_{\HH}\right)\left(\intdni |S_{1} (u^n,v^n,\eta^n)|^2 dx\right)^{\frac{1}{2}}+|\la^n |\left(\intdni|v^n |^2 dx\right)^{\frac{1}{2}}+\|F^n \|_{\HH}\vspace{0.25cm}\\
	\hspace{2.5cm}	\displaystyle+\,\left|\left(S_{1}(u^n,v^n,\eta^n)\right) (\beta^- )\right||z^n (\beta )|+\left|\left(S_{1} (u^n,v^n,\eta^n)\right)(\alpha )\right||z^n (\alpha)|.
	\end{array}
	\end{equation}
	Now, using the fact that {\small$ \displaystyle \intdni |z^n |^2 dx \leq \intdx |z^n |^2 dx \leq \|U^n \|_{\HH}^2 =1$} in \eqref{zboundary}, we get
	\begin{equation}\label{bouznsimp}
	|z^n (x )|\leq \left(M_{g^{\prime}}+2|\la^n |M_{g}+2M_{g}\|F^n \|_{\HH}\right)^{\frac{1}{2}}\quad \text{for}\quad  x\in \{\alpha,\beta  \}.
	\end{equation}Inserting \eqref{bouznsimp} in \eqref{3.43ss}, we obtain
	\begin{equation*}\label{3.384}
	\begin{array}{lll}
	\displaystyle	c_0 \intdni |z^n |^2 dx \leq	\displaystyle (|\la^n |+\|F^n \|_{\HH})\left(\intdni |S_{1} (u^n,v^n,\eta^n)|^2 dx\right)^{\frac{1}{2}}+|\la^n |\left(\intdni|v^n |^2 dx\right)^{\frac{1}{2}}+\|F^n \|_{\HH},\vspace{0.25cm}\\
	\hspace{2.5cm}	\displaystyle+\, \left(M_{g^{\prime}}+2\left|\la^n \right|M_{g}+2M_{g}\left\|F^n \right\|_{\HH}\right)^{\frac{1}{2}}\left(\left|\left(S_{1}(u^n,v^n,\eta^n)\right) (\beta^- )\right|+\left|\left(S_{1}(u^n,v^n,\eta^n)\right) (\alpha )\right|\right).
	\end{array}
	\end{equation*}
	Therefore, by passing to the limit in the above inequality and by using \eqref{contra1}, \eqref{limbigbig}, Lemma \ref{firstlemmass} and the fact that $\displaystyle \|F^n \|_{\HH}\to 0$, we obtain  the first limit in \eqref{limznynx}.
	On the other hand, multiplying  \eqref{eq4ss} by $-\overline{z^n }(\la^n )^{-1}$, integrating over $(\alpha ,\beta )$, using the definition of $c(\cdot)$, then taking the imaginary part, we get
	\begin{equation*}\label{intynx}
		-\intdni |z^n |^2 dx +\Im\left\{ (\la^n )^{-1}\intdni y^{n}_{xx}\overline{z^n }dx\right\}+\Im\left\{ c_0 (\la^n )^{-1} \intdni v^n \overline{z^n }dx\right\}=-\Im\left\{(\la^n )^{-1}\intdni f^{4,n}\overline{z^n }dx \right\}.
		\end{equation*}
	Using integration by parts to the second term in the above equation, then using \eqref{3.25new}, we obtain
	\begin{equation*}\label{intynxsimplify}
	\begin{array}{lll}
	\displaystyle	\intdni |y^{n}_x |^2 dx =\displaystyle\intdni |z^n |^2 dx -\Im\left\{(\la^n )^{-1}\intdni  \overline{f^{3,n}_x }y^{n}_x dx\right\}-\Im \left\{(\la^n )^{-1}\left[y^{n}_x \overline{z^n }\right]_{\alpha }^{\beta }\right\} \vspace{0.25cm}\\\hspace{2cm}\displaystyle-\,\Im\left\{c_0 (\la^n )^{-1}\intdni v^n \overline{z^n }dx\right\}-\Im\left\{(\la^n )^{-1}\intdni f^{4,n}\overline{z^n }dx\right\}.
	\end{array}
	\end{equation*}
	Using Cauchy-Schwarz inequality in  the above equation and the fact that $\|U^n \|_{\HH}=1$, we get 
	\begin{equation}\label{3.43}
	\begin{array}{lll}
	\displaystyle	\intdni |y^{n}_x |^2 dx \leq	\displaystyle\intdni |z^n |^2 dx  +c_0 |\la^n |^{-1}\left(\intdni|v^n |^2 dx\right)^{\frac{1}{2}}+2|\la^n |^{-1}\|F^n \|_{\HH}\vspace{0.25cm}\\\hspace{2.2cm}+\,	\displaystyle|\la^n |^{-1}|y^{n}_x (\beta )||z^{n} (\beta )|+|\la|^{-1}|y^{n}_x (\alpha)||z^{n} (\alpha)|.
	\end{array}	\end{equation}
	Moreover, using  the fact that {\small $\displaystyle \intdni |y^{n}_x |^2 dx  \leq \intdx |y^{n}_x |^2 dx \leq \|U^n \|_{\HH}^2 =1 $} in \eqref{ynxboundary}, we get 
	\begin{equation}\label{3.45}
	|y^{n}_x (x )|\leq \left(M_{g^{\prime}}+2(|\la^n |+c_0 )M_{g}+2M_{g}\|F^n \|_{\HH}\right)^{\frac{1}{2}}\quad	\text{for} \quad x \in \{\alpha , \beta\} . \end{equation}Inserting \eqref{3.45} in \eqref{3.43}, we obtain 	\begin{equation}\label{3.43new}
	\begin{array}{lll}
	\displaystyle	\intdni |y^{n}_x |^2 dx \leq	\displaystyle\intdni |z^n |^2 dx  +c_0 |\la^n |^{-1}\left(\intdni|v^n |^2 dx\right)^{\frac{1}{2}}+2|\la^n |^{-1}\|F^n \|_{\HH}\vspace{0.25cm}\\\hspace{2cm}+\,	\displaystyle|\la^n |^{-1}\left(M_{g^{\prime}}+2(|\la^n |+c_0 )M_{g}+2M_{g}\|F^n \|_{\HH}\right)^{\frac{1}{2}}\left( |z^{n} (\beta )|+|z^{n} (\alpha )|\right).
	\end{array}	\end{equation}
	Now, passing to the limit in inequality \eqref{zboundary}, then using \eqref{contra1}, the first limit in \eqref{limznynx} and the fact that $\|F^n \|_{\HH}\to 0$, we get \begin{equation}\label{3.44}
	\lim_{n\to \infty}\left( \left|z^n (\beta )\right|^2+ \left|z^n (\alpha)\right|^2 \right)=0. 
	\end{equation}
	Finally, passing to the limit in \eqref{3.43new}, then using \eqref{contra1}, \eqref{limvn}, the first limit in  \eqref{limznynx}, \eqref{3.44} and the fact that $\|F^n \|_{\HH}\to 0$, we obtain the second limit in  \eqref{limznynx}. The proof is thus complete.
\end{proof}
\begin{lem}\label{matrix}
	{\rm Under the  hypothesis \eqref{H}, the solution $U^n =(u^n ,v^n ,y^n ,z^n ,\eta^n (\cdot,\rho))^{\top}\in D(\AA )$ of system \eqref{eq1ss}-\eqref{eq5ss} satisfies the following estimations
		\begin{equation}\label{gonf1}
		\lim_{n\to \infty}|u^n (\beta )|^2 =0 \quad \text{and } \quad \lim_{n\to \infty}|y^n (\beta )|^2 =0,
		\end{equation}
		\begin{equation}\label{gonf2}
		\lim_{n\to \infty}|u^{n}_x (\beta^{+} )|^2 =0 \quad \text{and } \quad \lim_{n\to \infty}|y^{n}_x (\beta )|^2 =0,
		\end{equation}
		\begin{equation}\label{gonf3}
		\lim_{n\to \infty }\left(\int_{\beta }^{\gamma } |u^n |^2 dx+\int_{\beta }^{\gamma } |u^{n}_x |^2 dx+\int_{\beta }^{\gamma } |y^n |^2 dx +\int_{\beta }^{\gamma } |y^{n}_x |^2 dx\right)=0,
		\end{equation}
		\begin{equation}\label{gonf4}
		\lim_{n\to \infty}	\int_{\beta }^{\gamma } |v^n |^2 dx=0 \quad \text{and}\quad \lim_{n\to \infty }\int_{\beta }^{\gamma } |z^n |^2 dx =0 . 
		\end{equation}}
\end{lem}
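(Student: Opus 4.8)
The plan is to transport the decay already established on $(\alpha,\beta)$ across the non‑smooth interface $x=\beta$ and then along $(\beta,\gamma)$, exploiting that on this subinterval $b\equiv 0$ (no Kelvin--Voigt term) while $c\equiv c_0$ (full coupling), so that the resolvent equations \eqref{eq1ss}--\eqref{eq4ss} reduce there to a linear ODE system with bounded coefficients and vanishing right‑hand side.

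First I would produce the boundary data \eqref{gonf1}--\eqref{gonf2} at $x=\beta$. From \eqref{eq1ss} and \eqref{eq3ss} one has $u^n=(i\la^n)^{-1}(v^n+f^{1,n})$ and $y^n=(i\la^n)^{-1}(z^n+f^{3,n})$ on $(0,L)$; since $H^1_0(0,L)\hookrightarrow C([0,L])$, the traces $f^{1,n}(\beta),f^{3,n}(\beta)$ tend to $0$, and combined with \eqref{limvnboundary}, \eqref{3.44} and the fact that $|\la^n|\to|\omega|>0$ (by \eqref{contra1}) this yields \eqref{gonf1}. For \eqref{gonf2}, I would use that $U^n\in D(\AA)$ forces $(S_b(u^n,v^n,\eta^n))_x\in L^2(0,L)$, hence $S_b(u^n,v^n,\eta^n)\in H^1(0,L)\hookrightarrow C([0,L])$ is continuous at $\beta$; since $S_b=S_1(u^n,v^n,\eta^n)$ on $(0,\beta)$ and $S_b=au^n_x$ on $(\beta,L)$, continuity gives $a\,u^n_x(\beta^+)=(S_1(u^n,v^n,\eta^n))(\beta^-)\to 0$ by \eqref{limbigbig}; and $y^n_x(\beta)\to 0$ follows by passing to the limit in \eqref{ynxboundary} using $\int_\alpha^\beta|y^n_x|^2dx\to 0$ from Lemma \ref{3rdlemmass}.

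Next I would propagate on $(\beta,\gamma)$ to obtain \eqref{gonf3}. Substituting $v^n=i\la^n u^n-f^{1,n}$ and $z^n=i\la^n y^n-f^{3,n}$ into \eqref{eq2ss} and \eqref{eq4ss} (with $b\equiv 0$, $c\equiv c_0$) eliminates $v^n,z^n$ and, writing $W^n:=(u^n,u^n_x,y^n,y^n_x)^\top$, produces a first‑order system $\frac{d}{dx}W^n=\mathcal M(\la^n)W^n+G^n$ on $(\beta,\gamma)$, where $\mathcal M(\la^n)$ has entries that are polynomials in $\la^n$, $a^{-1}$, $c_0$, and $G^n$ is built linearly from $f^{1,n},f^{2,n},f^{3,n},f^{4,n}$. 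By \eqref{contra1} the $\la^n$ are bounded, so $\sup_n\|\mathcal M(\la^n)\|<\infty$ and the transition matrix satisfies $\|e^{\mathcal M(\la^n)(x-s)}\|\le C$ uniformly for $x,s\in[\beta,\gamma]$ and all large $n$; moreover $\|G^n\|_{L^2(\beta,\gamma)}\to 0$ since $f^{1,n},f^{3,n}\to 0$ in $H^1_0(0,L)$ and $f^{2,n},f^{4,n}\to 0$ in $L^2(0,L)$. Variation of parameters then gives $\|W^n(x)\|\le C\bigl(\|W^n(\beta)\|+\sqrt{\gamma-\beta}\,\|G^n\|_{L^2(\beta,\gamma)}\bigr)$ on $[\beta,\gamma]$, so $\sup_{[\beta,\gamma]}\|W^n\|\to 0$ by the previous paragraph, and integrating over $(\beta,\gamma)$ yields \eqref{gonf3}. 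Finally \eqref{gonf4} follows immediately from $v^n=i\la^n u^n-f^{1,n}$, $z^n=i\la^n y^n-f^{3,n}$, \eqref{gonf3}, $|\la^n|<|\omega|$ and $\|F^n\|_{\HH}\to 0$.

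I expect the main obstacle to be the uniform‑in‑$n$ control of the transition matrix $e^{\mathcal M(\la^n)(x-s)}$ underpinning the Gronwall estimate: this works precisely because the contradiction sequence satisfies $|\la^n|<|\omega|$, so the coefficients do not blow up. A second subtle point is \eqref{gonf2}: one must invoke the continuity of the Kelvin--Voigt flux $S_b(u^n,v^n,\eta^n)$ across the non‑smooth interface $x=\beta$, since $u^n_x$ itself is not continuous there.
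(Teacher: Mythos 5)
Your proposal is correct and follows essentially the same route as the paper: traces at $\beta$ from \eqref{eq1ss}, \eqref{eq3ss} together with \eqref{limvnboundary}, \eqref{3.44}; continuity of the flux $S_b(u^n,v^n,\eta^n)\in H^1(0,L)\subset C([0,L])$ plus \eqref{limbigbig} and \eqref{ynxboundary} for \eqref{gonf2}; and reformulation of \eqref{eq1ss}--\eqref{eq4ss} on $(\beta,\gamma)$ as a first-order ODE system in $(u^n,u^n_x,y^n,y^n_x)^\top$ with coefficients bounded via $|\la^n|<|\omega|$, vanishing data at $\beta^+$ and vanishing right-hand side, solved by the variation-of-parameters formula exactly as in the paper. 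The final step recovering $v^n,z^n$ from $u^n,y^n$ to get \eqref{gonf4} also coincides with the paper's argument.
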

\begin{proof} First, from  \eqref{eq1ss} and \eqref{eq3ss}, we get 
	\begin{equation*}
	|u^n (\beta )|^2 \leq 2(\la^n )^{-2}|v^n (\beta )|^2 +2(\la^n )^{-2}|f^{1,n}(\beta )|^2
	\end{equation*}
	and 
	\begin{equation*}
	|y^n (\beta )|^2 \leq 2(\la^n )^{-2}|z^n (\beta )|^2 +2(\la^n )^{-2}|f^{3,n}(\beta )|^2.
	\end{equation*}
Using the fact that  {\small$\displaystyle|f^{1,n}(\beta )|^2 \leq \beta \int_{0}^{\beta }|f^{1,n}_x |^2 dx \leq \beta a^{-1}  \|F^n \|_{\HH}^2 $ and  $\displaystyle|f^{3,n}(\beta )|^2 \leq \beta \int_{0}^{\beta }|f^{3,n}_x |^2 dx \leq \beta  \|F^n \|_{\HH}^2 $} in  the above inequalities, we obtain
	\begin{equation*}\label{unalph3}
	|u^n (\beta )|^2 \leq 2(\la^n )^{-2}|v^n (\beta )|^2 +2\beta a^{-1} (\la^n )^{-2}\|F^n \|_{\HH}^2 \end{equation*}and
	\begin{equation*}\label{ynalph3}	|y^n (\beta )|^2 \leq 2(\la^n )^{-2}|z^n (\beta )|^2 +2\beta  (\la^n )^{-2}\|F^n \|_{\HH}^2 .
	\end{equation*}
	 Passing to the limit in the above inequalities, then using \eqref{contra1}, \eqref{limvnboundary}, \eqref{3.44} and the fact that $\|F^n \|_{\HH}\to 0$, we obtain \eqref{gonf1}.
	Second, since $S_b (u^n,v^n,\eta^n)\in H^1 (0,L)\subset C([0,L])$, then we deduce that
	\begin{equation}\label{modulssn}
	\left|\left(S_{1} (u^n,v^n,\eta^n)\right) (\beta^{-})\right|^2 =|a u^{n}_x (\beta^{+})|^2 .
	\end{equation}Thus, from \eqref{limbigbig} and \eqref{modulssn}, we obtain the first limit in \eqref{gonf2}. Moreover, passing to the limit in inequality \eqref{ynxboundary}, then using \eqref{contra1}, the second limit in \eqref{limznynx} and the fact that $\|F^n \|_{\HH}\to 0$, we obtain the second limit in \eqref{gonf2}. 
	On the other hand,  \eqref{eq1ss}-\eqref{eq4ss} can be written in $(\beta ,\gamma )$ as the following form
	\begin{eqnarray}
	(\la^n )^2 u^n +au^{n}_{xx}-i\la^n c_0 y^n &=&G^{1,n} \quad \text{in} \  \ (\beta,\gamma),\label{3.58}\\(\la^n )^2 y^n +y^{n}_{xx}+i\la^n c_0 u^n &=&G^{2,n}\quad \text{in}\ \  (\beta,\gamma),\label{3.59}
	\end{eqnarray}where \begin{equation}\label{g1ng2n}\displaystyle G^{1,n}=-f^{2,n}-i\la^n f^{1,n}-c_0 f^{3,n} \quad \text{and} \quad  \displaystyle G^{2,n}=-f^{4,n}-i\la^n f^{3,n}+c_0 f^{1,n}. \end{equation}
	Let $V^n =(u^n ,u^{n}_x ,y^n ,y^{n}_x )^{\top}$, then  \eqref{3.58}-\eqref{3.59} can be written as the following \begin{equation}\label{de}
	V^{n}_x =B^n V^n +G^n ,
	\end{equation}where
	\begin{equation*}
	B^n =  \begin{pmatrix}
	0&1&0&0\\
	-a^{-1}(\la^n )^2 &0&a^{-1}i\la^n c_0 &0\\
	0&0&0&1\\
	-i\la^n c_0 &0&-(\la^n )^2 &0
	\end{pmatrix}=(b_{ij})_{1\leq i,j\leq 4}\quad
	\text{and}\quad 
	G^n =\begin{pmatrix}
	0 \\
	a^{-1}G^{1,n}\\
	0\\
	G^{2,n}
	\end{pmatrix}.
	\end{equation*}The solution of the differential equation \eqref{de} is given by
	\begin{equation}\label{solde}
	V^{n}(x)=e^{B^n (x-\beta )}V^n (\beta^+ )+\int_{\beta }^{x}e^{B^n (s-x)}G^n (s)ds,
	\end{equation}where $\displaystyle e^{B^n (x-\beta )}=(c_{ij})_{1\leq i,j\leq 4}$ and $\displaystyle e^{B^n (s-x )}=(d_{ij})_{1\leq i,j \leq 4}$ are denoted by the exponential of the matrices $B^n (x-\beta )$ and $B^n (s-x)$ respectively. Now, from \eqref{contra1}, the entries $b_{ij}$ are bounded for all $1\leq i,j \leq 4$ and consequently, the entries $b_{ij}$ $(x-\beta )$ and $b_{ij}$ $(s-x )$ are bounded. 
	In addition, from the definition of the exponential of a square matrix, we obtain
	\begin{equation*}
	e^{B^n \zeta}=\sum_{k=0}^{\infty} \frac{(B^n \zeta)^k }{k!}\quad \text{for} \quad \zeta \in \{x-\beta, s-x\} .
	\end{equation*} Therefore, the entries $c_{ij}$ and $d_{ij}$ are also bounded for all $1\leq i,j\leq 4$ and consequently, $e^{B^n (x-\beta )}$ and $e^{B^n (s-x )}$ are two bounded matrices.
	From \eqref{gonf1} and \eqref{gonf2}, we directly obtain \begin{equation}\label{ V(alph3)} V^n (\beta^+ ) \to 0 \quad \text{in} \quad (L^2 (\beta ,\gamma ))^4 , \quad \text{as} \quad n\to \infty. \end{equation}
	Moreover, from \eqref{g1ng2n}, we deduce that \begin{equation}\label{G1n}
	\int_{\beta}^{\gamma} |G^{1,n}|^2 dx \leq 3\intdx |f^{2,n} |^2 dx +3 (\la^n )^2 \intdx|f^{1,n}|^2 dx +3c_{0}^2  \intdx|f^{3,n}|^2 dx 
	\end{equation}and \begin{equation}\label{G2n}
	\int_{\beta}^{\gamma} |G^{2,n}|^2 dx \leq 3\intdx |f^{4,n} |^2 dx +3 (\la^n )^2 \intdx|f^{3,n}|^2 dx +3c_{0}^2  \intdx|f^{1,n}|^2 dx .
	\end{equation}Now, since $f^{1,n}, f^{3,n} \in H^{1}_0 (0,L)$, then it follows by Poincar\'e inequality that there exist  two constants $C_1 >0$ and $C_2 >0 $ such that 
	\begin{equation}\label{poincare}
	\|f^{1,n}\|_{L^2 (0,L)} \leq C_{1} \|f^{1,n}_x \|_{L^2 (0,L)} \quad \text{and}\quad 	\|f^{3,n}\|_{L^2 (0,L)} \leq C_{2}\|f^{3,n}_x \|_{L^2 (0,L)}.
	\end{equation}Consequently, from \eqref{G1n}, \eqref{G2n} and \eqref{poincare}, we get
	\begin{equation}\label{normg1n}
	\int_{\beta}^{\gamma}|G^{1,n}|^2 dx \leq 3\left(1+a^{-1}(\la^n  C_{1})^2 +(c_{0} C_{2})^2 \right)\|F^n \|_{\HH}^2 ,
	\end{equation} and 
	\begin{equation}\label{normg2n}
	\int_{\beta}^{\gamma}|G^{2,n}|^2 dx \leq 3\left(1+(\la^n  C_{1})^2 +a^{-1}(c_{0} C_{2})^2 \right)\|F^n \|_{\HH}^2 .
	\end{equation}
	Hence, from \eqref{contra1}, \eqref{normg1n}, \eqref{normg2n}  and the fact that $\|F^n \|_{\HH}\to 0$, we obtain \begin{equation}\label{Gn} G^n \to 0\quad \text{in}\quad  (L^2 (\beta ,\gamma ))^4 , \quad \text{as} \quad n\to \infty. \end{equation} Therefore, from \eqref{solde}, \eqref{ V(alph3)}, \eqref{Gn} and as $e^{B^n (x-\beta )}$, $e^{B^n (s-x )}$ are two bounded matrices, we get $V^n \to 0 $ in $  (L^2 (\beta ,\gamma))^4$ and consequently, we obtain \eqref{gonf3}. Next, from \eqref{eq1ss} , \eqref{eq3ss} and \eqref{poincare}, we deduce that 
	\begin{eqnarray*}
	&&	\int_{\beta}^{\gamma }|v^n |^2 dx \leq  2(\la^n )^2 \int_{\beta }^{\gamma }|u^n |^2 dx +2\int_{\beta }^{\gamma }|f^{1,n}|^2 dx\leq  2(\la^n )^2 \int_{\beta }^{\gamma }|u^n |^2 dx +2C_1 a^{-1}\|F^n \|_{\HH}^2  ,\label{3.70}\\&&	\int_{\beta}^{\gamma }|z^n |^2 dx \leq 2(\la^n )^2 \int_{\beta }^{\gamma }|y^n |^2 dx +2\int_{\beta }^{\gamma }|f^{3,n}|^2 dx\leq 2(\la^n )^2 \int_{\beta }^{\gamma }|y^n |^2 dx +2C_2 \|F^n \|_{\HH}^2.\label{3.71}
	\end{eqnarray*}
	Finally, passing to the limit in the above inequalities, then using \eqref{contra1}, \eqref{gonf3} and the fact that $\|F^n \|_{\HH}\to 0$, we obtain \eqref{gonf4}. The proof is thus complete.
\end{proof}
\begin{lem}\label{2hsn2hynx}{\rm Let $h \in C^1 ([0,L])$ be a function. Under the  hypothesis \eqref{H}, the solution $U^n =(u^n ,v^n ,y^n ,z^n ,\eta^n (\cdot,\rho) )^{\top}\in D(\AA )$ of  system \eqref{eq1ss}-\eqref{eq5ss} satisfies the following estimation}
		\begin{equation*}\label{2hsn2hynxeq}
		\begin{array}{lll}
		&& \displaystyle\intdx h^{\prime}\left(a^{-1}|S_b (u^n,v^n,\eta^n) |^2 +|v^n |^2 +|z^n |^2 + \left|y^{n}_x \right|^2\right)dx-\left[h\left(a^{-1}|S_b(u^n,v^n,\eta^n) |^2 +\left|y^{n}_x \right|^2 \right)\right]_{0}^{L}\vspace{0.25cm}\\
		&&\displaystyle -\,\Re\left\{2\intdx c(\cdot)h v^n \overline{y^{n}_x  }dx\right\}  +\Re\left\{ \frac{2}{a}\intdx c(\cdot)h z^n \overline{S_b}(u^n,v^n,\eta^n) dx\right\}+\Re \left\{\frac{2i\la^n}{a} \intdx b(\cdot)hv^n (\kappa_{1}\overline{v^{n}_x }+\kappa_2 \overline{\eta^{n}_x }(\cdot,1))dx\right\}\vspace{0.25cm}\\&&=\displaystyle \Re\left\{2\intdx h\overline{f^{1,n}_x }v^n dx \right\}+\Re \left\{\frac{2}{a}\intdx h f^{2,n}\overline{S_b  }(u^n,v^n,\eta^n)dx \right\}+\Re \left\{2\intdx h \overline {f^{3,n}_x } z^ndx \right\}+\Re \left\{2\intdx h f^{4,n}\overline{y^{n}_x  }dx \right\}.
		\end{array}
		\end{equation*}
\end{lem}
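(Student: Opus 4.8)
This is the standard $h$-multiplier (equipartition-type) identity for the resolvent system; writing $S_b$ for $S_b(u^n,v^n,\eta^n)$, the plan is to produce it by testing \eqref{eq2ss} and \eqref{eq4ss} against $h$ times suitable multipliers, integrating by parts on $(0,L)$, taking real parts, and adding the two resulting relations. First I would differentiate \eqref{eq1ss} and \eqref{eq3ss} in $x$ to record $i\la^n u^n_x-v^n_x=f^{1,n}_x$ and $i\la^n y^n_x-z^n_x=f^{3,n}_x$ in $L^2(0,L)$; these (together with \eqref{etan}, through which \eqref{eq5ss} enters via the term $\kappa_2\eta^n_x(\cdot,1)$ inside $S_b$) are what will let me re-express the $i\la^n$-terms that appear below.

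Next I would multiply \eqref{eq2ss} by $2a^{-1}h\,\overline{S_b}$, integrate over $(0,L)$, and take the real part. Integration by parts turns $-2a^{-1}\Re\int_0^L h(S_b)_x\overline{S_b}\,dx$ into $a^{-1}\int_0^L h'|S_b|^2\,dx-a^{-1}\bigl[h|S_b|^2\bigr]_0^L$, which supplies the $a^{-1}|S_b|^2$ contributions to the volume integral and to the boundary term. In $2a^{-1}\Re\int_0^L h\,i\la^n v^n\overline{S_b}\,dx$ I would split $\overline{S_b}=a\,\overline{u^n_x}+b(\cdot)\bigl(\kappa_1\overline{v^n_x}+\kappa_2\overline{\eta^n_x}(\cdot,1)\bigr)$: the $b(\cdot)$-piece is exactly the term $\Re\{\tfrac{2i\la^n}{a}\int_0^L b(\cdot)h v^n(\kappa_1\overline{v^n_x}+\kappa_2\overline{\eta^n_x}(\cdot,1))\,dx\}$, whereas in the $a\,\overline{u^n_x}$-piece I would invoke the differentiated \eqref{eq1ss} to replace $i\la^n\overline{u^n_x}$ by $-\overline{v^n_x}-\overline{f^{1,n}_x}$ and then integrate $-2\Re\int_0^L h v^n\overline{v^n_x}\,dx$ by parts; since $v^n\in H^1_0(0,L)$ the boundary contribution drops, leaving $\int_0^L h'|v^n|^2\,dx$ and the source term $-2\Re\int_0^L h v^n\overline{f^{1,n}_x}\,dx$. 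The remaining pieces $2a^{-1}\Re\int_0^L c(\cdot)h z^n\overline{S_b}\,dx$ and $2a^{-1}\Re\int_0^L h f^{2,n}\overline{S_b}\,dx$ I would leave untouched.

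Symmetrically, I would multiply \eqref{eq4ss} by $2h\,\overline{y^n_x}$, integrate over $(0,L)$, and take real parts. Integration by parts gives $-2\Re\int_0^L h y^n_{xx}\overline{y^n_x}\,dx=\int_0^L h'|y^n_x|^2\,dx-\bigl[h|y^n_x|^2\bigr]_0^L$, and in $2\Re\int_0^L h\,i\la^n z^n\overline{y^n_x}\,dx$ I would substitute $i\la^n\overline{y^n_x}=-\overline{z^n_x}-\overline{f^{3,n}_x}$ from the differentiated \eqref{eq3ss} and integrate $-2\Re\int_0^L h z^n\overline{z^n_x}\,dx$ by parts; here the boundary term vanishes because $z^n\in H^1_0(0,L)$, leaving $\int_0^L h'|z^n|^2\,dx$ and $-2\Re\int_0^L h z^n\overline{f^{3,n}_x}\,dx$, together with the coupling term $-2\Re\int_0^L c(\cdot)h v^n\overline{y^n_x}\,dx$ and the source $2\Re\int_0^L h f^{4,n}\overline{y^n_x}\,dx$. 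Finally I would add the two identities, collect the $h'$-integrals into $\int_0^L h'(a^{-1}|S_b|^2+|v^n|^2+|z^n|^2+|y^n_x|^2)\,dx$, merge the two boundary terms into $-\bigl[h(a^{-1}|S_b|^2+|y^n_x|^2)\bigr]_0^L$, and transfer the $f^{1,n}_x$- and $f^{3,n}_x$-terms to the right-hand side; this reproduces the stated formula.

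No step here is deep — it is essentially a bookkeeping computation — so the closest thing to an obstacle is making sure the integrations by parts are legitimate. Because of the indicator $b(\cdot)$ inside $S_b$, the derivative $u^n_x$ has a jump at $x=\beta$, so one cannot integrate against $u^n_{xx}$; this is precisely why the multiplier is written in terms of $S_b$ rather than of $u^n_x$. For $U^n\in D(\AA)$ one has $S_b\in H^1(0,L)\hookrightarrow C([0,L])$ and $y^n\in H^2(0,L)$, while $v^n,z^n\in H^1_0(0,L)$, so each product subjected to integration by parts lies in $W^{1,1}(0,L)$ and the elementary identity $\int_0^L(h|w|^2)'\,dx=\bigl[h|w|^2\bigr]_0^L$ applies; after that, only careful tracking of signs inside the various $\Re\{\cdot\}$ is needed to land on the asserted equality.
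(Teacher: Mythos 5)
Your proposal is correct and follows essentially the same route as the paper's proof: the same multipliers $2a^{-1}h\,\overline{S_b}$ and $2h\,\overline{y^n_x}$, the same substitution of the differentiated equations \eqref{eq1ss} and \eqref{eq3ss} (equivalently the identity $i\la^n\overline{S_b}=-a(\overline{v^n_x}+\overline{f^{1,n}_x})+i\la^n b(\cdot)(\kappa_1\overline{v^n_x}+\kappa_2\overline{\eta^n_x}(\cdot,1))$), and the same integrations by parts using $v^n,z^n\in H^1_0(0,L)$. The only difference is cosmetic — you integrate by parts term by term while the paper keeps the terms as $h(|\cdot|^2)_x$ and integrates by parts once at the end — and your remark on the regularity of $S_b\in H^1(0,L)$ justifying the manipulations is consistent with the paper.
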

\begin{proof}
	First, multiplying  \eqref{eq2ss} and \eqref{eq4ss} by $2a^{-1}h\overline{S_b }(u^n,v^n,\eta^n)$ and $2h\overline{y^{n}_x }$ respectively, integrating over $(0,L)$, then taking the real part, we get 
		\begin{equation}\label{inthsn}
		\begin{array}{lll}
	\displaystyle 	\Re\left\{\frac{2i\la^n}{a}  \intdx h v^n \overline{S_b }(u^n,v^n,\eta^n)dx\right\}-a^{-1}\intdx h\left(\left|S_b (u^n,v^n,\eta^n)  \right|^2 \right)_x dx \vspace{0.25cm}\\
		\displaystyle +\,\Re\left\{\frac{2}{a}\intdx c(\cdot)hz^n \overline{
			S_b  }(u^n,v^n,\eta^n)dx\right\}=\Re\left\{ \frac{2}{a}\intdx hf^{2,n}\overline{S_b  }(u^n,v^n,\eta^n)dx \right\}
		\end{array}
		\end{equation}
		and
		\begin{equation}\label{inthynx}
	\Re\left\{ 2i\la^n \intdx h z^n \overline{y^{n}_x  }dx\right\}-\intdx h\left(\left|y^{n}_x  \right|^2 \right)_x dx -\Re\left\{2\intdx c(\cdot)hv^n \overline{
		y^{n}_x }dx\right\}=\Re\left\{2\intdx hf^{4,n}\overline{y^{n}_x  }dx \right\}.
	\end{equation}
	From  \eqref{eq1ss} and \eqref{eq3ss}, we deduce that 
	\begin{eqnarray}
	i\la^n \overline{u^{n}_x }=-\overline{v^{n}_x }-\overline{f^{1,n}_x },\label{1barx}\\
	i\la^n \overline{y^{n}_x }=-\overline{z^{n}_x }-\overline{f^{3,n}_x }\label{3barx}.
	\end{eqnarray}
	Consequently, from  \eqref{1barx} and the definition $S_b (u^n,v^n,\eta^n)$, we have
	\begin{equation}\label{ilasbar}
	i\la \overline{S_b }(u^n,v^n,\eta^n)=-a\left(\overline{v^{n}_x } +\overline{f^{1,n}_x }\right)+i\la b(\cdot)\left(\kappa_1 \overline{v^{n}_x }+\kappa_2 \overline{\eta^{n}_x } (\cdot,1)\right).
	\end{equation}
	Substituting \eqref{ilasbar} and \eqref{3barx} in  \eqref{inthsn} and \eqref{inthynx} respectively, we obtain
\begin{equation*}\label{3.77}
	\begin{array}{lll}
		&& \displaystyle-\,\intdx h\left(\left|v^n \right|^2 +a^{-1}\left|S_b(u^n,v^n,\eta^n)\right|^2 \right)_x dx +\Re \left\{\frac{2i\la^n}{a} \intdx b(\cdot)hv^n (\kappa_{1}\overline{v^{n}_x }+\kappa_2 \overline{\eta^{n}_x }(\cdot,1))dx\right\}\vspace{0.25cm}\\&&\displaystyle+\,\Re\left\{\frac{2}{a}\intdx c(\cdot)hz^n \overline{
			S_b  }(u^n,v^n,\eta^n)dx\right\} =\displaystyle \Re\left\{2\intdx h\overline{f^{1,n}_x } v^n dx\right\}+\Re\left\{ \frac{2}{a}\intdx hf^{2,n}\overline{S_b  }(u^n,v^n,\eta^n)dx \right\}\vspace{0.25cm}
		\end{array}
		\end{equation*} 
		and
\begin{equation*}\label{inthynxsub}
		\begin{array}{lll}
		&&\displaystyle -\,\intdx h \left(\left|z^n \right|^2 +\left|y^{n}_x \right|^2  \right)_x dx -\Re\left\{2\intdx c(\cdot)hv^n \overline{
			y^{n}_x }dx\right\}=	\displaystyle \Re\left\{2\intdx hf^{4,n}\overline{y^{n}_x  }dx \right\}+	\displaystyle \Re\left\{2\intdx h \overline{f^{3,n}_x }z^n dx  \right\}.
		\end{array}
		\end{equation*}
		Finally,  adding the above equations, then using integration by parts and the fact that $v^n (0)=v^n (L)=0$ and $z^n (0)=z^n (L)=0$, we obtain  the desired result. The proof is thus complete.
\end{proof}\\\linebreak
Now, we fix the cut-off functions $\chi_1 , \chi_2\in C^1 ([0,L])$ (see Figure \ref{p1-Fig2}) such that $0\leq \chi_1(x)\leq 1$, $0\leq \chi_2(x)\leq 1$, for all $x\in[0,L]$ and
\begin{equation*}
\chi_1 (x)= 	\left \{ \begin{array}{lll}
1 \quad \text{if } \quad x \in [0,\alpha],\\
0 \quad \text{if } \quad x \in [\beta ,L],
\end{array}	\right. \,\text{and}\quad
\chi_2 (x) = 	\left \{ \begin{array}{lll}
0\quad \text{if } \quad x \in [0,\beta],\\
1 \quad \text{if } \quad x \in [\gamma ,L],
\end{array}	\right.
\end{equation*}
and set
$
\displaystyle\max_{x\in [0 ,L]}|\chi_1^{\prime} (x)|= M_{\chi_1^{\prime}}\quad \text{and}\quad  \displaystyle\max_{x\in [0,L]}|\chi_2^{\prime} (x)|= M_{\chi_2^{\prime}},\qquad\quad\,\,
$
\begin{figure}[h!]	
	\begin{center}
		\begin{tikzpicture}
		\draw[->](0,0)--(9,0);
		\draw[->](0,0)--(0,3);
		
		
		\node[black,below] at (0,0){\scalebox{0.75}{$0$}};
		\node at (0,0) [circle, scale=0.3, draw=black!80,fill=black!80] {};
		
		\node[black,below] at (2,0){\scalebox{0.75}{$\alpha$}};
		\node at (2,0) [circle, scale=0.3, draw=black!80,fill=black!80] {};
		
		\node[black,below] at (4,0){\scalebox{0.75}{$\beta$}};
		\node at (4,0) [circle, scale=0.3, draw=black!80,fill=black!80] {};

		\node[black,below] at (6,0){\scalebox{0.75}{$\gamma$}};
		\node at (6,0) [circle, scale=0.3, draw=black!80,fill=black!80] {};

		\node[black,below] at (8,0){\scalebox{0.75}{$L$}};
		\node at (8,0) [circle, scale=0.3, draw=black!80,fill=black!80] {};
		
		
		\node at (0,2) [circle, scale=0.3, draw=black!80,fill=black!80] {};
		
		\node[black,left] at (0,2){\scalebox{0.75}{$1$}};
		
		\node[black,right] at (9.5,3){\scalebox{0.75}{$\chi_1$}};
		\node[black,right] at (9.5,2.5){\scalebox{0.75}{$\chi_2$}};

		\draw[-,red](0,2)--(2,2);
		\draw [red] (2,2) to[out=0.40,in=180] (4,0) ;
		\draw[-,red](4,0)--(8,0);
		
		\draw[-,blue](0,0)--(4,0);
		\draw [blue] (4,0) to[out=0.40,in=180] (6,2) ;
		\draw[-,blue](6,2)--(8,2);
		
	
		
		
		\draw[-,red](9,3)--(9.5,3);
		\draw[-,blue](9,2.5)--(9.5,2.5);
		\end{tikzpicture}
	\end{center}
	\caption{Geometric description of the functions $\chi_1$ and $\chi_2$.}\label{p1-Fig2}
\end{figure}
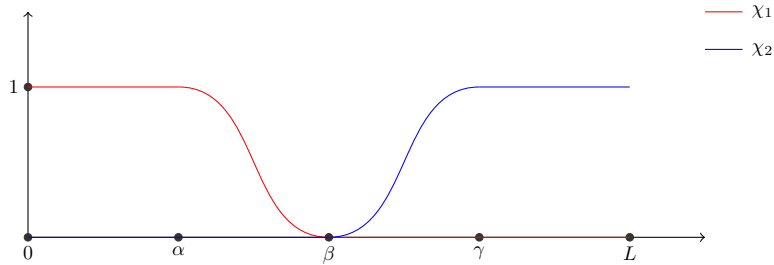
\begin{lem}\label{intcutoff1}{\rm
		Under the  hypothesis \eqref{H}, the solution $U^n =(u^n ,v^n ,y^n ,z^n ,\eta^n (\cdot,\rho) )^{\top}\in D(\AA )$ of  system \eqref{eq1ss}-\eqref{eq5ss} satisfies the following limits} \begin{eqnarray}
	\lim_{n\to \infty }\left(\int_{0}^{\alpha }|y^{n}_x |^2 dx +\int_{0}^{\alpha }|z^n  |^2 dx \right)=0,\label{intcutoff1eq}\\
	\lim_{n\to \infty }\left(a \int_{\gamma }^{L }|u^{n}_x |^2 dx + \int_{\gamma }^{L }|v^{n} |^2 dx +\int_{\gamma }^{L }|y^{n}_x |^2 dx +\int_{\gamma}^{L }|z^n   |^2 dx \right)=0.\label{intcutoff2}
	\end{eqnarray}
\end{lem}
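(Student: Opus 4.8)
The plan is to reduce each of the two limits to a linear second–order ODE estimate on the subinterval where the coupling $c(\cdot)$ (and, near $L$, also the Kelvin--Voigt term $b(\cdot)$) is switched off, after first pinning down the Cauchy data of the relevant component at one endpoint of that interval — the endpoint data being extracted, when needed, from the multiplier identity of Lemma~\ref{2hsn2hynx} with the cut-offs $\chi_1,\chi_2$.

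First I would treat \eqref{intcutoff1eq}. On $(0,\alpha)$ we have $c(\cdot)\equiv 0$, so \eqref{eq3ss}--\eqref{eq4ss} combine into the single scalar equation $y^n_{xx}+(\lambda^n)^2 y^n=-i\lambda^n f^{3,n}-f^{4,n}$ on $(0,\alpha)$, whose right-hand side tends to $0$ in $L^2(0,\alpha)$ (Poincar\'e for $f^{3,n}\in H^1_0(0,L)$, and $|\lambda^n|\le|\omega|$). For the data at $x=\alpha$: passing to the limit in \eqref{3.44} gives $z^n(\alpha)\to0$, hence \eqref{eq3ss} at $x=\alpha$ together with $|f^{3,n}(\alpha)|^2\le\alpha\|f^{3,n}_x\|_{L^2(0,L)}^2\to0$ yields $y^n(\alpha)\to0$, while passing to the limit in \eqref{ynxboundary} with the second limit of \eqref{limznynx} gives $y^n_x(\alpha)\to0$ (alternatively, $y^n_x(0)\to0$ is obtained directly by applying Lemma~\ref{2hsn2hynx} with $h=\chi_1$). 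Writing the ODE as a first-order system and using that its transition matrix is bounded on $[0,\alpha]$ — exactly the device used in the proof of Lemma~\ref{matrix} — one gets $\|y^n\|_{C([0,\alpha])}+\|y^n_x\|_{C([0,\alpha])}\to0$, so $\int_0^\alpha|y^n_x|^2\,dx\to0$, and then $z^n=i\lambda^n y^n-f^{3,n}\to0$ in $L^2(0,\alpha)$.

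For \eqref{intcutoff2}, on $(\gamma,L)$ both $b(\cdot)\equiv0$ and $c(\cdot)\equiv0$, so \eqref{eq1ss}--\eqref{eq4ss} decouple into $a u^n_{xx}+(\lambda^n)^2 u^n=-i\lambda^n f^{1,n}-f^{2,n}$ and $y^n_{xx}+(\lambda^n)^2 y^n=-i\lambda^n f^{3,n}-f^{4,n}$, both with right-hand side $\to0$ in $L^2(\gamma,L)$. The Dirichlet conditions give $u^n(L)=y^n(L)=0$; to obtain the missing derivative data I would apply Lemma~\ref{2hsn2hynx} with $h=\chi_2$. Since $\chi_2'$ is supported in $[\beta,\gamma]$, where $b\equiv0$ and hence $S_b=au^n_x$, the $\chi_2'$-integral and the two $c(\cdot)$-coupling terms are each bounded by $C\big(a\int_\beta^\gamma|u^n_x|^2+\int_\beta^\gamma|v^n|^2+\int_\beta^\gamma|z^n|^2+\int_\beta^\gamma|y^n_x|^2\big)\to0$ by Lemma~\ref{matrix}; the product $b\chi_2$ vanishes identically, so the $\lambda^n$-weighted term is $0$; the right-hand side of the identity is $O(\|F^n\|_{\HH})\to0$; and the boundary term equals $[\chi_2(a^{-1}|S_b|^2+|y^n_x|^2)]_0^L=a|u^n_x(L)|^2+|y^n_x(L)|^2$ (using $\chi_2(0)=0$, $\chi_2(L)=1$, $S_b(L)=au^n_x(L)$). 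Hence $u^n_x(L)\to0$ and $y^n_x(L)\to0$. With this full Cauchy data at $x=L$, the same first-order-system estimate yields $u^n,u^n_x,y^n,y^n_x\to0$ in $C([\gamma,L])$, hence in $L^2(\gamma,L)$, and then $v^n=i\lambda^n u^n-f^{1,n}$ and $z^n=i\lambda^n y^n-f^{3,n}$ tend to $0$ in $L^2(\gamma,L)$, which is \eqref{intcutoff2}.

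The main obstacle is the bookkeeping in the applications of Lemma~\ref{2hsn2hynx}: one must use the precise supports of $b(\cdot)$, $c(\cdot)$, $\chi_1$ and $\chi_2$ so that every term of the identity other than the wanted boundary value is genuinely controlled. In the $h=\chi_1$ variant this includes the $\lambda^n$-weighted term $\Re\{\tfrac{2i\lambda^n}{a}\int_0^\beta\chi_1 v^n(\kappa_1\overline{v^n_x}+\kappa_2\overline{\eta^n_x}(\cdot,1))\,dx\}$, which does not vanish and is tamed only because $\|v^n\|_{L^2(0,\beta)}$, $\|v^n_x\|_{L^2(0,\beta)}$ and $\|\eta^n_x(\cdot,1)\|_{L^2(0,\beta)}$ all tend to $0$ (Lemma~\ref{firstlemmass}) while $|\lambda^n|$ stays bounded. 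The ODE propagation steps are otherwise routine; the only point worth noting is that $\lambda^n\to\omega\neq0$ keeps $|\lambda^n|$ bounded away from $0$ for large $n$, so the transition matrices of the first-order systems are uniformly bounded on the compact intervals $[0,\alpha]$ and $[\gamma,L]$.
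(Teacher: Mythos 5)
Your proof is correct, but it follows a genuinely different route from the paper's. The paper obtains both limits in one stroke from the multiplier identity of Lemma~\ref{2hsn2hynx} with the weights $h=x\chi_1$ and $h=(x-L)\chi_2$: these weights vanish at both ends of $[0,L]$ (killing the boundary bracket) and have $h'\equiv 1$ on $(0,\alpha)$, respectively $(\gamma,L)$, so the wanted integrals appear directly on the left-hand side, while every remaining term is driven to zero by Lemmas~\ref{firstlemmass}, \ref{3rdlemmass} and \ref{matrix}. You instead first pin down Cauchy data at one endpoint — at $x=\alpha$ from the already established trace limits (\eqref{3.44}, and \eqref{ynxboundary} combined with the second limit of \eqref{limznynx}, plus \eqref{eq3ss} and $\lambda^n\to\omega\neq0$ to recover $y^n(\alpha)$), and at $x=L$ from the identity of Lemma~\ref{2hsn2hynx} with the unweighted cut-off $h=\chi_2$, whose boundary bracket reduces to $a|u^n_x(L)|^2+|y^n_x(L)|^2$ while all other terms vanish in the limit thanks to the supports of $b$, $c$, $\chi_2'$ and Lemma~\ref{matrix} — and then you propagate smallness across $(0,\alpha)$ and $(\gamma,L)$ with the first-order-system/variation-of-constants device that the paper itself uses only in Lemma~\ref{matrix} on $(\beta,\gamma)$. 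Both arguments are sound; the paper's is shorter and purely variational (one identity per interval, no ODE solving and no pointwise data at $\alpha$ or $L$ needed), whereas yours is conceptually uniform with Lemma~\ref{matrix} (every undamped region is handled by propagation from known Cauchy data) and for \eqref{intcutoff1eq} requires no new multiplier computation at all; the price is the extra bookkeeping of boundary traces and the uniform boundedness of the transition matrices, which is harmless here since $|\lambda^n|\le|\omega|$.
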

\begin{proof}
First, using the result of Lemma \ref{2hsn2hynx} with $h=x \chi_1 $, then using the definition of $b(\cdot)$, $c(\cdot)$, $S_b (u^n,v^n,\eta^n)$ and $\chi_1 $, we get
	\begin{equation*}
		\begin{array}{lll}
			\displaystyle  \int_{0}^{\alpha }|y^{n}_x |^2 dx+ \int_{0}^{\alpha }|z^n  |^2 dx=-\int_{0}^{\alpha }|v^{n} |^2 dx -a^{-1}\int_{0 }^{\alpha } |S_1 (u^n,v^n,\eta^n) |^2  dx\vspace{0.25cm}\\ 
		\displaystyle  -\int_{\alpha }^{\beta }\left(\chi_1 +x\chi_1^{\prime}\right)\left(a^{-1}\left|S_1(u^n,v^n,\eta^n) \right|^2 +|v^n |^2+|y^{n}_x |^2 +|z^n  |^2 \right)dx-\Re\left\{\frac{2c_0}{a} \int_{\alpha }^{\beta }  x\chi_1  z^n \overline{S_1 }(u^n,v^n,\eta^n)dx\right\}\vspace{0.25cm}\\
		\displaystyle +\,\Re\left\{2c_0\int_{\alpha}^{\beta}x\chi_1 v^n \overline{y^n_x}dx\right\}-  \Re \left\{ \frac{2i\la^n}{a}  \int_{0}^{\beta }x\chi_1 v^n \left(\kappa_1 \overline{v^{n}_x }+\kappa_2 \overline{\eta^{n}_x }(\cdot,1) \right)dx\right\}+\Re\left\{ \frac{2}{a}\int_{0}^{\beta }x\chi_1f^{2,n}\left(\kappa_1 \overline{v^{n}_x }+\kappa_2 \overline{\eta^{n}_x }(\cdot,1) \right)dx\right\} \vspace{0.25cm}\\
		\displaystyle +\,\Re\left\{2\intdx x\chi_1 \left(\overline{f^{1,n}_x }v^n +f^{2,n} \overline{u^{n}_x }+\overline{f^{3,n}_x }z^n +f^{4,n} \overline{y^{n}_x} \right)dx \right\}.
		\end{array}
		\end{equation*}Using Cauchy-Schwarz inequality in the above Equation and  the fact that $\|U^n \|_{\HH}=\|(u^n,v^n,y^n,z^n,\eta^n(\cdot,\rho))^{\top} \|_{\HH}=1$, we obtain
	\begin{equation*} \begin{array}{lll}
	\displaystyle	\int_{0}^{\alpha }|y^{n}_x  |^2 dx+\int_{0}^{\alpha }|z^n |^2 dx \leq \int_{0}^{\alpha}|v^n|^2dx+a^{-1}\int_{0}^{\alpha}|S_1(u^n,v^n,\eta^n)|^2dx \vspace{0.25cm} \\
	  \displaystyle +\, \left(1+ \beta M_{\chi_1^{\prime}}\right)\int_{\alpha }^{\beta }\left(a^{-1}|S_1 (u^n,v^n,\eta^n)|^2 +|v^n |^2 +|z^n |^2 +\left|y^{n}_x \right|^2 \right)dx +\frac{2c_0 \beta}{a}  \left( \int_{\alpha }^{\beta }|S_1(u^n,v^n,\eta^n) |^2 dx \right)^{\frac{1}{2}}\vspace{0.25cm}\\
	  \displaystyle +\,2c_0 \beta \left( \int_{\alpha }^{\beta }|v^n |^2 dx \right)^{\frac{1}{2}}+\frac{2\beta}{a} \left(|\la^n |+\|F^n \|_{\HH}\right) \left[\kappa_1 \left( \int_{0 }^{\beta } |v^{n}_x |^2 dx \right)^{\frac{1}{2}} +|\kappa_2 | \left( \int_{0 }^{\beta } |\eta^{n}_x (\cdot ,1 ) |^2 dx \right)^{\frac{1}{2}}\right]\vspace{0.25cm}\\ 
	  \displaystyle +\,4L \left(\frac{1}{\sqrt{a}}+1\right)\|F^n \|_{\HH}.
	\end{array}\end{equation*}Therefore, by passing to the limit in the above inequality and by using \eqref{contra1}, Lemmas \ref{firstlemmass}, \ref{3rdlemmass} and the fact that $\|F^n \|_{\HH} \to 0$, we obtain \eqref{intcutoff1eq}.
	On the other hand, using the result of Lemma \ref{2hsn2hynx} with $h=(x-L)\chi_2 $, then using Cauchy-Schwarz inequality and the fact that $\|U^n \|_{\HH}=1$, we get 
	\begin{equation*}\label{cutoffchi2ineq}
	\begin{array}{lll}
\hspace{0.75cm}	\displaystyle	a\int_{\gamma }^L |u^{n}_x |^2 +	\int_{\gamma }^L|v^n |^2 dx +	\int_{\gamma }^L |y^{n}_x |^2 dx +	\int_{\gamma }^L |z^n |^2 dx \vspace{0.25cm}\\\hspace{1cm}
	\leq \displaystyle \left(1+(L-\beta)M_{\chi_2^{\prime}}\right)\int_{\beta }^{\gamma } \left( a|u^{n}_x |^2 +|v^n |^2 +|y^{n}_x |^2 +\left|z^n \right|^2 \right)dx+2c_0 (L-\beta ) \left( \int_{\beta }^{\gamma }|v^n |^2 dx \right)^{\frac{1}{2}}\left( \int_{\beta }^{\gamma }|y^{n}_x |^2 dx \right)^{\frac{1}{2}}\vspace{0.25cm}\\\hspace{1.25cm}
	 \displaystyle +\, 2c_0 (L-\beta ) \left( \int_{\beta }^{\gamma }|z^n |^2 dx \right)^{\frac{1}{2}}\left( \int_{\beta }^{\gamma }|u^{n}_x |^2 dx \right)^{\frac{1}{2}}+4L\left({\frac{1}{\sqrt{a}}}+1\right)\|F^n \|_{\HH}.
	\end{array}
	\end{equation*}Finally, passing to the limit in the above inequality, then using Lemma \ref{matrix} and the fact that $\|F^n \|_{\HH} \to 0$, we obtain \eqref{intcutoff2}. The proof is thus complete.
\end{proof}\\\linebreak
\noindent \textbf{Proof of Proposition \ref{pol-prop1}.} From Lemmas \ref{firstlemmass}-\ref{intcutoff1}, we obtain $\|U^n\|_{\HH}\to 0$ as $n\to\infty$ which contradicts $\|U^n\|_{\mathcal{H}}=1$. Thus, \eqref{Condition-1-pol} is holds true. The proof is thus complete.\xqed{$\square$}
\\\linebreak
\noindent \textbf{Proof of Theorem \ref{p1-strongthm2}.} From proposition \ref{pol-prop1}, we have $i\R\subset \rho (\AA)$ and consequently $\sigma (\AA)\cap i\R=\emptyset$. Therefore, according to Theorem \ref{App-Theorem-A.2}, we get that the $C_0-$semigroup of contraction $(e^{t\AA})_{t\geq0}$ is strongly stable. The proof is thus complete.\xqed{$\square$}

\section{Polynomial Stability}\label{Pol-Sta-Sec}
\noindent In this section, we will prove the polynomial stability of  system \eqref{sysorg0}-\eqref{initialcon}. The main result of this section is the following theorem. 
\begin{Thm}\label{polthm}{\rm
Under the  hypothesis \eqref{H}, for all $U_0\in D(\mathcal{A})$, there exists a constant $C>0$ independent of $U_0$ such that the energy of system \eqref{sysorg0}-\eqref{initialcon} satisfies the following estimation 
$$
E(t)\leq \frac{C}{t}\|U_0\|^2_{D(\AA)},\quad \forall\, t>0.
$$}
\end{Thm}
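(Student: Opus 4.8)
The plan is to use the frequency domain characterization of polynomial stability due to Borichev--Tomilov: since $\AA$ generates a bounded $C_0$-semigroup and, by Proposition \ref{pol-prop1}, $i\R\subset\rho(\AA)$, it suffices to prove the resolvent estimate
$$
\sup_{\lambda\in\R}\frac{1}{|\lambda|^2}\left\|(i\lambda I-\AA)^{-1}\right\|_{\LL(\HH)}<\infty,
$$
equivalently that there exists $C>0$ such that for every $\lambda\in\R$ with $|\lambda|$ large, every $U=(u,v,y,z,\eta(\cdot,\rho))^{\top}\in D(\AA)$ and every $F=(f^1,\dots,f^5)^{\top}\in\HH$ satisfying $(i\lambda I-\AA)U=F$, one has $\|U\|_{\HH}\le C|\lambda|^2\|F\|_{\HH}$. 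I would argue by contradiction in exactly the style already set up in Section \ref{sec3}: suppose the estimate fails, so there is a sequence $\lambda^n\to\pm\infty$, $U^n\in D(\AA)$ with $\|U^n\|_{\HH}=1$, and $F^n:=(i\lambda^n I-\AA)U^n$ with $|\lambda^n|^2\|F^n\|_{\HH}\to 0$; writing out the five componentwise equations \eqref{eq1ss}--\eqref{eq5ss} (now with $|\lambda^n|^2 f^{j,n}\to 0$ in the relevant norms) and deriving $\|U^n\|_{\HH}\to 0$, a contradiction.

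The key steps, in order: First, taking the inner product of $(i\lambda^n I-\AA)U^n=F^n$ with $U^n$ and using the dissipativeness identity \eqref{dissiparive} gives $\int_0^\beta|v^n_x|^2\,dx\le\frac{1}{\kappa_1-|\kappa_2|}\|F^n\|_{\HH}\|U^n\|_{\HH}$, so $\int_0^\beta|v^n_x|^2\,dx=O(\|F^n\|_{\HH})=o(|\lambda^n|^{-2})$; then, reproducing the chain of estimates in Lemma \ref{firstlemmass} but tracking powers of $\lambda^n$, one obtains the sharper statements that $|\lambda^n|^2\int_0^\beta|u^n_x|^2\,dx$, $|\lambda^n|^2\int_0^\beta|v^n|^2\,dx$, $\int_0^\beta\int_0^1|\eta^n_x(\cdot,\rho)|^2\,d\rho\,dx$, $\int_0^\beta|\eta^n_x(\cdot,1)|^2\,dx$ and $\int_0^\beta|S_1(u^n,v^n,\eta^n)|^2\,dx$ all tend to zero. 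Second, I would redo the boundary-term lemmas and Lemma \ref{3rdlemmass}, now using that $|\lambda^n|$ is unbounded, to get $\int_\alpha^\beta|z^n|^2\,dx\to 0$ and $\int_\alpha^\beta|y^n_x|^2\,dx\to 0$ together with $v^n(\alpha),v^n(\beta),z^n(\alpha),z^n(\beta)\to 0$ and $y^n_x(\alpha),y^n_x(\beta)\to 0$ and $(S_1(u^n,v^n,\eta^n))(\alpha),(S_1(u^n,v^n,\eta^n))(\beta^-)\to 0$; the argument is as in Section \ref{sec3} but one must be careful that factors like $|\lambda^n|$ multiplying $\|F^n\|_{\HH}$ are harmless because $\|F^n\|_{\HH}=o(|\lambda^n|^{-2})$. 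Third, on $(\beta,\gamma)$ I would use the ODE-system representation \eqref{de}--\eqref{solde}: the matrix $B^n$ now has entries of size $|\lambda^n|$, so $e^{B^n(x-\beta)}$ is no longer uniformly bounded, and this is where a genuinely new multiplier/energy argument on $(\beta,\gamma)$ is needed rather than the crude exponential bound used for strong stability. Fourth, the cut-off multiplier identity of Lemma \ref{2hsn2hynx} with $h=x\chi_1$ and $h=(x-L)\chi_2$ propagates smallness to $(0,\alpha)$ and $(\gamma,L)$, and a further global multiplier $h\equiv 1$ (or $h=x$) controls $\int_0^L(|v^n|^2+|z^n|^2+|y^n_x|^2+a^{-1}|S_b|^2)\,dx$, yielding $\|U^n\|_{\HH}\to 0$ and the contradiction.

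The main obstacle I anticipate is the coupling region $(\beta,\gamma)$ and the transition across $x=\beta$: there is no damping there, the coupling coefficient $c(x)=c_0$ is only a nonsmooth characteristic function, and $S_b$ jumps in character at $\beta$ (equal to $S_1$ on $(0,\beta)$ and to $au_x$ beyond). In the strong-stability proof one could afford the lossy bound "$e^{B^n(x-\beta)}$ bounded", but for the $t^{-1}$ rate one must instead run a quantitative multiplier argument: multiply \eqref{eq2ss} (restricted to $(\beta,\gamma)$, where it reads $i\lambda^n v^n-au^n_{xx}+c_0 z^n=f^{2,n}$) by a suitable multiplier such as $\overline{u^n_x}$ or $p(x)\overline{u^n_x}$, and \eqref{eq4ss} by $p(x)\overline{y^n_x}$, integrate by parts, and absorb the cross terms $c_0\,v^n\overline{y^n_x}$, $c_0\,z^n\overline{u^n_x}$ using Young's inequality together with the already-established smallness of $v^n,z^n$ near $\beta$ and on $(\alpha,\beta)$; the boundary contributions at $x=\beta$ are controlled by \eqref{gonf1}--\eqref{gonf2}-type limits proved exactly as in Lemma \ref{matrix}. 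Carefully bookkeeping the powers of $\lambda^n$ so that every error term is $o(1)$ — in particular checking that the worst term in the resolvent identity, which is of order $|\lambda^n|\,\|F^n\|_{\HH}$, is indeed $o(1)$ thanks to $|\lambda^n|^2\|F^n\|_{\HH}\to 0$ — is the delicate part, but it is routine once the multiplier on $(\beta,\gamma)$ is correctly chosen.
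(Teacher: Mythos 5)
Your overall framework (Borichev--Tomilov, contradiction with $(\la^n)^2(i\la^n I-\AA)U^n\to0$, dissipation giving $\int_0^\beta|v_x|^2=o(\la^{-2})$, then cut-off multipliers to spread smallness) is the same as the paper's, and your first step reproduces Lemma \ref{firstlemmaps}. The genuine gap is in your treatment of the undamped coupling region $(\beta,\gamma)$, which you yourself flag as the crux. You propose to multiply by $\overline{u_x}$-type multipliers there and to ``absorb the cross terms $c_0\,v\overline{y_x}$, $c_0\,z\overline{u_x}$ using Young's inequality together with the already-established smallness of $v,z$ near $\beta$ and on $(\alpha,\beta)$.'' This does not work: those cross terms are integrals over $(\beta,\gamma)$, where at that stage $v,z,u_x,y_x$ are only known to be bounded (norm $1$), not small, and Young's inequality produces $c_0\,|h|\,(|v|^2+|y_x|^2)$-type terms that can only be absorbed into the left-hand side if $c_0(L-\beta)$ is small, which is not assumed. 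The paper's resolution (Lemma \ref{lastestimation-pol}, equations \eqref{4.61}--\eqref{4.63}) is different in kind: using the resolvent relations \eqref{eq1ps} and \eqref{eq3ps} one writes $\overline{u_x}=i\la^{-1}\overline{v_x}+i\la^{-3}\overline{f^1_x}$ and $\overline{y_x}=i\la^{-1}\overline{z_x}+i\la^{-3}\overline{f^3_x}$, substitutes into the two cross terms coming from the $h_2=(x-L)\theta_5$ identity of Lemma \ref{2hsn2hynxps}, and integrates by parts; the two terms then cancel to leading order, leaving $\frac{2c_0}{\la}\Re\{i\int z\overline{v}\}$ plus boundary terms, all of which are $O(|\la|^{-1})=o(1)$ because the pointwise values $v,z$ at $\beta-3\varepsilon$ and $\gamma$ are merely $O(1)$ (Lemma \ref{boundariesps}). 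No smallness of the solution on $(\beta,\gamma)$ and no smallness of $c_0$ is needed; the gain is the explicit factor $\la^{-1}$.

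A secondary inaccuracy: you claim $o(1)$ boundary limits at $\alpha$, $\beta$ for $v,z,y_x,S_1$ ``proved exactly as in Lemma \ref{matrix}.'' Those proofs used $\la^n\to\omega$ bounded; with $|\la^n|\to\infty$ the estimates \eqref{zboundary}--\eqref{ynxboundary} contain terms like $2|\la^n|M_g(\int_\alpha^\beta|z^n|^2)^{1/2}$ which are not $o(1)$ when you only know $\int_\alpha^\beta|z^n|^2=o(1)$. The paper deliberately settles for $O(1)$ pointwise bounds at $\beta-3\varepsilon$ and $\gamma$ (Lemma \ref{boundariesps}) and compensates with the extra $\la^{-1}$; also note that the matrix-exponential/ODE step of Lemma \ref{matrix} is entirely absent from the polynomial proof, as you correctly anticipated it must be. So your proposal has the right architecture, but the decisive cancellation mechanism on $(\beta,\gamma)$ is missing and the mechanism you substitute for it would fail.
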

\noindent According to Theorem \ref{bt}, to prove Theorem \ref{polthm}, we will prove  the following two conditions
\begin{equation}\label{4.1}
i\R \subset \rho (\AA)
\end{equation}and
\begin{equation}\label{Condition-2-pol}
\sup_{\la\in\R}\frac{1}{|\la|^2 }\left\|(i\la I-\mathcal{A})^{-1}\right\|_{\mathcal{L}(\mathcal{\HH})}<+\infty.
\end{equation}

\noindent From proposition \ref{pol-prop1}, we obtain condition \eqref{4.1}. Next, we will prove condition \eqref{Condition-2-pol} by a contradiction argument. For this purpose,
suppose that \eqref{Condition-2-pol} is false, then there exists $\left\{(\la^n,U^n:=(u^n,v^n,y^n,z^n,\eta^n(\cdot,\rho))^{\top})\right\}_{n\geq1}\subset \R^{\ast}\times D(\mathcal{A})$ with
\begin{equation}\label{contra-pol2}
|\la^n|\to\infty \quad \text{and}\quad \|U^n\|_{\mathcal{H}}=\left\|(u^n ,v^n ,y^n ,z^n , \eta^n (\cdot,\rho))^{\top}\right\|_{\HH}=1,
\end{equation}
such that  
\begin{equation}\label{eq0ps}
	(\la^n )^2 (i\la^n I-\AA )U^n =F^n:=(f^{1,n},f^{2,n},f^{3,n},f^{4,n},f^{5,n}(\cdot,\rho))^{\top}  \to 0  \quad \text{in}\quad \HH.
\end{equation} 
For simplicity, we drop the index $n$. Equivalently, from \eqref{eq0ps}, we have 
\begin{eqnarray}
i\la u-v&=&\la^{-2}f^1  \to 0 \quad\qquad \text{in}\quad H^{1}_0 (0,L),\label{eq1ps}\\
i\la v-(S_b (u,v,\eta))_x   +c(\cdot)z&=&\la^{-2}f^2 \to 0 \quad\qquad \text{in}\quad L^2(0,L),\label{eq2ps}\\
i\la y-z&=&\la^{-2} f^3 \to 0 \quad\qquad \text{in}\quad H^{1}_0 (0,L),\label{eq3ps}\\
i\la z-y_{xx}-c(\cdot)v&=&\la^{-2}f^4  \to 0 \quad\qquad \text{in}\quad L^2 (0,L),\label{eq4ps}\\
i\la \eta (.,\rho)+\tau^{-1}\eta_{\rho}(.,\rho)&=&\la^{-2}f^5 (.,\rho) \to 0 \quad \text{in}\quad  \mathcal{W}.\label{eq5ps}
\end{eqnarray}
Here we will check the condition \eqref{Condition-2-pol} by finding a contradiction with \eqref{contra-pol2} such as $\left\|U\right\|_{\HH}=o(1)$.  For clarity, we divide the proof into several Lemmas.

\begin{lem}{\rm\label{firstlemmaps}
Under the  hypothesis \eqref{H},  the solution $U =(u ,v ,y ,z ,\eta (\cdot,\rho) )^{\top}\in D(\AA )$ of  system \eqref{eq1ps}-\eqref{eq5ps} satisfies the following estimations
	\begin{eqnarray}
 \intdnb |v_x |^2 dx =o(\la^{-2}), \label{vnxo}\\
 \intdnb |u_x |^2 dx =o(\la^{-4}), \label{unxo}\\
 	 \intdnb \int_{0}^{1} |\eta_x (\cdot,\rho) |^2 d\rho dx =o(\la^{-2}), \label{etao}\\
 \intdnb  |\eta_x (\cdot,1)|^2  dx =o(\la^{-2}),\label{eta1o}\\
 \intdnb |S_1(u,v,\eta) |^2 dx =o(\la^{-2}).\label{So}
	\end{eqnarray}}\end{lem}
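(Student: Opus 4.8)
The plan is to run the same sequence of estimates as in Lemma~\ref{firstlemmass}, but to track carefully the powers of $\la$ appearing at each step, exploiting that the right-hand side of the resolvent identity \eqref{eq0ps} is $\la^{-2}F$ rather than $F$. First I would take the $\HH$-inner product of \eqref{eq0ps} with $U$ and take real parts. Since $\|U\|_{\HH}^2$ is real, $\Re\bigl(i\la\|U\|_{\HH}^2\bigr)=0$, so this reduces to $-\la^2\Re(\AA U,U)_{\HH}=\Re(F,U)_{\HH}$. Combining with the dissipation estimate \eqref{dissiparive}, the hypothesis \eqref{H} (which gives $\kappa_1-|\kappa_2|>0$), and the normalization $\|U\|_{\HH}=1$, I get
\[
(\kappa_1-|\kappa_2|)\,\la^2\intdnb|v_x|^2\,dx\;\le\;-\la^2\Re(\AA U,U)_{\HH}\;=\;\Re(F,U)_{\HH}\;\le\;\|F\|_{\HH}\,\|U\|_{\HH}\;=\;o(1),
\]
which is exactly \eqref{vnxo}.

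Next, from \eqref{eq1ps} I would write $u_x=-i\la^{-1}v_x-i\la^{-3}f^1_x$, so that $\intdnb|u_x|^2\,dx\le 2\la^{-2}\intdnb|v_x|^2\,dx+2\la^{-6}\intdnb|f^1_x|^2\,dx$, and bound $\intdnb|f^1_x|^2\,dx\le a^{-1}\|F\|_{\HH}^2$. The first term on the right is $o(\la^{-4})$ \emph{precisely because} one feeds back the sharpened bound \eqref{vnxo} (not the crude $\|U\|_{\HH}=1$), while the second is $O(\la^{-6})=o(\la^{-4})$; this gives \eqref{unxo}. I consider this the only delicate point: the chain \eqref{vnxo}$\Rightarrow$\eqref{unxo}$\Rightarrow$\eqref{So} closes only if the sharp rates are carried through and one never replaces an $o(\la^{-2})$ or $o(\la^{-4})$ quantity by a crude $O(1)$ bound when it is multiplied by a negative power of $\la$.

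Then I would solve the transport equation \eqref{eq5ps} with the boundary condition $\eta(\cdot,0)=v(\cdot)$ explicitly, obtaining
\[
\eta(x,\rho)=v(x)\,e^{-i\la\tau\rho}+\tau\la^{-2}\int_{0}^{\rho}e^{i\la\tau(s-\rho)}f^5(x,s)\,ds,\qquad (x,\rho)\in(0,L)\times(0,1),
\]
differentiate in $x$, square, integrate over $(0,\beta)\times(0,1)$, and apply Cauchy--Schwarz to the $s$-integral together with $\intdnb\int_0^1|f^5_x(\cdot,s)|^2\,ds\,dx\le\tau^{-1}|\kappa_2|^{-1}\|F\|_{\HH}^2$ (this last inequality coming from the weight $\tau|\kappa_2|$ in the $\eta$-component of the $\HH$-norm). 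This controls $\intdnb\int_0^1|\eta_x(\cdot,\rho)|^2\,d\rho\,dx$ by $2\intdnb|v_x|^2\,dx+O(\la^{-4})=o(\la^{-2})$, i.e. \eqref{etao}, and specializing to $\rho=1$ gives \eqref{eta1o} by the identical computation.

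Finally, since $S_1(u,v,\eta)=au_x+\kappa_1 v_x+\kappa_2\eta_x(\cdot,1)$, the elementary bound $|S_1|^2\le 3a^2|u_x|^2+3\kappa_1^2|v_x|^2+3\kappa_2^2|\eta_x(\cdot,1)|^2$, integrated over $(0,\beta)$ and combined with \eqref{vnxo}, \eqref{unxo}, \eqref{eta1o}, yields \eqref{So}. I do not expect a genuine obstacle here: the lemma is a quantitative refinement of Lemma~\ref{firstlemmass}, and essentially everything reduces to the first estimate obtained from dissipativity plus careful bookkeeping of the $\la$-powers.
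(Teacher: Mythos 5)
Your proposal is correct and follows essentially the same route as the paper: the dissipativity estimate \eqref{dissiparive} applied to the inner product of \eqref{eq0ps} with $U$ gives \eqref{vnxo}, then \eqref{eq1ps}, the explicit Duhamel formula for $\eta$ from \eqref{eq5ps} with $\eta(\cdot,0)=v$, and the pointwise bound on $S_1$ yield \eqref{unxo}--\eqref{So}, with the $\la$-powers tracked exactly as you describe. The only (harmless) difference is cosmetic: you keep the sharper $\la^{-6}$ coefficient on the $f^1_x$ term where the paper uses $\la^{-4}$, both being $o(\la^{-4})$.
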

\begin{proof} 	First, taking the inner product of \eqref{eq0ps} with $U$ in $\HH$ and  using \eqref{dissiparive} with the help of hypothesis \eqref{H}, we obtain
		\begin{equation}\label{4.12}\intdnb |v_x |^2 dx \leq -\frac{1}{\kappa_1 -|\kappa_2 |}\Re (\AA U  ,U )_{\HH}=\frac{\la^{-2}}{\kappa_1 -|\kappa_2 |}\Re (F  ,U )_{\HH} \leq \frac{\la^{-2}}{\kappa_1-|\kappa_2 |}\|F \|_{\HH} \|U \|_{\HH}.  \end{equation}Thus, from \eqref{4.12} and the fact that $ \|F\|_{\HH}=o(1)$ and $\|U\|_{\HH}=1$, we obtain \eqref{vnxo}. Now, from  \eqref{eq1ps}, we deduce that
		\begin{equation}\label{3.162}	
		\displaystyle \intdnb |u_x |^2 dx \leq\displaystyle 2\la^{-2} \intdnb |v_x |^2 dx +2\la^{-4}\intdnb |f^{1}_x |^2 dx \leq\displaystyle 2\la^{-2} \intdnb |v_x |^2 dx +2\la^{-4}\intdx |f^{1}_x |^2 dx.\end{equation}
		Therefore, from \eqref{vnxo}, \eqref{3.162} and the fact that $\displaystyle\|f^{1}_x \|_{L^2 (0,L)} =o(1)$, we obtain \eqref{unxo}.
		Next, from \eqref{eq5ps} and the fact that  $\eta (\cdot,0)=v (\cdot)$ , we get 
		\begin{equation}\label{etanps}
		\eta (x,\rho)=ve^{-i\la \tau \rho}+\tau\la^{-2}\int_{0}^{\rho}e^{i\la \tau (s-\rho)}f^{5}(x,s)ds, \quad (x,\rho)\in (0,L )\times (0,1).
		\end{equation}
		From  \eqref{etanps}, we deduce that \begin{equation}\label{3.182}
		\displaystyle\intdnb \int_{0}^{1} |\eta_x (\cdot,\rho)|^2 d\rho dx  \leq 
	 \displaystyle 2\intdnb |v_x   |^2 dx +\tau^{2}\la^{-4}\intdnb \int_{0}^{1} |f^{5}_x (\cdot,s)|^2 ds dx. 
	\end{equation}
		Thus, from \eqref{vnxo}, \eqref{3.182}  and the fact that $f^5(\cdot,\rho)\to 0$ in $\mathcal{W}$, we obtain \eqref{etao}.
		On the other hand,	from  \eqref{etanps}, we have \begin{equation*}
		\eta_{x} (\cdot,1)=v_{x}e^{-i\la \tau}+\tau\la^{-2} \int_{0}^{1}e^{i\la \tau (s-1)}f^{5}_{x}(\cdot,s)ds,
		\end{equation*} consequently, similar to the previous proof, we obtain \eqref{eta1o}.
	Next, it is clear to see that 
	\begin{equation*}\label{itsclear2}
		\intdnb |S_1 (u,v,\eta)|^2 dx =\intdnb |au_x +\kappa_{1}v_x +\kappa_{2} \eta_x (\cdot,1)|^2 dx\leq 3a^2 \intdnb|u_{x}|^2 dx +3\kappa_{1}^2 \intdnb |v_{x}|^2 dx +3\kappa_{2}^2 \intdnb |\eta_{x}(\cdot,1)|^2 dx.
		\end{equation*}
		Finally, from \eqref{vnxo}, \eqref{unxo}, \eqref{eta1o} and the above estimation, we obtain \eqref{So}. The proof is thus complete.
\end{proof}

\begin{figure}[h]
	\begin{center}
		\begin{tikzpicture}
		\draw[->](1,0)--(12,0);
		\draw[->](1,0)--(1,3);
	
		
		\node[black,below] at (1,0){\scalebox{0.75}{$0$}};
		\node at (1,0) [circle, scale=0.3, draw=black!80,fill=black!80] {};
		\node[black,below] at (2,0){\scalebox{0.75}{$\varepsilon$}};
		\node at (2,0) [circle, scale=0.3, draw=black!80,fill=black!80] {};
		\node[black,below] at (3,0){\scalebox{0.75}{$2\varepsilon $}};
		\node at (3,0) [circle, scale=0.3, draw=black!80,fill=black!80] {};

		\node[black,below] at (4,0){\scalebox{0.75}{$\alpha $}};
		\node at (4,0) [circle, scale=0.3, draw=black!80,fill=black!80] {};

		\node[black,below] at (5,0){\scalebox{0.75}{$\alpha +\varepsilon$}};
		\node at (5,0) [circle, scale=0.3, draw=black!80,fill=black!80] {};
		
		\node[black,below] at (6,0){\scalebox{0.75}{$\beta-3\varepsilon$}};
		\node at (6,0) [circle, scale=0.3, draw=black!80,fill=black!80] {};
		
		\node[black,below] at (7,0){\scalebox{0.75}{$\beta-2\varepsilon$}};
		\node at (7,0) [circle, scale=0.3, draw=black!80,fill=black!80] {};
		
		\node[black,below] at (8,0){\scalebox{0.75}{$\beta-\varepsilon$}};
		\node at (8,0) [circle, scale=0.3, draw=black!80,fill=black!80] {};
		
		\node[black,below] at (9,0){\scalebox{0.75}{$\beta$}};
		\node at (9,0) [circle, scale=0.3, draw=black!80,fill=black!80] {};
		
		\node[black,below] at (10,0){\scalebox{0.75}{$\gamma$}};
		\node at (10,0) [circle, scale=0.3, draw=black!80,fill=black!80] {};
		
		\node[black,below] at (11,0){\scalebox{0.75}{$L$}};
		\node at (11,0) [circle, scale=0.3, draw=black!80,fill=black!80] {};

		
		\node at (1,2.022) [circle, scale=0.3, draw=black!80,fill=black!80] {};
		
		\node[black,left] at (1,2.022){\scalebox{0.75}{$1$}};

		\node[black,right] at (12.5,3){\scalebox{0.75}{$\theta_1$}};
		\node[black,right] at (12.5,2.5){\scalebox{0.75}{$\theta_2$}};
		\node[black,right] at (12.5,2 ){\scalebox{0.75}{$\theta_3$}};

		\draw [blue] (1,0.022) to[out=0.40,in=180] (2,2.022) ;
		\draw[-,blue](2,2.022)--(8,2.022);
		\draw [blue] (8,2.022) to[out=0.40,in=180] (9,0.022) ;
		\draw[-,blue](9,0.022)--(11,0.022);
		\draw[-,red](1,0.011)--(2,0.011);
		\draw [red] (2,0.011) to[out=0.40,in=180] (3,2.011) ;
		\draw[-,red](3,2.011)--(7,2.011);
		\draw [red] (7,2.011) to[out=0.40,in=180] (8,0.011) ;
		\draw[-,red](8,0.011)--(11,0.011);
		\draw[-,green](1,0)--(4,0);
		\draw [green] (4,0) to[out=0.40,in=180] (5,2) ;
		\draw[-,green](5,2)--(6,2);
		\draw [green] (6,2) to[out=0.40,in=180] (7,0) ;
		\draw[-,green](7,0)--(11,0);
		
		
		
		\draw[-,blue](12,3)--(12.5,3);
		\draw[-,red](12,2.5)--(12.5,2.5);
		\draw[-,green](12,2)--(12.5,2);
		\end{tikzpicture}\end{center}
	\caption{Geometric description of the functions $\theta_1$, $\theta_2$ and $\theta_3$.}\label{Fig3}
\end{figure}
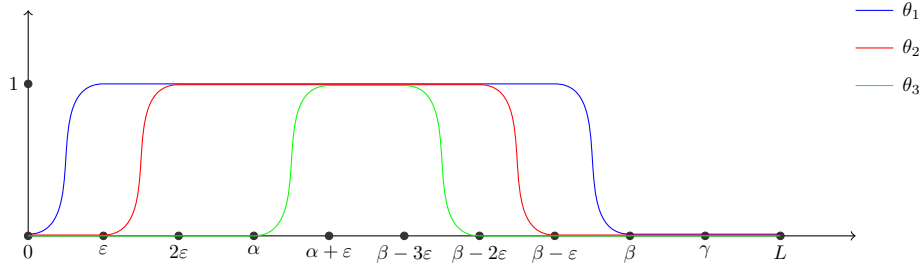

\begin{lem}{\rm\label{intvpslemma}
		Let $0<\varepsilon <\min\left(\frac{\alpha}{2},\frac{\beta -\alpha }{4} \right)$.  Under the  hypothesis \eqref{H}, the solution $U =(u ,v ,y ,z ,\eta (\cdot, \rho) )^{\top}\in D(\AA )$ of  system \eqref{eq1ps}-\eqref{eq5ps} satisfies the following estimation}
	\begin{eqnarray}
		\int_{\varepsilon}^{\beta -\varepsilon} |v|^2 dx =o(1).\label{intvps}
	\end{eqnarray}
\end{lem}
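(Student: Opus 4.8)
The plan is to bound $|v|^2$ over $(\varepsilon,\beta-\varepsilon)$ by multiplying against a cut-off supported in $[0,\beta]$, and then to convert the resulting $\lambda$-weighted integral into quantities already controlled by Lemma \ref{firstlemmaps}.

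Recall the cut-off $\theta_1$ of Figure \ref{Fig3}: $\theta_1\in C^1([0,L])$, $0\le\theta_1\le1$, $\theta_1\equiv1$ on $[\varepsilon,\beta-\varepsilon]$, $\theta_1\equiv0$ on $[\beta,L]$, and $\theta_1(0)=0$; put $M_{\theta_1'}=\max_{[0,L]}|\theta_1'|$. Since $\theta_1\ge0$ and $\theta_1\equiv1$ on $[\varepsilon,\beta-\varepsilon]$, it suffices to show $\int_0^\beta\theta_1|v|^2\,dx=o(1)$. Using the conjugate of \eqref{eq1ps}, i.e. $\overline v=-i\lambda\overline u-\lambda^{-2}\overline{f^1}$, I would write
\[
\int_0^\beta\theta_1|v|^2\,dx=-i\lambda\int_0^\beta\theta_1 v\,\overline u\,dx-\lambda^{-2}\int_0^\beta\theta_1 v\,\overline{f^1}\,dx ,
\]
where the second term is $o(\lambda^{-2})$ by Cauchy--Schwarz, as $\|v\|_{L^2(0,L)}\le\|U\|_{\HH}=1$ and $\|f^1\|_{L^2(0,L)}=o(1)$.

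For the first term I would substitute $i\lambda v=(S_b(u,v,\eta))_x-c(\cdot)z+\lambda^{-2}f^2$ from \eqref{eq2ps}, which gives
\[
-i\lambda\int_0^\beta\theta_1 v\,\overline u\,dx=-\int_0^\beta\theta_1\,\overline u\,(S_b(u,v,\eta))_x\,dx+\int_0^\beta\theta_1\,c(\cdot)\,\overline u\,z\,dx-\lambda^{-2}\int_0^\beta\theta_1\,\overline u\,f^2\,dx .
\]
Because $S_b(u,v,\eta)\in H^1(0,L)\subset C([0,L])$ (this regularity is part of $D(\AA)$) and $\theta_1(\beta)=0$, $u(0)=0$, an integration by parts in the first integral removes both boundary terms and leaves $\int_0^\beta(\theta_1'\,\overline u+\theta_1\,\overline{u_x})\,S_b(u,v,\eta)\,dx$; moreover $S_b(u,v,\eta)=S_1(u,v,\eta)$ on $(0,\beta)$.

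It then remains to estimate the four integrals by Cauchy--Schwarz. By Poincaré (using $u(0)=0$) together with \eqref{unxo} one has $\|u_x\|_{L^2(0,\beta)}=o(\lambda^{-2})$ and $\|u\|_{L^2(0,\beta)}=o(\lambda^{-2})$; by \eqref{So}, $\|S_1(u,v,\eta)\|_{L^2(0,\beta)}=o(\lambda^{-1})$; and $\|z\|_{L^2(0,L)}\le1$, $\|f^2\|_{L^2(0,L)}=o(1)$. Thus the two integrals against $S_1(u,v,\eta)$ are $o(\lambda^{-3})$, the integral involving $c(\cdot)$ is $o(\lambda^{-2})$, and the one involving $f^2$ is $o(\lambda^{-2})$; combined with the $o(\lambda^{-2})$ already found this yields $\int_0^\beta\theta_1|v|^2\,dx=o(\lambda^{-2})=o(1)$, proving \eqref{intvps}. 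The main point requiring care is the legitimacy of the integration by parts and the vanishing of its boundary terms — which is exactly why $\theta_1$ is taken to vanish at $\beta$ and to be supported in $[0,\beta]$, and why one invokes $S_b(u,v,\eta)\in H^1(0,L)$ from $D(\AA)$; everything else is routine $\lambda$-bookkeeping.
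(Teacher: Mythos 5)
Your proposal is correct, and it takes a genuinely different route from the paper. The paper proves \eqref{intvps} by multiplying \eqref{eq2ps} by $\lambda^{-1}\theta_1\overline{v}$, taking imaginary parts, integrating by parts (using $v(0)=v(L)=0$), and estimating: there the $S_b$-term is $o(\lambda^{-2})$ via \eqref{vnxo} and \eqref{So}, but the coupling term $\Im\{\lambda^{-1}\int c(\cdot)\theta_1 z\overline{v}\,dx\}$ is only bounded by $O(|\lambda|^{-1})$, so the paper's conclusion is just $o(1)$. You instead eliminate $v$ through \eqref{eq1ps} ($\overline{v}=-i\lambda\overline{u}-\lambda^{-2}\overline{f^1}$) and then substitute \eqref{eq2ps} for $i\lambda v$, so the troublesome coupling term becomes $\int\theta_1 c(\cdot)\overline{u}z\,dx$, which you control by the Poincar\'e-type bound $\|u\|_{L^2(0,\beta)}\lesssim\|u_x\|_{L^2(0,\beta)}=o(\lambda^{-2})$ (valid since $u(0)=0$) coming from \eqref{unxo}; the integration by parts is legitimate because $S_b(u,v,\eta)\in H^1(0,L)$ for $U\in D(\AA)$ and the boundary terms vanish ($\theta_1(\beta)=0$, $u(0)=0$), exactly as you say. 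The payoff is a strictly stronger estimate, $\int_\varepsilon^{\beta-\varepsilon}|v|^2dx=o(\lambda^{-2})$ rather than $o(1)$, at the cost of invoking the continuity of $S_b$ across $(0,L)$ and the pointwise Poincar\'e argument on $(0,\beta)$. One further remark: the same observation shows the lemma admits an even shorter proof with no multiplier at all — since $v\in H^1_0(0,L)$ vanishes at $x=0$, the one-sided Poincar\'e inequality on $(0,\beta)$ (the paper uses precisely this in \eqref{gpoincare} in the strong-stability section) combined with \eqref{vnxo} gives $\int_0^\beta|v|^2dx\leq C\int_0^\beta|v_x|^2dx=o(\lambda^{-2})$ directly; your argument is a slightly longer path to the same strengthened conclusion, but every step in it is sound.
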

\begin{proof}
	First, we fix a cut-off function $\theta_1 \in C^1 ([0,L])$ (see Figure \ref{Fig3}) such that $0\leq \theta_1 (x)\leq 1$, for all $x\in[0,L]$ and 
 \begin{equation*}
\theta_1 (x)= 	\left \{ \begin{array}{lll}
1 &\text{if} \quad \,\,  x \in [\varepsilon ,\beta -\varepsilon],&\vspace{0.1cm}\\
0 &\text{if } \quad x \in \{0\}\cup [\beta ,L],&
\end{array}	\right. \qquad\qquad
\end{equation*}
and set
 \begin{equation*}
\max_{x\in [0 , L]}|\theta_1^{\prime}(x)|=M_{\theta_1^{\prime}}.
\end{equation*}
Multiplying \eqref{eq2ps} by $\la^{-1}\theta_1 \overline{v}$, integrating over $(0,L)$, then taking the imaginary part, we obtain
\begin{equation*}\label{eq1cutof1}
	\intdx \theta_1 |v|^2 dx -\Im \left\{\la^{-1}\intdx \theta_1 \left(S_b \left(u,v,\eta\right)\right)_x \overline{v}dx \right\} +\Im \left\{\la^{-1}\intdx c(\cdot)\theta_1 z\overline{v}dx \right\}=\Im \left\{\la^{-3}\intdx \theta_1 f^2 \overline{v}dx \right\}.
\end{equation*}
Using integration by parts in  the above equation and the fact that  $v(0)=v(L)=0$, we get
\begin{equation}\label{eq2cutof1}
	\intdx \theta_1 |v|^2 dx= -\Im \left\{\frac{1}{\la}\intdx (\theta_{1}^{\prime}\overline{v}+\theta_1 \overline{v_x } )  S_b(u,v,\eta) dx \right\} -\Im \left\{\frac{1}{\la}\intdx c(\cdot)\theta_1 z\overline{v}dx \right\}+\Im \left\{\frac{1}{\la^3}\intdx \theta_1 f^2 \overline{v}dx \right\}.
\end{equation}
Using  the definition of $c(\cdot) $, $S_b (u,v,\eta)$ and $\theta_1 $, then, using  Cauchy-Schwarz inequality, we obtain
\begin{equation*}\label{eq3cutof1}
\begin{array}{lll}
\displaystyle \left|\Im \left\{\la^{-1}\intdx (\theta_{1}^{\prime}\overline{v}+\theta_1 \overline{v_x } )  S_b (u,v,\eta) dx\right\}\right|=\left|\Im \left\{\la^{-1}\intdnb (\theta_{1}^{\prime}\overline{v}+\theta_1 \overline{v_x } )  S_1 (u,v,\eta) dx\right\}\right|\vspace{0.25cm}\\\hspace{1cm}\displaystyle \leq |\la|^{-1}\left[M_{\theta_1^{\prime}}\left(\intdnb|v|^2 dx \right)^{\frac{1}{2}}+\left(\intdnb |v_x|^2dx\right)^{\frac{1}{2}}\right]\left(\intdnb|S_1 (u,v,\eta)|^2dx\right)^{\frac{1}{2}}
\end{array}
\end{equation*} and 
\begin{equation*}\label{eq4cutof1}
\left| \Im \left\{\la^{-1}\intdx c(\cdot)\theta_1 z\overline{v}dx \right\}\right|=\left|\Im \left\{c_0 \la^{-1} \int_{\alpha }^{\beta}\theta_1 z\overline{v}dx \right\}\right|\leq c_0 |\la|^{-1}\left(\intdni|z|^2dx\right)^{\frac{1}{2}}\left(\intdni|v|^2dx\right)^{\frac{1}{2}}.
\end{equation*}
Thus, from the above inequalities, Lemma \ref{firstlemmaps} and the fact that $v$ and $z$ are uniformly bounded in $L^2 (0,L)$, we obtain 
\begin{equation}\label{2terms1}
-\Im \left\{\la^{-1}\intdx (\theta_{1}^{\prime}\overline{v}+\theta_1 \overline{v_x } )  S_b(u,v,\eta) dx\right\}=o(\la^{-2}) \ \  \text{and}\ \   -\Im \left\{\la^{-1}\intdx c(\cdot)\theta_1 z\overline{v}dx \right\}=O(|\la|^{-1})= o(1).
\end{equation}Inserting \eqref{2terms1} in \eqref{eq2cutof1}, then using the fact that $v$ is uniformly bounded in $L^2 (0,L)$ and $\displaystyle\|f^2 \|_{L^2 (0,L)}=o(1)$, we obtain 
\begin{equation*}\label{4.24}
	\intdx \theta_1 |v|^2 dx=o(1).
\end{equation*}
Finally, from the above estimation and the definition of $\theta_1 $, we obtain \eqref{intvps}. The proof is thus complete.
\end{proof}
\begin{lem}{\rm\label{intzyxpslemma}
	Let $0<\varepsilon <\min\left(\frac{\alpha}{2},\frac{\beta -\alpha }{4} \right)$.  Under the  hypothesis \eqref{H},  the solution $U =(u ,v ,y ,z ,\eta(\cdot,\rho) )^{\top}\in D(\AA )$ of system \eqref{eq1ps}-\eqref{eq5ps} satisfies the following estimations
	\begin{equation}\label{intzyxps}
	\int_{\alpha }^{\beta -2\varepsilon} |z |^2 dx =o(1) \quad \text{and}\quad
	\int_{\alpha +\varepsilon }^{\beta -3\varepsilon} |y_x |^2 dx =o(1). 
	\end{equation}}
\end{lem}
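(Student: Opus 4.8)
The statement to be proven, Lemma (\ref{intzyxps}), asserts that $\int_\alpha^{\beta-2\varepsilon}|z|^2\,dx=o(1)$ and $\int_{\alpha+\varepsilon}^{\beta-3\varepsilon}|y_x|^2\,dx=o(1)$. The strategy mirrors the proof of Lemma \ref{3rdlemmass} in the strong-stability section, but now localized via a cut-off function on the coupling subinterval $(\alpha,\beta)$ where both the Kelvin--Voigt region $(0,\beta)$ and the coupling region $(\alpha,\gamma)$ overlap. I would introduce the cut-off function $\theta_2\in C^1([0,L])$ already drawn in Figure \ref{Fig3}, with $\theta_2\equiv1$ on $[2\varepsilon,\beta-2\varepsilon]$ (hence $\equiv 1$ on $[\alpha,\beta-2\varepsilon]$ since $2\varepsilon<\alpha$) and $\theta_2\equiv0$ outside $[\varepsilon,\beta-\varepsilon]$, and a second cut-off $\theta_3$ with $\theta_3\equiv1$ on $[\alpha+\varepsilon,\beta-3\varepsilon]$, $\theta_3\equiv0$ outside $[\alpha,\beta-2\varepsilon]$. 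The point of nesting the supports is that each estimate uses the previous one: the $z$-estimate will be controlled using $\int_\varepsilon^{\beta-\varepsilon}|v|^2\,dx=o(1)$ from Lemma \ref{intvpslemma}, and the $y_x$-estimate will then use the $z$-estimate.

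\textbf{First step: the $z$-estimate.} Multiply \eqref{eq2ps} by $\lambda^{-1}\theta_2\overline{z}$ (or more precisely, following the pattern of \eqref{zm}, multiply by $\theta_2\overline{z}$ and handle the $i\lambda v$ term), integrate over $(0,L)$, integrate by parts on the $(S_b(u,v,\eta))_x$ term, and take the appropriate part (real part, as in the proof of Lemma \ref{3rdlemmass}). Using \eqref{eq3ps} to replace $\overline{z_x}$ by $-i\lambda\overline{y_x}-\lambda^{-2}\overline{f^3_x}$, one obtains $c_0\int \theta_2|z|^2\,dx$ on the left (recall $c(\cdot)=c_0$ on $(\alpha,\gamma)\supset\operatorname{supp}\theta_2$) balanced against: a term $\Re\{i\lambda\int\theta_2 S_1(u,v,\eta)\overline{y_x}\,dx\}$, which by Cauchy--Schwarz and \eqref{So} is $|\lambda|\cdot o(\lambda^{-1})=o(1)$; a term with $\theta_2'$ landing on $S_1$ times $z$, which is $O(\lambda^{-1})\cdot o(\lambda^{-1})$; terms in $f^2,f^3_x$ which are $o(1)$; and a term $\Re\{i\lambda\int\theta_2 v\overline{z}\,dx\}$, which is $O(|\lambda|)\cdot(\int_\varepsilon^{\beta-\varepsilon}|v|^2)^{1/2}$ — this is where the previous Lemma is needed, but it only gives $o(|\lambda|)$, which is not enough. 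So the correct multiplier is $\lambda^{-1}\theta_2\overline{z}$ rather than $\theta_2\overline z$: with the extra $\lambda^{-1}$, the $i\lambda v$ contribution becomes $O(1)\cdot(\int|v|^2)^{1/2}=o(1)$, the principal term $c_0\int\theta_2|z|^2$ picks up a $\lambda^{-1}$ that must be offset by integrating by parts on $(S_b)_x$ to expose $\lambda^{-1}(S_1)_x\overline z$ and then using $(S_b(u,v,\eta))_x = i\lambda v + c(\cdot)z - \lambda^{-2}f^2$ from \eqref{eq2ps} itself — equivalently one rewrites everything in terms of the already-controlled quantities. The cleanest route is to follow the exact template of \eqref{zm}--\eqref{3.43ss}, but with $\theta_2$ inserted and boundary terms vanishing because $\theta_2$ is supported away from $0$ and $\beta$, and with the uniform bounds $\int|z|^2,\int|y_x|^2\le1$ replacing the role played there by $\|U^n\|_{\HH}=1$; the new feature is that the $|\lambda|\int|v|^2$-type contribution is absorbed by \eqref{intvps} after supplying one factor of $\lambda^{-1}$ from the multiplier.

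\textbf{Second step: the $y_x$-estimate.} Multiply \eqref{eq4ps} by $-\lambda^{-1}\theta_3\overline{z}$, integrate over $(0,L)$, take the imaginary part, integrate by parts on $y_{xx}$, and use \eqref{eq3ps} again to convert $\overline{z_x}$. This reproduces the structure of the derivation of \eqref{3.43}, yielding $\int\theta_3|y_x|^2\,dx$ bounded by $\int\theta_3|z|^2\,dx$ (now $o(1)$ by the first step, since $\operatorname{supp}\theta_3\subset(\alpha,\beta-2\varepsilon)$ where we have just shown $\int|z|^2=o(1)$), plus terms $c_0|\lambda|^{-1}(\int_{\alpha}^{\beta}|v|^2)^{1/2}=o(1)$ using Lemma \ref{intvpslemma}, plus $O(\lambda^{-1})\|F\|_{\HH}$ terms, plus a $\theta_3'$-commutator term $\lambda^{-1}\int\theta_3' y_x\overline z$ which is $O(\lambda^{-1})$ by the uniform bounds — all $o(1)$. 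No boundary terms appear because $\theta_3$ vanishes near $\alpha$ and $\beta$. Restricting the resulting integrals to the subintervals on which $\theta_2\equiv1$ and $\theta_3\equiv1$ gives \eqref{intzyxps}.

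\textbf{Main obstacle.} The delicate point is the bookkeeping of powers of $\lambda$: the polynomial-stability setting forces $(\lambda^n)^2(i\lambda^n-\AA)U^n=F^n\to0$, so $\|F\|_{\HH}=o(1)$ but each resolvent equation carries a $\lambda^{-2}$, and the multipliers must be chosen (here with a compensating $\lambda^{-1}$) so that the "bad" term $\lambda\int\theta|v|^2$ — genuinely of size $\lambda\cdot o(1)$ from Lemma \ref{intvpslemma}, which alone would blow up — is tamed to $o(1)$. One must also be careful that the cut-off derivative terms $\theta_j'$ are supported in $(\varepsilon,2\varepsilon)\cup(\beta-2\varepsilon,\beta-\varepsilon)$ etc., regions where one only has the uniform $L^2$ bound $\le1$ and not yet any smallness, so those commutator terms must come multiplied by a negative power of $\lambda$; checking that this is automatically the case in each integration by parts is the crux. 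Everything else is a routine Cauchy--Schwarz estimate using Lemmas \ref{firstlemmaps} and \ref{intvpslemma} together with the uniform bound $\|U\|_{\HH}=1$.
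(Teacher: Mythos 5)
Your second step (multiplying \eqref{eq4ps} by $-\la^{-1}\theta_3\overline z$, taking imaginary parts, integrating by parts and using $\overline{z_x}=-i\la \overline{y_x}-\la^{-2}\overline{f^3_x}$) is exactly the paper's argument and is fine \emph{once the $z$-estimate is available}. The gap is in your first step. You correctly isolate the obstruction, the term $\Re\bigl\{i\la\int_0^L\theta_2 v\overline z\,dx\bigr\}$: with the multiplier $\theta_2\overline z$ it is only $|\la|\cdot o(1)$ by Lemma \ref{intvpslemma}, hence uncontrolled, while with your proposed multiplier $\la^{-1}\theta_2\overline z$ the very term you need, $c_0\int\theta_2|z|^2dx$, also acquires the factor $\la^{-1}$, so the identity only yields $\int\theta_2|z|^2dx=o(|\la|)$. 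Your suggested way out — integrating by parts on $(S_b)_x$ and ``using $(S_b(u,v,\eta))_x=i\la v+c(\cdot)z-\la^{-2}f^2$ from \eqref{eq2ps} itself'' — is circular: substituting the equation back into the term it generated returns you to the identity you started from and produces no new estimate; and your closing sentence (``follow the template of \eqref{zm}--\eqref{3.43ss} \dots absorbed by \eqref{intvps} after supplying one factor of $\la^{-1}$ from the multiplier'') asserts the two incompatible normalizations simultaneously. As written, the first estimate in \eqref{intzyxps} is not established.

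The idea you are missing is the paper's symmetrization: it does not try to beat the $i\la v\overline z$ term by a power of $\la$ at all. It multiplies \eqref{eq2ps} by $\theta_2\overline z$ \emph{and} \eqref{eq4ps} by $\theta_2\overline v$, takes real parts and adds; since $\la$ is real, $\Re\bigl\{i\la\int_0^L\theta_2\left(v\overline z+z\overline v\right)dx\bigr\}=0$, so the dangerous cross terms cancel identically, while the coupling terms combine into $\int_0^L c(\cdot)\theta_2|z|^2dx-\int_0^L c(\cdot)\theta_2|v|^2dx$. After integration by parts (no boundary terms, since $\theta_2$ vanishes near $0$ and $\beta$) and the substitution $\overline{z_x}=-i\la\overline{y_x}-\la^{-2}\overline{f^3_x}$, the remaining terms are $o(1)$ by Lemma \ref{firstlemmaps} (in particular \eqref{So}, which supplies the crucial $o(|\la|^{-1})$ against the factor $|\la|$) together with the uniform bounds, giving $\int_0^L c(\cdot)\theta_2|z|^2dx=\int_0^L c(\cdot)\theta_2|v|^2dx+o(1)=o(1)$ by Lemma \ref{intvpslemma}, whence the first estimate. (A single-multiplier repair of your route does exist — write $v=i\la u-\la^{-2}f^1$ on $\operatorname{supp}\theta_2\subset(0,\beta)$ and use $\int_0^{\beta}|u_x|^2dx=o(\la^{-4})$, so that $\la^2\int\theta_2|u||z|dx=o(1)$ — but you did not invoke it.) With the $z$-estimate restored, your second step then yields the $y_x$-estimate as in the paper.
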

\begin{proof} First, we fix a cut-off function $\theta_2 \in C^1 ([0,L])$ (see figure \ref{Fig3}) such that $0\leq \theta_2 (x)\leq 1$, for all $x\in[0,L]$ and 
	\begin{equation*}
	\theta_2 (x)= 	\left \{ \begin{array}{lll}
	0 &\text{if } \quad x \in [0,\varepsilon]\cup [\beta -\varepsilon ,L],&\vspace{0.1cm}\\
	1 & \text{if} \quad \,\, x \in [2\varepsilon ,\beta -2\varepsilon],&
	\end{array}	\right.
	\end{equation*} 
	and set 
	\begin{equation*}
	\max_{x\in [0 , L]}|\theta_2^{\prime}(x)|=M_{\theta_2^{\prime}}.
	\end{equation*}
	 Multiplying  \eqref{eq2ps} and \eqref{eq4ps} by $\theta_2 \overline{z}$ and $\theta_2 \overline{v}$ respectively, integrating over $(0,L)$, then taking the real part, we obtain
	\begin{equation}\label{eq1cutof2}
		\Re\left\{i\la \intdx \theta_2 v\overline{z}dx  \right\}-\Re\left  \{\intdx \theta_2 (S_b (u,v,\eta))_x \overline{z}dx \right\}+\intdx c(\cdot)\theta_2 |z|^2 dx  =\Re\left\{\la^{-2}\intdx \theta_2 f^2 \overline{z}dx \right\}\end{equation} and \begin{equation}\label{eq2cutof2}
		\quad	\Re\left\{i\la \intdx \theta_2 z\overline{v}dx  \right\}-\Re\left  \{\intdx \theta_2 y_{xx} \overline{v}dx \right\}-\intdx c(\cdot)\theta_2 |v|^2 dx =\Re\left\{\la^{-2}\intdx \theta_2 f^4 \overline{v}dx \right\}.\end{equation}
Adding  \eqref{eq1cutof2} and \eqref{eq2cutof2}, then using integration by parts and the fact that $v(0)=v(L)=0$ and $z(0)=z(L)=0$, we get 
	\begin{equation}\label{eq3cutof2}
	\begin{array}{lll}
	\displaystyle	\intdx c(\cdot)\theta_2 |z|^2 dx =	\displaystyle\intdx c(\cdot)\theta_2 |v|^2 dx -\Re\left\{\intdx (\theta_{2}^{\prime}\overline{z}+\theta_2 \overline{z_x })S_b(u,v,\eta) dx \right\}\vspace{0.25cm}\\\displaystyle -\,\Re\left\{\intdx (\theta_{2}^{\prime}\overline{v}+\theta_2 \overline{v_x })y_x  dx  \right\} + \Re\left\{\la^{-2}\intdx \theta_2 f^2 \overline{z}dx \right\}+\Re\left\{\la^{-2}\intdx \theta_2 f^4 \overline{v}dx \right\}.
	\end{array}
	\end{equation}From \eqref{eq3ps}, we deduce that 
	\begin{equation}\label{3xps}
	\overline{z_x }=-i\la \overline{y_x } -\la^{-2}\overline{f^{3}_x }.
	\end{equation}
	Using  \eqref{3xps} and the definition of $S_b (u,v,\eta)$ and $\theta_2 $, then using Cauchy-Schwarz inequality, we obtain	
	\begin{equation*}
	\begin{array}{lll}
\displaystyle	\left|\Re\left\{\intdx (\theta^{\prime}_2 \overline{z} +\theta_2 \overline{z_x } )S_b (u,v,\eta)dx \right\}\right|= \displaystyle \left| \Re\left\{\int_{\varepsilon}^{\beta-\varepsilon}\left[\theta_2^{\prime}\overline{z}+\theta_2 (-i\la \overline{y_x}-\la^{-2}\overline{f^3_x})\right]S_1 (u,v,\eta)dx\right\} \right|\vspace{0.25cm}\\\hspace{0.5cm}\displaystyle \leq \left[ M_{\theta_2^{\prime}}\left(\int_{\varepsilon}^{\beta-\varepsilon}|z|^2dx\right)^{\frac{1}{2}}+|\la|\left(\int_{\varepsilon}^{\beta-\varepsilon}|y_x|^2dx\right)^{\frac{1}{2}}+\la^{-2}\left(\int_{\varepsilon}^{\beta-\varepsilon}|f^3_x|^2dx\right)^{\frac{1}{2}} \right]\left(\int_{\varepsilon}^{\beta-\varepsilon}|S_1 (u,v,\eta)|^2 dx\right)^{\frac{1}{2}}
	\end{array}
	\end{equation*}
and 
\begin{equation*}
	\begin{array}{lll}
\displaystyle	 \left|\Re\left\{\intdx (\theta_{2}^{\prime}\overline{v}+\theta_2 \overline{v_x })y_x  dx  \right\}\right|= \left|\Re\left\{\int_{\varepsilon}^{\beta-\varepsilon} (\theta_{2}^{\prime}\overline{v}+\theta_2 \overline{v_x })y_x  dx  \right\}\right|\vspace{0.25cm}\\\hspace{1cm}\displaystyle \leq \left[M_{\theta_2^{\prime}}\left(\int_{\varepsilon}^{\beta-\varepsilon}|v|^2dx\right)^{\frac{1}{2}}+\left(\int_{\varepsilon}^{\beta-\varepsilon}|v_x|^2dx\right)^{\frac{1}{2}} \right]\left(\int_{\varepsilon}^{\beta-\varepsilon}|y_x|^2 dx\right)^{\frac{1}{2}}.
	\end{array}\qquad\quad
\end{equation*}
Thus, from the above inequalities, Lemmas  \ref{firstlemmaps}, \ref{intvpslemma}  and the  fact that  $y_x $, $z $ are uniformly bounded in $L^2 (0,L)$ and $\displaystyle\|f^{3}_x \|_{L^2 (0,L)}=o(1)$, we obtain \begin{equation}\label{4.34}
	-\Re\left\{\intdx (\theta^{\prime}_2 \overline{z} +\theta_2 \overline{z_x } )S_b(u,v,\eta) dx \right\}= o(1) \quad \text{and} \quad 	\displaystyle	-\Re\left\{\intdx (\theta_{2}^{\prime}\overline{v}+\theta_2 \overline{v_x })y_x  dx  \right\}= o(1).
\end{equation}
Inserting \eqref{4.34} in \eqref{eq3cutof2}, then using the fact that $v$, $z$ are uniformly bounded in $L^2 (0,L)$ and $\displaystyle\|f^2 \|_{L^2 (0,L)}=o(1)$, $\displaystyle\|f^4 \|_{L^2 (0,L)}=o(1)$, we obtain 
\begin{equation*}\label{4.33}
	\intdx c(\cdot)\theta_2 |z|^2 dx =	\displaystyle\intdx c(\cdot)\theta_2 |v|^2 dx +o(1).
\end{equation*}
Therefore, from the above estimation, Lemma \ref{intvpslemma} and  the definition of $c(\cdot)$ and $\theta_2 $, we obtain the first estimation in \eqref{intzyxps}. On the other hand, let us fix a cut-off function $\theta_3 \in C^1 ([0,L])$ (see Figure \ref{Fig3}) such that $0\leq \theta_3 (x)\leq1$, for all $x\in [0,L]$ and 
	\begin{equation*}
	\theta_3 (x)= 	\left \{ \begin{array}{lll}
		0 &\text{if } \quad x \in [0,\alpha]\cup [\beta -2\varepsilon ,L],&\vspace{0.1cm}\\
	1 & \text{if} \quad \,\, x \in [\alpha +\varepsilon ,\beta -3\varepsilon],&
	\end{array}	\right.\qquad
	\end{equation*}
	Now, multiplying  \eqref{eq4ps} by $-\la^{-1}\theta_3 \overline{z}$, integrating over $(0,L)$, then taking the imaginary part, we obtain
\begin{equation*}\label{eq1cutoff3}	-\intdx \theta_3 |z|^2 dx +\Im\left\{\la^{-1}\intdx \theta_3 y_{xx}\overline{z}dx \right\}+\Im\left\{\la^{-1}\intdx c(\cdot)\theta_3 v\overline{z}dx \right\}= -\Im \left\{\la^{-3}\intdx \theta_{3}f^4 \overline{z}dx \right\}.
	\end{equation*}
	Using  integration by parts in the above equation and the fact that $z(0)=z(L)=0$, then using \eqref{3xps}, we get 
	\begin{equation}\label{4.37}
	\begin{array}{lll}
\displaystyle	\intdx \theta_3 |y_x |^2 dx =\displaystyle\intdx \theta_3 |z|^2 dx +\Im\left\{  \la^{-1}\intdx \theta_{3}^{\prime} y_x \overline{z}dx \right\}-\Im\left\{\la^{-1}\intdx c(\cdot)\theta_3 v\overline{z}dx \right\}\vspace{0.25cm}\\\hspace{2.5cm}\displaystyle -\, \Im \left\{\la^{-3}\intdx \theta_{3} \overline{f^{3}_x }y_x dx \right\}-\Im \left\{\la^{-3}\intdx \theta_{3}f^4 \overline{z}dx \right\}.
	\end{array}
	\end{equation}From the definition of $c(\cdot)$ and $\theta_3 $, the first estimation of \eqref{intzyxps} and the fact that $v$ and $y_x $ are uniformly bounded in $L^2 (0,L)$, we obtain
	\begin{equation}\label{4.38}
\left\{	\begin{array}{lll}
\displaystyle	 \Im\left\{  \la^{-1}\intdx \theta_{3}^{\prime} y_x \overline{z}dx \right\}= \Im\left\{  \la^{-1}\int_{\alpha}^{\beta-2\varepsilon} \theta_{3}^{\prime} y_x \overline{z}dx \right\}=o\left(|\la|^{-1}\right),\vspace{0.25cm}\\\displaystyle  -\Im\left\{\la^{-1}\intdx c(\cdot)\theta_3 v\overline{z}dx \right\}=-\Im\left\{c_0 \la^{-1}\int_{\alpha}^{\beta-2\varepsilon} \theta_3 v\overline{z}dx \right\}= o\left(|\la|^{-1}\right).
	 \end{array}
	 \right.
	\end{equation}
	Inserting \eqref{4.38} in \eqref{4.37}, then using the fact that $y_x $, $z$ are uniformly bounded in $L^2 (0,L)$ and $\displaystyle \|f^{3}_x \|_{L^2 (0,L)}=o(1)$, $\displaystyle \|f^{4} \|_{L^2 (0,L)}=o(1)$, we get   \begin{equation*}\label{4.38ps}
	\intdx \theta_3 |y_x |^2 dx =\displaystyle\intdx \theta_3 |z|^2 dx +o(|\la|^{-1}).
	\end{equation*}
	Finally, from the above estimation, the first estimation of \eqref{intzyxps} and the definition of $\theta_3 $, we obtain the second estimation in \eqref{intzyxps}. The proof is thus complete.
\end{proof}
\begin{lem}\label{boundariesps}{\rm$0<\varepsilon <\min\left(\frac{\alpha}{2},\frac{\beta -\alpha }{4} \right)$.  Under the  hypothesis \eqref{H},  the solution $U =(u ,v ,y ,z ,\eta (\cdot, \rho))^{\top}\in D(\AA )$ of  system \eqref{eq1ps}-\eqref{eq5ps} satisfies the following estimations
	\begin{eqnarray}
	|v(\gamma )|^2 +|v(\beta -3\varepsilon)|^2 +a |u_x (\gamma )|^2+a^{-1}|\left(S_1(u,v,\eta)\right) (\beta -3\varepsilon)|^2 =O(1),\label{vbps}\\
|z(\gamma )|^2 +|z(\beta -3\varepsilon)|^2 +|y_x (\gamma )|^2 +|y_x (\beta -3\varepsilon)|^2 =O(1).\label{zbps}
	\end{eqnarray}}
\end{lem}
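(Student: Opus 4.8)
The plan is to derive \eqref{vbps}--\eqref{zbps} from a Rellich-type multiplier identity on the interval $(\beta-3\varepsilon,\gamma)$, in the spirit of the boundary estimates \eqref{zboundary}--\eqref{ynxboundary} of Section \ref{sec3}, but organized so that the terms carrying a factor $\la$ \emph{cancel} rather than survive; this cancellation is the essential new point, since here $|\la|\to\infty$ whereas in Section \ref{sec3} one had $\la^n\to\omega$. Fix once and for all a function $g\in C^1([\beta-3\varepsilon,\gamma])$ with $g(\beta-3\varepsilon)=-1$ and $g(\gamma)=1$ (for instance $g(x)=-1+2(x-\beta+3\varepsilon)/(\gamma-\beta+3\varepsilon)$), and write $M_g=\max_{[\beta-3\varepsilon,\gamma]}|g|$, $M_{g'}=\max_{[\beta-3\varepsilon,\gamma]}|g'|$; note that $c(\cdot)\equiv c_0$ on $(\beta-3\varepsilon,\gamma)$ because $\varepsilon<\frac{\beta-\alpha}{4}$ forces $\beta-3\varepsilon>\alpha$.

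For \eqref{zbps}, differentiate \eqref{eq3ps} in $x$ to get $i\la y_x-z_x=\la^{-2}f^3_x$ in $L^2(0,L)$; multiply this by $2g\,\overline{z}$ and \eqref{eq4ps} by $2g\,\overline{y_x}$, integrate over $(\beta-3\varepsilon,\gamma)$, take real parts and add. The two contributions $\Re\{2i\la\int g\,y_x\overline{z}\}$ and $\Re\{2i\la\int g\,z\,\overline{y_x}\}$ have opposite real parts, hence cancel; integrating the surviving term $-\int g(|z|^2+|y_x|^2)_x$ by parts gives $[g(|z|^2+|y_x|^2)]_{\beta-3\varepsilon}^{\gamma}=|z(\gamma)|^2+|y_x(\gamma)|^2+|z(\beta-3\varepsilon)|^2+|y_x(\beta-3\varepsilon)|^2$. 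What remains on the right-hand side is $\int g'(|z|^2+|y_x|^2)$, $-\Re\{2c_0\int g\,v\,\overline{y_x}\}$ and two terms proportional to $\la^{-2}$; by Cauchy--Schwarz all of these are $O(1)$, because $\|U\|_{\HH}=1$ bounds $\|v\|_{L^2(0,L)}$, $\|z\|_{L^2(0,L)}$, $\|y_x\|_{L^2(0,L)}$ and $\|F\|_{\HH}\to 0$. This proves \eqref{zbps}.

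For \eqref{vbps}, proceed analogously with the $x$-derivative of \eqref{eq1ps}, namely $i\la u_x-v_x=\la^{-2}f^1_x$, multiplied by $2g\,\overline{v}$, and with \eqref{eq2ps} multiplied by $2a^{-1}g\,\overline{S_b}$. It is convenient to keep $S_b$ (which for $U\in D(\AA)$ belongs to $H^1(0,L)$, hence is continuous at $\beta$) rather than $S_1$, so that the boundary term produced is $|v(\gamma)|^2+a\,|u_x(\gamma)|^2+|v(\beta-3\varepsilon)|^2+a^{-1}|S_1(\beta-3\varepsilon)|^2$, using $S_b(\gamma)=a\,u_x(\gamma)$ and $S_b(\beta-3\varepsilon)=S_1(\beta-3\varepsilon)$. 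On $(\beta,\gamma)$ one has $a^{-1}S_b=u_x$, so the $\la$-weighted terms cancel exactly as before; on $(\beta-3\varepsilon,\beta)$ one has instead $a^{-1}S_b=u_x+a^{-1}\kappa_1 v_x+a^{-1}\kappa_2\eta_x(\cdot,1)$, which leaves the extra contributions $\Re\{2ia^{-1}\la\kappa_1\int_{\beta-3\varepsilon}^{\beta}g\,v\,\overline{v_x}\}$ and $\Re\{2ia^{-1}\la\kappa_2\int_{\beta-3\varepsilon}^{\beta}g\,v\,\overline{\eta_x}(\cdot,1)\}$; these are $o(1)$, being controlled by constant multiples of $|\la|\,\|v\|_{L^2(0,L)}\|v_x\|_{L^2(0,\beta)}$ and $|\la|\,\|v\|_{L^2(0,L)}\|\eta_x(\cdot,1)\|_{L^2(0,\beta)}$, which tend to $0$ by \eqref{vnxo} and \eqref{eta1o}. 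The remaining right-hand side --- $\int g'(|v|^2+a^{-1}|S_b|^2)$, $\Re\{2a^{-1}c_0\int g\,z\,\overline{S_b}\}$ and $\la^{-2}$-terms --- is $O(1)$ once one observes that $\|S_b\|_{L^2(\beta-3\varepsilon,\gamma)}^2=\|S_1\|_{L^2(\beta-3\varepsilon,\beta)}^2+a^2\|u_x\|_{L^2(\beta,\gamma)}^2=O(1)$, the first summand being even $o(\la^{-2})$ by \eqref{So}. This proves \eqref{vbps}.

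The only delicate point is the bookkeeping at the interface $x=\beta$, where $b(\cdot)$ --- and hence the concrete form of $S_b$ --- is discontinuous: the two multiplier identities must be split there, and one must verify that no interface boundary term survives, which holds because $S_b,y_x\in H^1(0,L)$ and $v,z\in H^1_0(0,L)$ are all continuous across $\beta$. The low-order terms generated on $(\beta-3\varepsilon,\beta)$ are absorbed via Lemma \ref{firstlemmaps}, and every other estimate is a routine application of the Cauchy--Schwarz inequality.
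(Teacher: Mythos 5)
Your proposal is correct and follows essentially the same route as the paper: the paper's own proof fixes $g_2\in C^1([\beta-3\varepsilon,\gamma])$ with $g_2(\beta-3\varepsilon)=-g_2(\gamma)=1$ (only the sign convention differs from your $g$), multiplies the $x$-derivative of \eqref{eq1ps} by $2g_2\overline{v}$ and \eqref{eq2ps} by $2a^{-1}g_2\overline{S_b}$, respectively the $x$-derivative of \eqref{eq3ps} by $2g_2\overline{z}$ and \eqref{eq4ps} by $2g_2\overline{y_x}$, so that the $i\la$-terms cancel up to exactly the residual contributions $\Re\{\frac{2i\la}{a}\int_{\beta-3\varepsilon}^{\beta}g_2 v(\kappa_1\overline{v_x}+\kappa_2\overline{\eta_x}(\cdot,1))dx\}$, which are handled via \eqref{vnxo} and \eqref{eta1o} just as you do, and the interface contribution at $\beta$ is killed by the continuity $S_1(\beta^-)=au_x(\beta^+)$ (the paper's $\mathcal{K}(\beta)=0$), matching your argument. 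The remaining terms are bounded exactly as in your proposal, by Cauchy--Schwarz, \eqref{So} and the uniform boundedness of $u_x,v,y_x,z$ in $L^2(0,L)$.
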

\begin{proof}
	First, we fix a function $g_2 \in C^1 ([\beta -3\varepsilon ,\gamma])$ such that 
	\begin{equation*}
		g_2 (\beta -3\varepsilon )=-g_2 (\gamma )=1  \quad  \text{and \ set }\quad \max_{x\in [\beta -3\varepsilon ,\gamma]}|g_2 (x)|=M_{g_2 } \quad \text{and}\quad \max_{x\in [\beta -3\varepsilon ,\gamma]}|g_{2}^{\prime} (x)|=M_{g_{2}^{\prime} }.
	\end{equation*}
	 From  \eqref{eq1ps}, we deduce that  \begin{equation}\label{1xps}
	i\la u_x -v_x =\la^{-2}f^{1}_x .
	\end{equation}Multiplying  \eqref{1xps} and \eqref{eq2ps} by $2g_2 \overline{v}$ and $2a^{-1}g_2 \overline{S_b }(u,v,\eta)$ respectively, integrating over $(\beta -3\varepsilon ,\gamma)$, using the definition of  $c(\cdot)$ and $S_b (u,v,\eta)$, then taking the real part, we obtain
	\begin{equation*}\label{intbvps}
		\Re\left\{2i\la\int_{\beta -3\varepsilon}^{\gamma }g_2 u_x \overline{v}dx \right\}-\int_{\beta -3\varepsilon}^{\gamma}g_2 \left(\left|v\right|^2 \right)_x dx =\Re\left\{2\la^{-2}\int_{\beta -3\varepsilon}^{\gamma }g_2 f^{1}_x \overline{v} dx\right\}
	\end{equation*} 
	and 
	\begin{equation*}\label{S_1vbou}
 \begin{array}{lll}
 &&\displaystyle	\Re\left\{2i\la \int_{\beta -3\varepsilon}^{\gamma } g_2 v \overline{u_x }dx \right\}+	\Re\left\{\frac{2i\la}{a} \int_{\beta -3\varepsilon}^{\beta } g_2 v \left(\kappa_1 \overline{v_x }+\kappa_{2} \overline{\eta_x }(\cdot,1) \right)dx \right\}-a^{-1} \int_{\beta -3\varepsilon}^{\beta }g_2 \left(\left|S_1(u,v,\eta) \right|^2 \right)_x dx \vspace{0.25cm}\\&&\displaystyle -\,a \int_{\beta }^{\gamma }g_2 \left(\left|u_x \right|^2 \right)_x dx +\Re\left\{\frac{2c_0}{a} \int_{\beta -3\varepsilon}^{\beta }g_2 z \overline{S_1 }(u,v,\eta)dx\right\} +\Re\left\{	2c_0 \int_{\beta }^{\gamma }g_2 z \overline{u_x  }dx \right\}\vspace{0.25cm}\\&&=\displaystyle \Re\left\{\frac{2}{a\la^2}\int_{\beta -3\varepsilon}^{\beta }g_2 f^2 \overline{S_1 }(u,v,\eta)dx\right\} +\Re\left\{\frac{2}{\la^2}\int_{\beta}^{\gamma} g_2 f^2 \overline{u_x }dx  \right\}.
	\end{array} 
	\end{equation*}
	Adding  the above Equations, then using integration by parts, we get 
\begin{equation*}
	\begin{array}{lll}
&&\displaystyle	\left[-g_2 \left|v\right|^2 \right]_{\beta -3\varepsilon}^{\gamma} +\left[-a^{-1}g_2 \left|S_1(u,v,\eta) \right|^2 \right]_{\beta -3\varepsilon}^{\beta} +\left[-ag_2 \left|u_x \right|^2 \right]_{\beta }^{\gamma}=\displaystyle-\,\int_{\beta -3\varepsilon}^{\gamma }g_{2}^{\prime}|v|^2 dx -a^{-1}\int_{\beta -3\varepsilon}^{\beta }g_{2}^{\prime}|S_1 (u,v,\eta)|^2 dx  \vspace{0.25cm}\\&&\displaystyle  -\,a\int_{\beta }^{\gamma }g_{2}^{\prime}|u_x |^2 dx-\Re\left\{\frac{2i\la}{ a} \int_{\beta -3\varepsilon}^{\beta } g_2 v \left(\kappa_1 \overline{v_x }+\kappa_{2} \overline{\eta_x }(\cdot,1) \right)dx \right\}- \Re\left\{\frac{2c_0}{ a}\int_{\beta -3\varepsilon}^{\beta }g_2 z \overline{S_1 }(u,v,\eta)dx\right\}  \vspace{0.25cm}\\&&\displaystyle -\,\Re\left\{	2c_0 \int_{\beta }^{\gamma }g_2 z \overline{u_x  }dx \right\}+ \Re\left\{\frac{2}{\la^{2}}\int_{\beta -3\varepsilon}^{\gamma }g_2 f^{1}_x \overline{v}dx \right\}+ \Re\left\{\frac{2}{a\la^{2}}\int_{\beta -3\varepsilon}^{\beta }g_2 f^2 \overline{S_1 }(u,v,\eta)dx\right\} +\Re\left\{\frac{2}{\la^2}\int_{\beta}^{\gamma} g_2 f^2 \overline{u_x }dx  \right\}.
	\end{array}
\end{equation*}
Using the definition of $g_2 $ and Cauchy-Schwarz inequality in the above Equation, we obtain
\begin{equation*}\label{4.44}
\begin{array}{lll}
\hspace{0.5cm}\displaystyle |v(\gamma )|^2 +|v(\beta -3\varepsilon)|^2 +a |u_x (\gamma )|^2+a^{-1}\left|\left(S_1 (u,v,\eta)\right)\left(\beta -3\varepsilon\right)\right|^2  +\mathcal{K}(\beta)\vspace{0.25cm}\\\hspace{1cm} \leq\displaystyle M_{g_{2}^{\prime}}\left[\int_{\beta-3\varepsilon}^{\gamma} |v|^2 dx +a^{-1}\int_{\beta -3\varepsilon}^{\beta }|S_1 (u,v,\eta)|^2 dx +a \int_{\beta}^{\gamma} |u_x |^2 dx \right]\vspace{0.25cm}\\
\hspace{1.25cm}\displaystyle+\,\frac{2|\la|M_{g_2 }}{a}\left[\kappa_{1} \left(\int_{\beta -3\varepsilon}^{\beta }|v_x |^2 dx \right)^{\frac{1}{2}} +|\kappa_{2}| \left(\int_{\beta -3\varepsilon}^{\beta }|\eta_x (\cdot,1) |^2 dx \right)^{\frac{1}{2}}\right]\left(\int_{\beta-3\varepsilon}^{\beta} |v|^2 dx \right)^{\frac{1}{2}}\vspace{0.25cm}\\
\hspace{1.25cm}\displaystyle +\,\frac{2c_0 M_{g_2 }}{a} \left(\int_{\beta -3\varepsilon}^{\beta }|S_1 (u,v,\eta) |^2 dx \right)^{\frac{1}{2}}\left(\int_{\beta-3\varepsilon}^{\beta}|z|^2dx\right)^{\frac{1}{2}} +2c_0 M_{g_2 }\left(\int_{\beta}^{\gamma} |z |^2 dx \right)^{\frac{1}{2}}\left(\int_{\beta}^{\gamma}|u_x|^2 dx\right)^{\frac{1}{2}}\vspace{0.25cm}\\
\hspace{1.25cm}\displaystyle +\, \frac{2M_{g_2 }}{\la^2}\left(\int_{\beta-3\varepsilon}^{\gamma} |f^{1}_x |^2 dx\right)^{\frac{1}{2}}\left(\int_{\beta-3\varepsilon}^{\gamma} |v |^2 dx\right)^{\frac{1}{2}}+\frac{2M_{g_2 }}{a\la^2}\left(\int_{\beta-3\varepsilon}^{\beta} |f^{2} |^2 dx\right)^{\frac{1}{2}}\left(\int_{\beta -3\varepsilon }^{\beta } |S_1 (u,v,\eta)|^2 dx\right)^{\frac{1}{2}}\vspace{0.25cm}\\
\hspace{1.25cm}\displaystyle+\, \frac{2M_{g_2 }}{\la^{2}}\left(\int_{\beta}^{\gamma} |f^2 |^2 dx \right)^{\frac{1}{2}}\left(\int_{\beta}^{\gamma} |u_x |^2 dx \right)^{\frac{1}{2}}.
\end{array}\qquad
\end{equation*}
where $\
\mathcal{K}(\beta)=g_2 (\beta )\left(a|u_x (\beta^{+})|^2 -a^{-1}|\left(S_1 (u,v,\eta)\right)(\beta^{-})|^2 \right)$.
Moreover, since $S_b(u,v,\eta) \in H^1 (0,L)\subset C([0,L])$, then we obtain 
	\begin{equation}\label{contsb}
		|\left(S_1 (u,v,\eta)\right)(\beta^{-})|^2 =|au_x (\beta^{+})|^2 \ \ \text{and consequently}\ \ \mathcal{K}(\beta)=0.
	\end{equation}
	Inserting \eqref{contsb} in the above inequality, then using Lemma \ref{firstlemmaps} and the fact that $u_x $, $v$, $z$ are uniformly bounded in $L^2 (0,L)$ and $\displaystyle\|f^{1}_x \|_{L^2 (0,L)}=o(1)$, $\displaystyle\|f^2 \|_{L^2 (0,L)}=o(1)$, we obtain \eqref{vbps}.
Next, from  \eqref{eq3ps}, we deduce that
\begin{equation}\label{4.46}
	i\la y_x -z_x =\la^{-2}f^3_x .
\end{equation} 
	 Multiplying Equations \eqref{4.46} and \eqref{eq4ps} by $2g_2 \overline{z}$ and $2g_2 \overline{y_x }$ respectively, integrating over $(\beta-3\varepsilon ,\gamma )$, using the definition of $c(\cdot)$, then taking the real part, we obtain 
\begin{equation}\label{intbzbyxps}
		\Re\left\{2i\la \int_{\beta -3\varepsilon}^{\gamma }g_2 y_x \overline{z}dx \right\}-\int_{\beta -3\varepsilon}^{\gamma }g_2\left(\left|z\right|^2 \right)_x dx =\Re\left\{ 2\la^{-2}\int_{\beta -3\varepsilon}^{\gamma }g_2 f^{3}_x \overline{z}dx \right\}
		\end{equation}
		and
	\begin{equation}\label{intbzbyxps1}
	\Re\left\{2i\la \int_{\beta -3\varepsilon}^{\gamma }g_2 z \overline{y_x }dx \right\}-\int_{\beta -3\varepsilon}^{\gamma }g_2 \left(\left|y_x \right|^2 \right)_x dx -\Re\left\{2c_0  \int_{\beta -3\varepsilon}^{\gamma }g_2 v \overline{y_x }dx \right\} =\Re\left\{ 2\la^{-2}\int_{\beta -3\varepsilon}^{\gamma }g_2 f^{4} \overline{y_x }dx \right\}.
\end{equation}
Adding Equations \eqref{intbzbyxps} and \eqref{intbzbyxps1}, then using integration by parts, we obtain 
\begin{equation*}\label{intbzbyxps3}
	\begin{array}{lll}
\displaystyle	\left[ -g_2 \left(\left|z\right|^2 +\left|y_x \right|^2 \right) \right]_{\beta -3\varepsilon}^{\gamma } =\displaystyle -\,\int_{\beta -3\varepsilon}^{\gamma }g_{2}^{\prime}(|z|^2 +|y_x |^2 )dx +\Re\left\{2c_0 \int_{\beta -3\varepsilon}^{\gamma }gv\overline{y_x }dx \right\}+\Re\left\{ 2\la^{-2}\int_{\beta -3\varepsilon}^{\gamma }g_2 f^{3}_x \overline{z}dx \right\}\vspace{0.25cm}\\\hspace{4cm}\displaystyle+\,\Re\left\{ 2\la^{-2}\int_{\beta -3\varepsilon}^{\gamma }g_2 f^{4} \overline{y_x}dx \right\}.
	\end{array}
\end{equation*}
Using the definition of $g_2 $ and Cauchy-Schwarz inequality in the above Equation, we obtain 
\begin{equation*}\label{3.132}
\begin{array}{lll}
\displaystyle|z(\gamma )|^2 +|z(\beta -3\varepsilon)|^2 +|y_x (\gamma )|^2 +|y_x (\beta -3\varepsilon)|^2 \vspace{0.25cm}\\\hspace{1cm} \leq\displaystyle M_{g_{2}^{\prime}}\int_{\beta-3\varepsilon}^{\gamma} \left(\left|z\right|^2 +\left|y_x \right|^2 \right)dx+ 2c_0 M_{g_2 }\left(\int_{\beta-3\varepsilon}^{\gamma} |v|^2 dx\right)^{\frac{1}{2}} \left(\int_{\beta-3\varepsilon}^{\gamma} |y_x |^2 dx\right)^{\frac{1}{2}}\vspace{0.25cm}\\ \hspace{1.25cm}+\, \displaystyle 2\la^{-2} M_{g_2 }\left[\left(\int_{\beta-3\varepsilon}^{\gamma} |f^{3}_x |^2 dx \right)^{\frac{1}{2}}\left(\int_{\beta-3\varepsilon}^{\gamma} |z |^2 dx \right)^{\frac{1}{2}}+\left(\int_{\beta-3\varepsilon}^{\gamma}|f^{4} |^2 dx \right)^{\frac{1}{2}}\left(\int_{\beta-3\varepsilon}^{\gamma} |y_x |^2 dx \right)^{\frac{1}{2}}\right].
\end{array}
\end{equation*}
Finally, from the above inequality, the fact that $v$, $y_x$, $z$ are uniformly bounded in $L^2 (0,L)$ and $\displaystyle\|f^{3}_x \|_{L^2 (0,L)}=o(1)$, $\displaystyle\|f^4 \|_{L^2 (0,L)}=o(1)$, we obtain \eqref{zbps}. The proof is thus complete.
\end{proof}
\begin{lem}\label{2hsn2hynxps}{\rm Let  $h_2  \in C^1 ([0,L])$ be a function.  Under the  hypothesis \eqref{H},  the solution $U =(u ,v ,y ,z ,\eta (\cdot,\rho))^{\top}\in D(\AA)$ of  system \eqref{eq1ps}-\eqref{eq5ps} satisfies the following estimation}
		\begin{equation*}\label{2hsn2hynxeqps}
		\begin{array}{lll}
		&& \displaystyle\intdx h^{\prime}_2 \left(a^{-1}|S_b(u,v,\eta)  |^2 +|v|^2 +|z |^2 + \left|y_x \right|^2\right)dx-\left[h_2 \left(a^{-1}|S_b(u,v,\eta) |^2 +\left|y_x \right|^2 \right)\right]_{0}^{L}\vspace{0.25cm}\\
		&&\displaystyle-\,\Re\left\{2\intdx c(\cdot)h_2 v \overline{y_x  }dx\right\}+\displaystyle\Re\left\{ \frac{2}{a}\intdx c(\cdot)h_2 z \overline{S_b }(u,v,\eta)dx\right\}+\Re \left\{\frac{2i\la}{ a}\intdx b(\cdot)hv^n (\kappa_{1}\overline{v_x }+\kappa_2 \overline{\eta_x }(\cdot,1))dx\right\}\vspace{0.25cm}\\&&=\displaystyle \Re\left\{\frac{2}{\la^{2}}\intdx h_2 \overline{f^{1}_x }v dx \right\}+\Re \left\{\frac{2}{a\la^{2}}\intdx h_2 f^{2}\overline{S_b  }(u,v,\eta)dx \right\}+\Re \left\{\frac{2}{\la^{2}}\intdx h_2 \overline {f^{3}_x } z dx \right\}+\Re \left\{\frac{2}{\la^{2}}\intdx h_2 f^{4}\overline{y_x  }dx \right\}.
		\end{array}\end{equation*}
\end{lem}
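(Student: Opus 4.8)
The plan is to mimic, almost verbatim, the proof of Lemma \ref{2hsn2hynx}, adapting every step to the rescaled system \eqref{eq1ps}--\eqref{eq5ps}. First I would multiply \eqref{eq2ps} by $2a^{-1}h_2\overline{S_b}(u,v,\eta)$ and \eqref{eq4ps} by $2h_2\overline{y_x}$, integrate each over $(0,L)$ and take real parts. This produces two identities whose leading terms are $\Re\{2i\la a^{-1}\intdx h_2 v\,\overline{S_b}(u,v,\eta)\,dx\}$ and $-a^{-1}\intdx h_2(|S_b(u,v,\eta)|^2)_x\,dx$ in the first, $\Re\{2i\la\intdx h_2 z\,\overline{y_x}\,dx\}$ and $-\intdx h_2(|y_x|^2)_x\,dx$ in the second, together with the coupling terms involving $c(\cdot)$ and the source terms, which now carry the factor $\la^{-2}$ because the right-hand sides of \eqref{eq2ps} and \eqref{eq4ps} are $\la^{-2}f^2$ and $\la^{-2}f^4$.

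Next I would use \eqref{eq1ps} and \eqref{eq3ps} to write $i\la\overline{u_x}=-\overline{v_x}-\la^{-2}\overline{f^1_x}$ and $i\la\overline{y_x}=-\overline{z_x}-\la^{-2}\overline{f^3_x}$. From the first of these and the definition $S_b(u,v,\eta)=au_x+b(\cdot)(\kappa_1 v_x+\kappa_2\eta_x(\cdot,1))$ one obtains $i\la\overline{S_b}(u,v,\eta)=-a(\overline{v_x}+\la^{-2}\overline{f^1_x})+i\la b(\cdot)(\kappa_1\overline{v_x}+\kappa_2\overline{\eta_x}(\cdot,1))$. Substituting these two relations into the first term of each of the two identities from the previous step converts them into $-\intdx h_2(|v|^2+a^{-1}|S_b(u,v,\eta)|^2)_x\,dx$ plus the extra $b(\cdot)$ contribution $\Re\{\frac{2i\la}{a}\intdx b(\cdot)h_2 v(\kappa_1\overline{v_x}+\kappa_2\overline{\eta_x}(\cdot,1))\,dx\}$, and into $-\intdx h_2(|z|^2+|y_x|^2)_x\,dx$, respectively, while each of the $f$-terms picks up the $\la^{-2}$ factor appearing in the statement.

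Finally I would add the two resulting identities and integrate by parts the terms of the form $\intdx h_2(|\,\cdot\,|^2)_x\,dx$, using the boundary conditions $v(0)=v(L)=0$ and $z(0)=z(L)=0$ so that the only surviving boundary contribution is $-[h_2(a^{-1}|S_b(u,v,\eta)|^2+|y_x|^2)]_0^L$; a direct rearrangement then yields exactly the stated estimation. No step here is genuinely hard — the lemma is purely an algebraic multiplier identity — so the only ``obstacle'' is the bookkeeping: keeping track of signs and of precisely which terms carry the $\la^{-2}$ factor, and in particular making sure the coupling terms $-\Re\{2\intdx c(\cdot)h_2 v\,\overline{y_x}\,dx\}$ and $\Re\{\frac{2}{a}\intdx c(\cdot)h_2 z\,\overline{S_b}(u,v,\eta)\,dx\}$ emerge with the correct signs after the substitutions.
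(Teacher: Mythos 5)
Your proposal is correct and is exactly the paper's argument: the authors prove this lemma by simply referring to the proof of Lemma \ref{2hsn2hynx}, which uses the same multipliers $2a^{-1}h\overline{S_b}(u,v,\eta)$ and $2h\overline{y_x}$, the same substitutions from the first and third equations, and the same integration by parts with $v(0)=v(L)=z(0)=z(L)=0$; the only change is the $\la^{-2}$ factor on the source terms, which you track correctly.
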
\begin{proof}
See the proof of Lemma \ref{2hsn2hynx}.
\end{proof}\\\linebreak
Let $0<\varepsilon<\min\left(\frac{\alpha}{2},\frac{\beta-\alpha}{4}\right)$, we fix the cut-off functions $\theta_4 ,\theta_5  \in C^1 ([0,L])$ (see Figure \ref{Fig4}) such that $0\leq \theta_4(x)\leq 1$, $0\leq \theta_5(x)\leq 1$, for all $x\in[0,L]$ and 
\begin{equation*}
	\theta_4 (x)= 	\left \{ \begin{array}{lll}
	1 & \text{if} \quad \,\, x \in [0,\alpha +\varepsilon],&\vspace{0.1cm}\\
	0 &\text{if } \quad x \in [\beta -3\varepsilon,L],&
	\end{array}	\right.\text{and}\quad \theta_5 (x)= 	\left \{ \begin{array}{lll}
	0 & \text{if} \quad \,\, x \in [0,\alpha +\varepsilon],&\vspace{0.1cm}\\
	1 &\text{if } \quad x \in [\beta-3\varepsilon,L],&
	\end{array}	
	\right. 
	\end{equation*}
	\begin{figure}[h]	
		\begin{center}
			\begin{tikzpicture}
			\draw[->](1,0)--(8,0);
			\draw[->](1,0)--(1,3);
			
			
			\node[black,below] at (1,0){\scalebox{0.75}{$0$}};
			\node at (1,0) [circle, scale=0.3, draw=black!80,fill=black!80] {};
			
			\node[black,below] at (2,0){\scalebox{0.75}{$\alpha$}};
			\node at (2,0) [circle, scale=0.3, draw=black!80,fill=black!80] {};
			
			\node[black,below] at (3,0){\scalebox{0.75}{$\alpha +\varepsilon$}};
			\node at (3,0) [circle, scale=0.3, draw=black!80,fill=black!80] {};

			\node[black,below] at (4,0){\scalebox{0.75}{$\beta -3\varepsilon$}};
			\node at (4,0) [circle, scale=0.3, draw=black!80,fill=black!80] {};

			\node[black,below] at (5,0){\scalebox{0.75}{$\beta$}};
			\node at (5,0) [circle, scale=0.3, draw=black!80,fill=black!80] {};
			
			\node[black,below] at (6,0){\scalebox{0.75}{$\gamma$}};
			\node at (6,0) [circle, scale=0.3, draw=black!80,fill=black!80] {};
			
			\node[black,below] at (7,0){\scalebox{0.75}{$L$}};
			\node at (7,0) [circle, scale=0.3, draw=black!80,fill=black!80] {};
			
			
			\node at (1,2) [circle, scale=0.3, draw=black!80,fill=black!80] {};

			\node[black,left] at (1,2){\scalebox{0.75}{$1$}};
			
			\node[black,right] at (8.5,3){\scalebox{0.75}{$\theta_4$}};
			\node[black,right] at (8.5,2.5){\scalebox{0.75}{$\theta_5$}};

			\draw[-,red](1,2)--(3,2);
			\draw [red] (3,2) to[out=0.40,in=180] (4,0) ;
			\draw[-,red](4,0)--(7,0);
			
			\draw[-,blue](1,0)--(3,0);
			\draw [blue] (3,0) to[out=0.40,in=180] (4,2) ;
			\draw[-,blue](4,2)--(7,2);

			\draw[-,red](8,3)--(8.5,3);
			\draw[-,blue](8,2.5)--(8.5,2.5);
	
			\end{tikzpicture}
		\end{center}
		\caption{Geometric description of  the functions $\theta_4$ and $\theta_5$.}\label{Fig4}
	\end{figure}
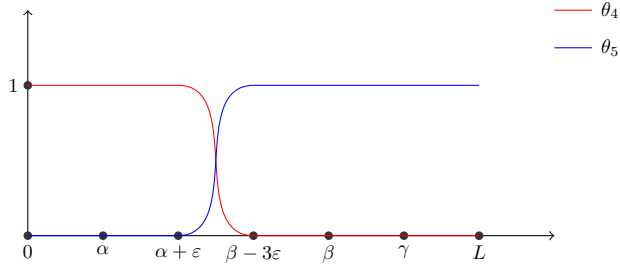
\begin{lem}\label{lastestimation-pol}
\rm{Let $0<\varepsilon <\min\left(\frac{\alpha}{2},\frac{\beta -\alpha }{4} \right)$. Under the  hypothesis \eqref{H},  the solution $U =(u ,v ,y ,z ,\eta (\cdot, \rho) )^{\top}\in D(\AA )$ of the System \eqref{eq1ps}-\eqref{eq5ps} satisfies the following estimations}
 \begin{eqnarray}
 \int_{0}^{\alpha +\varepsilon}|v|^2 dx+ \int_{0}^{\alpha +\varepsilon}|y_x |^2 dx +\int_{0}^{\alpha +\varepsilon} |z|^2 dx =o(1),\label{resulycutoff4}\\
 	a\int_{\beta }^{L }|u_x |^2 dx +\int_{\beta -3\varepsilon }^{L }|v|^2 dx+ \int_{\beta -3\varepsilon}^{L}|y_x |^2 dx +\int_{\beta -3\varepsilon}^{L} |z|^2 dx =o(1)\label{resultcutoff5}.
 \end{eqnarray}
\end{lem}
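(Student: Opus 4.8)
The plan is to mirror the proof of Lemma~\ref{intcutoff1}: I would use the multiplier identity of Lemma~\ref{2hsn2hynxps} twice, once with $h_2=x\theta_4$ to get \eqref{resulycutoff4} and once with $h_2=(x-L)\theta_5$ to get \eqref{resultcutoff5}. For \eqref{resulycutoff4}: since $\theta_4\equiv1$ on $[0,\alpha+\varepsilon]$ and $\theta_4\equiv0$ on $[\beta-3\varepsilon,L]$, one has $h_2(0)=h_2(L)=0$, so the boundary bracket in Lemma~\ref{2hsn2hynxps} vanishes, $h_2'\equiv1$ on $[0,\alpha+\varepsilon]$, and, using the definitions of $b(\cdot)$, $c(\cdot)$ and $S_b$ (recall $S_b=S_1$ on $(0,\beta)$), the identity collapses to
\[
\int_0^{\alpha+\varepsilon}\!\!\big(a^{-1}|S_1(u,v,\eta)|^2+|v|^2+|z|^2+|y_x|^2\big)\,dx=-\!\int_{\alpha+\varepsilon}^{\beta-3\varepsilon}\!\!\big(\theta_4+x\theta_4'\big)\big(\,\cdots\,\big)\,dx+\mathcal R,
\]
where $\mathcal R$ collects the two coupling terms (supported in $[\alpha,\beta-3\varepsilon]$), the $b(\cdot)$–term and the right-hand side of the identity. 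By Lemma~\ref{firstlemmaps} we have $\int_0^\beta|S_1|^2=o(\lambda^{-2})$, $\int_0^\beta|v_x|^2=o(\lambda^{-2})$ and $\int_0^\beta|\eta_x(\cdot,1)|^2=o(\lambda^{-2})$, and since $v,z,y_x$ are bounded in $L^2(0,L)$ and $F\to0$ in $\HH$, Cauchy--Schwarz gives $\mathcal R=o(1)$; by Lemmas~\ref{intvpslemma} and \ref{intzyxpslemma} the integral over $[\alpha+\varepsilon,\beta-3\varepsilon]$ is $o(1)$. Hence the left-hand side is $o(1)$, and positivity of the integrand yields \eqref{resulycutoff4}.

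For \eqref{resultcutoff5} I would apply Lemma~\ref{2hsn2hynxps} with $h_2=(x-L)\theta_5$. Again $h_2(0)=h_2(L)=0$, $h_2'\equiv1$ on $[\beta-3\varepsilon,L]$, and since $S_b=au_x$ on $(\beta,L)$ the $h_2'$–integral reproduces exactly $a\int_\beta^L|u_x|^2+\int_{\beta-3\varepsilon}^L(|v|^2+|z|^2+|y_x|^2)$ up to the $o(\lambda^{-2})$ contribution of $a^{-1}|S_1|^2$ on $(\beta-3\varepsilon,\beta)$ and an $o(1)$ integral over $[\alpha+\varepsilon,\beta-3\varepsilon]$; the $b(\cdot)$–term and the right-hand side are $o(1)$ as before. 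The genuine difficulty is that the coupling terms $\Re\{2\int c(\cdot)h_2\, v\,\overline{y_x}\}$ and $\Re\{\tfrac2a\int c(\cdot)h_2\, z\,\overline{S_b}\}$ are now supported in $[\alpha+\varepsilon,\gamma]$, and on $[\beta,\gamma]$ there is no Kelvin--Voigt dissipation ($b\equiv0$). To handle this I would first establish the auxiliary estimate $\int_\beta^\gamma\big(a|u_x|^2+|v|^2+|z|^2+|y_x|^2\big)\,dx=o(1)$. On $(\beta,\gamma)$ the system \eqref{eq1ps}--\eqref{eq4ps} reduces, exactly as in Lemma~\ref{matrix}, to a first-order system $V_x=BV+G$ with $V=(u,u_x,y,y_x)^\top$ and $\|G\|_{L^2(\beta,\gamma)}=o(\lambda^{-1})$ (from the $\lambda^{-2}$ order of the right-hand side); the dispersion relation of the homogeneous system is purely oscillatory, so I expect $e^{B\zeta}$ to stay bounded uniformly in $\lambda$, and the variation-of-constants formula then reduces the interior norm to the data $V(\beta^+)$. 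The traces at $\beta$ are good: $|v(\beta^-)|=o(\lambda^{-1})$ and $a|u_x(\beta^+)|^2=a^{-1}|(S_1(u,v,\eta))(\beta^-)|^2=o(\lambda^{-1})$ (the latter from $\int_0^\beta|S_1|^2=o(\lambda^{-2})$, $\|(S_1)_x\|_{L^2(0,\beta)}=O(1)$ via \eqref{eq2ps}, and a one-dimensional trace inequality), while $|u(\beta)|$, $|y(\beta)|$, $|y_x(\beta)|$ are controlled from these together with Lemmas~\ref{firstlemmaps}--\ref{intzyxpslemma} and \eqref{eq1ps},\eqref{eq3ps}. With this auxiliary bound available, I would insert it into the identity, use Young's inequality to absorb $\delta\int_\beta^\gamma(a|u_x|^2+|y_x|^2)$ into the left-hand side, estimate the remaining terms via Lemmas~\ref{firstlemmaps}--\ref{boundariesps}, and conclude that the left-hand side is $o(1)$; positivity then gives \eqref{resultcutoff5}.

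The main obstacle is precisely this control on the coupling-but-undamped window $[\beta,\gamma]$: the multiplier with $h_2=(x-L)\theta_5$ feeds back the singular coupling integrals over $[\beta,\gamma]$, and the a priori boundary data there provided by Lemma~\ref{boundariesps} are only $O(1)$, so closing the estimate forces one to use simultaneously the $\lambda^{-2}$ order of the right-hand side, the sharp $o(\lambda^{-1})$ traces at $\beta^+$, and the uniform boundedness of the transfer matrix $e^{B\zeta}$ — the most technical point of the argument. By contrast, \eqref{resulycutoff4} is routine once Lemmas~\ref{firstlemmaps}--\ref{intzyxpslemma} are in place, because there the multiplier $x\theta_4$ is supported inside the region already controlled by those lemmas.
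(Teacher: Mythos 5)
Your argument for \eqref{resulycutoff4} is essentially the paper's own proof (Lemma \ref{2hsn2hynxps} with $h_2=x\theta_4$, boundary bracket vanishing, middle region controlled by Lemmas \ref{firstlemmaps}--\ref{intzyxpslemma}), and it is correct. The problem is your treatment of \eqref{resultcutoff5}, where the route you propose for the window $[\beta,\gamma]$ does not close. First, the key claim that $e^{B\zeta}$ "stays bounded uniformly in $\lambda$" is false for the unscaled vector $V=(u,u_x,y,y_x)^{\top}$: already for the scalar model $w_{xx}+\lambda^2w=0$ the fundamental matrix has an entry $-\lambda\sin(\lambda\zeta)$, so $\|e^{B\zeta}\|\sim|\lambda|$. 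The argument of Lemma \ref{matrix} works in the strong-stability section only because there $\lambda^n\to\omega$ is bounded; it does not transfer to the high-frequency regime. Second, even if you repair this by the natural rescaling $(\lambda u,u_x,\lambda y,y_x)$ (which does make the propagator uniformly bounded), the variation-of-constants bound requires smallness of the rescaled trace data at $\beta^+$, and while the $u$-component traces are indeed small ($|\lambda u(\beta)|=o(\lambda^{-1})$, $|u_x(\beta^+)|=o(\lambda^{-1/2})$, as you note), the $y$-component traces $\lambda y(\beta)\approx z(\beta)$ and $y_x(\beta)$ are not: Lemma \ref{boundariesps} only gives $O(1)$ bounds, and only at $\beta-3\varepsilon$ and $\gamma$, while Lemma \ref{intzyxpslemma} stops at $\beta-3\varepsilon$. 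So your auxiliary estimate $\int_\beta^\gamma\bigl(a|u_x|^2+|v|^2+|z|^2+|y_x|^2\bigr)dx=o(1)$ — which is essentially the statement you are trying to prove — comes out at best $O(1)$ by this method, and an $O(1)$ bound cannot be absorbed into the left-hand side of the multiplier identity.

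The paper closes this step differently, and this is the idea missing from your proposal: it keeps the two coupling integrals, uses \eqref{eq1ps} and \eqref{eq3ps} to substitute $\overline{u_x}=i\lambda^{-1}\overline{v_x}+i\lambda^{-3}\overline{f^1_x}$ and $\overline{y_x}=i\lambda^{-1}\overline{z_x}+i\lambda^{-3}\overline{f^3_x}$, thereby gaining a factor $\lambda^{-1}$, and then integrates by parts so that the two derivative terms combine into $\Re\bigl\{\tfrac{2c_0i}{\lambda}\int_{\beta-3\varepsilon}^{\gamma}z\overline{v}\,dx\bigr\}$ plus a boundary bracket $\tfrac{2c_0}{\lambda}\bigl[(x-L)z\overline v\bigr]_{\beta-3\varepsilon}^{\gamma}$. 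Because of the prefactor $\lambda^{-1}$, the merely $O(1)$ trace bounds of Lemma \ref{boundariesps} and the uniform $L^2$-boundedness of $v,z$ are then sufficient to conclude that both contributions are $O(|\lambda|^{-1})=o(1)$, so no interior smallness on $(\beta,\gamma)$ is needed at all. In short: your diagnosis of the obstacle is right, but the ODE/transfer-matrix resolution you propose fails (unbounded propagator, insufficient trace data for the undamped component), whereas the intended mechanism is the $\lambda^{-1}$ gain from the resolvent equations combined with integration by parts, which is precisely what Lemma \ref{boundariesps} was designed to feed.
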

\begin{proof} 
First, using the result of Lemma \ref{2hsn2hynxps} with $h_2 =x\theta_4 $, we obtain 
	\begin{equation*}\label{eq1cutof4ps}
		\begin{array}{lll}
	 &&	\displaystyle	\int_{0}^{\alpha +\varepsilon}|v|^2 dx+ \int_{0}^{\alpha +\varepsilon}|y_x |^2 dx +\int_{0}^{\alpha +\varepsilon} |z|^2 dx =-\,a^{-1}\int_{0 }^{\alpha +\varepsilon}|S_1(u,v,\eta) |^2 dx \vspace{0.25cm} \\
	 && \displaystyle -\,\int_{\alpha +\varepsilon}^{\beta - 3\varepsilon}\left( \theta_4 +x \theta_4^{\prime}\right)\left(a^{-1}|S_1(u,v,\eta) |^2 +|v|^2 +|y_x |^2 +|z|^2 \right)dx +\Re\left\{2\intdx xc(\cdot)\theta_4 v\overline{y_x }dx \right\}\vspace{0.25cm}\\ 
	  &&-\, \displaystyle \Re\left\{\frac{2}{a}\intdx xc(\cdot)\theta_4 z\overline{S_b }(u,v,\eta)dx \right\} -\Re\left\{\frac{2i\la}{ a}\intdx xb(\cdot)\theta_4 v \left(\kappa_1 \overline{v_x }+\kappa_2 \overline{\eta_x }(.,1)\right) dx  \right\}+ \Re\left\{\frac{2}{\la^{2}}\intdx x\theta_4 \overline{f^{1}_x }v dx \right\} \vspace{0.25cm}\\
	  &&+\, \displaystyle \displaystyle \Re \left\{\frac{2}{a\la^{2}}\intdx x\theta_4f^{2}\overline{S_b  }(u,v,\eta)dx \right\}+\Re \left\{\frac{2}{\la^{2}}\intdx x\theta_4 \overline {f^{3}_x } z dx \right\}+\Re \left\{\frac{2}{\la^{2}}\intdx x\theta_4  f^{4}\overline{y_x  }dx \right\}.
		\end{array}
	\end{equation*}From the above Equation and by using Lemmas \ref{firstlemmaps}, \ref{intvpslemma}, \ref{intzyxpslemma} with the fact that $v$, $y_x $, $z$ are uniformly bounded in $L^2 (0,L)$ and $\displaystyle\|f^{1}_x \|_{L^2 (0,L)}=o(1)$, $\displaystyle\|f^{3}_x \|_{L^2 (0,L)}=o(1)$, $\displaystyle\|f^{4} \|_{L^2 (0,L)}=o(1)$, we obtain 
\begin{equation}\label{4.51}
	\begin{array}{lll}
	\displaystyle	\int_{0}^{\alpha +\varepsilon}|v|^2 dx+ \int_{0}^{\alpha +\varepsilon}|y_x |^2 dx +\int_{0}^{\alpha +\varepsilon} |z|^2 dx =\displaystyle \Re\left\{2\intdx xc(\cdot)\theta_4 v\overline{y_x }dx \right\}\vspace{0.25cm}\\
	\displaystyle -\, \Re\left\{\frac{2}{a}\intdx xc(\cdot)\theta_4 z\overline{S_b }(u,v,\eta)dx \right\}+\Re \left\{\frac{2}{a\la^{2}}\intdx x\theta_4f^{2}\overline{S_b  }(u,v,\eta)dx \right\}\vspace{0.25cm}\\\displaystyle-\,\Re\left\{\frac{2i\la}{ a}\intdx xb(\cdot)\theta_4 v \left(\kappa_1 \overline{v_x }+\kappa_2 \overline{\eta_x }(.,1)\right) dx  \right\}+o(1).
	\end{array}
\end{equation}
Using the definition of $b(\cdot)$, $c(\cdot)$, $S_b(u,v,\eta) $, $\theta_4 $, then using Cauchy-Schwarz inequality, we obtain
\begin{equation*}
\left\{\begin{array}{lll}
\displaystyle \left|\Re\left\{2\intdx xc(\cdot)\theta_4 v\overline{y_x }dx \right\}\right|=\left|\Re\left\{2c_0 \int_{\alpha}^{\beta-3\varepsilon} x\theta_4 v\overline{y_x }dx \right\}\right|\leq 2c_0 (\beta -3\varepsilon)\left(\int_{\alpha }^{\beta -3\varepsilon}|v |^2 dx \right)^{\frac{1}{2}}\left(\int_{\alpha }^{\beta -3\varepsilon} |y_x |^2 dx\right)^{\frac{1}{2}},\vspace{0.25cm}\\
	\displaystyle\left|\Re\left\{\frac{2}{a}\intdx xc(\cdot)\theta_4 z\overline{S_b }(u,v,\eta)dx \right\}\right|=	\displaystyle\left|\Re\left\{\frac{2c_0}{a} \int_{\alpha}^{\beta-3\varepsilon} x\theta_4 z\overline{S_1 }(u,v,\eta)dx \right\}\right|\vspace{0.25cm}\\\hspace{5.25cm}
	\leq \displaystyle \frac{2c_0}{a}(\beta -3\varepsilon)\left(\int_{\alpha}^{\beta-3\varepsilon} |z|^2 dx\right)^{\frac{1}{2}}\left(\int_{\alpha }^{\beta -3\varepsilon}|S_1(u,v,\eta) |^2 dx \right)^{\frac{1}{2}},\vspace{0.25cm}\\
	\displaystyle\left|\Re \left\{\frac{2}{a\la^{2}}\intdx x\theta_4  f^{2}\overline{S_b  }(u,v,\eta)dx \right\}\right|=	\displaystyle\left|\Re \left\{\frac{2}{a\la^{2}}\int_{0}^{\beta-3\varepsilon} x\theta_4  f^{2}\overline{S_1  }(u,v,\eta)dx \right\}\right| \vspace{0.25cm}\\\hspace{5.25cm}
\leq \displaystyle  \frac{2(\beta-3\varepsilon)}{a\la^2 }\left(\int_{0}^{\beta-3\varepsilon}|f^2|^2 dx\right)^{\frac{1}{2}}\left(\int_{0}^{\beta-3\varepsilon}|S_1(u,v,\eta)|^2 dx\right)^{\frac{1}{2}},\vspace{0.25cm}\\
 \displaystyle	\left|\Re\left\{\frac{2i\la} {a}\intdx xb(\cdot)\theta_4 v \left(\kappa_1 \overline{v_x }+\kappa_2 \overline{\eta_x }(.,1)\right) dx  \right\}\right|=	\left|\Re\left\{\frac{2i\la}{a}\int_{0}^{\beta-3\varepsilon} x\theta_4 v \left(\kappa_1 \overline{v_x }+\kappa_2 \overline{\eta_x }(.,1)\right) dx  \right\}\right|\vspace{0.25cm}\\\hspace{3.5cm}
 \displaystyle \leq 
 \frac{2|\la|(\beta-3\varepsilon)}{a}\left[\kappa_{1}\left(\int_{0}^{\beta-3\varepsilon}|v_x|^2 dx\right)^{\frac{1}{2}}+|\kappa_2|\left(\int_{0}^{\beta-3\varepsilon}|\eta_x(\cdot,1)|^2 dx\right)^{\frac{1}{2}}\right]\left(\int_{0}^{\beta-3\varepsilon}|v|^2dx\right)^{\frac{1}{2}}.
	\end{array}
	\right.
	\end{equation*}
	Thus, from the above inequalities, Lemmas \ref{firstlemmaps}, \ref{intvpslemma} and the fact that $u_x $, $v$, $y_x $, $z$ are uniformly bounded in $L^2 (0,L)$ and $\|f^2 \|_{L^2 (0,L)}=o(1)$, we obtain   
\begin{equation}\label{4.54}
\begin{array}{lll}
\displaystyle	\Re\left\{2\intdx xc(\cdot)\theta_4 v\overline{y_x }dx \right\} = o(1), \quad
	-\Re\left\{\frac{2}{a}\intdx xc(\cdot)\theta_4 z\overline{S_b }(u,v,\eta)dx \right\}= o(|\la |^{-1}), \vspace{0.25cm}\\
	\displaystyle \Re \left\{\frac{2}{a\la^{2}}\intdx x\theta_4  f^{2}\overline{S_b  }(u,v,\eta)dx \right\}= o(\la^{-2}),
\ \ -\Re\left\{\frac{2i\la} {a}\intdx xb(\cdot)\theta_4 v \left(\kappa_1 \overline{v_x }+\kappa_2 \overline{\eta_x }(.,1)\right) dx\right\}= o(1).
\end{array}
\end{equation}
Therefore, by inserting \eqref{4.54} in \eqref{4.51}, we obtain \eqref{resulycutoff4}.
On the other hand, 
using the result of Lemma \ref{2hsn2hynxps} with $h=(x-L)\theta_5 $, then using the definition of $b(\cdot)$, $S_b $, $\theta_5 $ and Lemmas \ref{firstlemmaps}, \ref{intvpslemma}, \ref{intzyxpslemma} with the fact that $u_x $, $v$, $y_x $, $z$ are uniformly bounded in $L^2 (0,L)$ and $\displaystyle\|f^{1}_x \|_{L^2 (0,L)}=o(1)$, $\displaystyle\|f^2 \|_{L^2 (0,L)}=o(1)$, $\displaystyle\|f^{3}_x \|_{L^2 (0,L)}=o(1)$, $\displaystyle\|f^4 \|_{L^2 (0,L)}=o(1)$, we obtain
	\begin{equation}\label{4.58}
	\begin{array}{lll}
&&\displaystyle	a\int_{\beta }^{L }|u_x |^2 dx +\int_{\beta -3\varepsilon }^{L }|v|^2 dx+ \int_{\beta -3\varepsilon}^{L}|y_x |^2 dx +\int_{\beta -3\varepsilon}^{L} |z|^2 dx \vspace{0.25cm}\\ &&= \displaystyle  \Re\left\{2\intdx (x-L)c(\cdot)\theta_5 v\overline{y_x }dx  \right\}-\Re\left\{2a^{-1}\intdx (x-L)c(\cdot)\theta_5  z \overline{S_b }dx \right\}  +o(1).
	\end{array}
	\end{equation}Moreover, from the definition of  $c(\cdot)$, $S_b $, $\theta_5 $ and by using  Lemmas \ref{firstlemmaps}, \ref{intvpslemma} with the fact that $y_x $, $z$ are uniformly bounded in $L^2 (0,L)$, we obtain 
	\begin{equation}\label{4.59}
		\begin{array}{lll}
	&&\displaystyle	\Re\left\{2\intdx (x-L)c(\cdot)\theta_5 v\overline{y_x }dx  \right\}-\Re\left\{2a^{-1}\intdx (x-L)c(\cdot)\theta_5  z \overline{S_b }dx \right\}  \vspace{0.25cm}\\ &&=\displaystyle \Re \left\{ 2c_0 \int_{\beta -3\varepsilon}^{\gamma }(x-L)v\overline{y_x } dx \right\}-\Re \left\{ 2c_0 \int_{\beta -3\varepsilon}^{\gamma }(x-L)z\overline{u_x } dx \right\}+o(1).
		\end{array}
	\end{equation}From  \eqref{eq1ps} and \eqref{eq3ps}, we deduce that 
	\begin{equation}
		\overline{u_x }= i\la^{-1}\overline{v_x } +i\la^{-3}\overline{f^{1}_x }\quad \text{and}\quad
			\overline{y_x }=i\la^{-1}\overline{z_x } +i\la^{-3}\overline{f^{3}_x }.\label{4.61}
	\end{equation}
	Substituting  \eqref{4.61} in the right hand side of  \eqref{4.59}, then using  the fact that $v$, $z$ are uniformly bounded in $L^2 (0,L)$ and $\displaystyle\|f^{1}_x \|_{L^2 (0,L)}=o(1)$, $\displaystyle\|f^{3}_x \|_{L^2 (0,L)}=o(1)$, we obtain 
	\begin{equation*}\label{4.62}
		\begin{array}{lll}
		 &&\displaystyle \Re\left\{2\intdx (x-L)c(\cdot)\theta_5 v\overline{y_x }dx  \right\}-\Re\left\{2a^{-1}\intdx (x-L)c(\cdot)\theta_5  z \overline{S_b }dx \right\} \vspace{0.25cm}\\ &&= \displaystyle \Re \left\{ \frac{2c_0 i }{\la} \int_{\beta -3\varepsilon}^{\gamma }(x-L)v\overline{z_x } dx \right\}-\Re \left\{ \frac{2c_0 i}{\la}\int_{\beta -3\varepsilon}^{\gamma }(x-L)z\overline{v_x } dx \right\}+o(1).
		\end{array}
	\end{equation*}
	Using integration by parts to the second integral in the right hand side of the above equation, we obtain 
	\begin{equation}\label{4.63}
	\begin{array}{lll}
	&&\displaystyle \Re\left\{2\intdx (x-L)c(\cdot)\theta_5 v\overline{y_x }dx  \right\}-\Re\left\{2a^{-1}\intdx (x-L)c(\cdot)\theta_5  z \overline{S_b }dx \right\} \vspace{0.25cm}\\ & & =\displaystyle \Re \left\{\frac{ 2c_0 i}{\la}\int_{\beta -3\varepsilon}^{\gamma }z\overline{v} dx \right\}-\Re \left\{ \frac{2c_0 i}{ \la} \left[\left(x-L\right)z\overline{v } \right]_{\beta -3\varepsilon}^{\gamma } \right\}+o(1).
	\end{array}
	\end{equation}
	Furthermore, by using Cauchy-Schwarz inequality, we get
	\begin{equation}\label{4.57}
		\left| \Re \left\{\frac{2c_0 i}{\la}\int_{\beta -3\varepsilon}^{\gamma }z\overline{v}dx \right\}\right|\leq 2c_0 |\la|^{-1}\left(\int_{\beta-3\varepsilon}^{\gamma} |z|^2 dx \right)^{\frac{1}{2}}\left(\int_{\beta-3\varepsilon}^{\gamma} |v|^2 dx \right)^{\frac{1}{2}}
	\end{equation}and 
	\begin{equation}\label{4.58psp}
	 \left|\Re\left\{\frac{2c_0 i}{\la}\left[ \left(x-L\right)z\overline{v}\right]_{\beta -3\varepsilon}^{\gamma } \right\}\right|\leq 2c_0 |\la|^{-1}\left[\left(L-\gamma \right)|z(\gamma )||v(\gamma )|+(L-\beta +3\varepsilon)|z(\beta -3\varepsilon)|\left|v(\beta -3\varepsilon)\right| \right].
	\end{equation}  
	From Lemma \ref{boundariesps}, we deduce that \begin{equation}\label{4.58ps}
		|v(\beta -3\varepsilon)|=O(1), \quad 	|v(\gamma )|=O(1), \quad 	|z(\beta -3\varepsilon)|=O(1)\quad \text{and} \quad	|z(\gamma )|=O(1).
	\end{equation}
	Using the fact that $v$, $z$ are uniformly bounded in $L^2 (0,L)$ in \eqref{4.57} and inserting \eqref{4.58ps} in \eqref{4.58psp}, we obtain 
	\begin{equation}\label{4.60}
	\Re \left\{\frac{2c_0 i}{\la}\int_{\beta -3\varepsilon}^{\gamma }z\overline{v}dx \right\}=O\left(|\la|^{-1}\right)=o(1) \quad\text{and}\quad -\Re\left\{\frac{2c_0 i}{\la}\left[ (x-L)z\overline{v}\right]_{\beta -3\varepsilon}^{\gamma } \right\}=O\left(|\la|^{-1}\right)=o(1).
	\end{equation}
	Inserting \eqref{4.60} in \eqref{4.63}, we get 
	\begin{equation}\label{4.59ps}
	\Re\left\{2\intdx (x-L)c(\cdot)\theta_5 v\overline{y_x }dx  \right\}-	\Re\left\{2a^{-1}\intdx (x-L)c(\cdot)\theta_5  z \overline{S_b }dx \right\} = o(1).
	\end{equation}Finally, inserting \eqref{4.59ps} in \eqref{4.58}, we obtain \eqref{resultcutoff5}. The proof is thus complete.
\end{proof}
\\\linebreak
\textbf{Proof of Theorem \ref{polthm}.} The proof of Theorem    is divided into three steps.\\
\textbf{Step 1.} From Lemmas \ref{firstlemmaps}-\ref{intzyxpslemma}, we obtain 
\begin{equation}\label{4.53}
\left\{\begin{array}{lll}
\displaystyle
\int_0^{\beta}|u_x|^2dx=o(\la^{-4}), \ \int_{0}^{\beta}\int_{0}^{1}|\eta_x (\cdot,\rho)|^2 d\rho dx =o(\la^{-2}),\ \int_{\varepsilon}^{\beta-\varepsilon}|v|^2 dx =o(1),\vspace{0.25cm}\\ 
\displaystyle \int_{\alpha}^{\beta-2\varepsilon}|z|^2 dx =o(1)\ \text{and}\  \int_{\alpha+\varepsilon}^{\beta-3\varepsilon}|y_x|^2 dx =o(1).
\end{array}\right.
\end{equation}\\\linebreak
\textbf{Step 2.} From Lemma \ref{lastestimation-pol} and \eqref{4.53}, we deduce that 
\begin{equation*}
\left\{\begin{array}{lll}
\displaystyle \int_{0}^{\varepsilon}|v|^2 dx =o(1), \ \int_{0}^{\alpha+\varepsilon}|y_x|^2 dx =o(1), \ \int_{0}^{\alpha}|z|^2 dx =o(1),\vspace{0.25cm}\\
\displaystyle 
\int_{\beta}^{L}|u_x|^2 dx =o(1), \ \int_{\beta-\varepsilon}^{L}|v|^2 dx =o(1), \ \int_{\beta-3\varepsilon}^{L}|y_x|^2 dx =o(1) \ \text{and}\  \int_{\beta-2\varepsilon}^{L}|z|^2 dx =o(1).
	\end{array}
	\right.
\end{equation*}
According to \textbf{Step 1} and \textbf{Step 2}, we obtain $\|U\|_{\HH}=o(1)$ in $(0,L)$, which contradicts \eqref{contra-pol2}. Thus, \eqref{Condition-2-pol} is holds true. Next, since the conditions \eqref{4.1} and \eqref{Condition-2-pol} are proved, then  according to Theorem \ref{bt}, the proof of Theorem \ref{polthm} is achieved. The proof is thus complete.\xqed{$\square$} 
\section{Conclusion}
 We have studied the stabilization of a one-dimensional coupled wave equations with non smooth localized  viscoelastic damping of Kelvin-Voigt type and localized  time delay. We proved the strong stability of the system by using Arendt-Batty criteria. Finally, we established a polynomial energy decay rate of order $t^{-1}$.
\appendix
\section{Some notions and theorems of stability has been used}
\noindent In order to make this paper more self-contained, we have introduced this short appendix that brings up the notions of stability that we encounter in this work. 
\begin{defi}\label{App-Definition-A.1}{\rm
Assume that $A$ is the generator of $C_0-$semigroup of contractions $\left(e^{tA}\right)_{t\geq0}$ on a Hilbert space $H$. The $C_0-$semigroup $\left(e^{tA}\right)_{t\geq0}$ is said to be 
\begin{enumerate}
\item[$(1)$] Strongly stable if 
$$
\lim_{t\to +\infty} \|e^{tA}x_0\|_H=0,\quad \forall\, x_0\in H.
$$
\item[$(2)$] Exponentially (or uniformly) stable if there exists two positive constants $M$ and $\varepsilon$ such that 
$$
\|e^{tA}x_0\|_{H}\leq Me^{-\varepsilon t}\|x_0\|_{H},\quad \forall\, t>0,\ \forall\, x_0\in H.
$$
\item[$(3)$] Polynomially stable if there exists two positive constants $C$ and $\alpha$ such that 
$$
\|e^{tA}x_0\|_{H}\leq Ct^{-\alpha}\|A x_0\|_{H},\quad \forall\, t>0,\ \forall\, x_0\in D(A).
$$
\xqed{$\square$}
\end{enumerate}}
\end{defi}
\noindent For proving the strong stability of the $C_0$-semigroup $\left(e^{tA}\right)_{t\geq0}$, we will recall the result obtained by Arendt and Batty in \cite{Arendt01}. 
\begin{Theorem}[Arendt and Batty in \cite{Arendt01}]\label{App-Theorem-A.2}{\rm
{Assume that $A$ is the generator of a C$_0-$semigroup of contractions $\left(e^{tA}\right)_{t\geq0}$  on a Hilbert space $H$. If $A$ has no pure imaginary eigenvalues and  $\sigma\left(A\right)\cap i\mathbb{R}$ is countable,
where $\sigma\left(A\right)$ denotes the spectrum of $A$, then the $C_0$-semigroup $\left(e^{tA}\right)_{t\geq0}$  is strongly stable.}\xqed{$\square$}}
\end{Theorem}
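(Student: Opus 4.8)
The statement is the classical Arendt--Batty (equivalently Lyubich--V\~{u}) theorem, and the plan is to reproduce their argument, which converts strong stability into a spectral--Tauberian statement about the boundary behaviour of the resolvent.

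\emph{Reduction.} First I would use that a contraction semigroup is uniformly bounded, $\sup_{t\ge0}\|e^{tA}\|\le1$, with $\{\lambda\in\C:\operatorname{Re}\lambda>0\}\subset\rho(A)$ and $\|(\lambda I-A)^{-1}\|\le(\operatorname{Re}\lambda)^{-1}$. By density of $D(A)$ together with uniform boundedness it suffices to prove $\|e^{tA}x_0\|\to0$ for $x_0\in D(A)$; for such $x_0$ the orbit $u(t):=e^{tA}x_0$ is Lipschitz, hence bounded and uniformly continuous, and its Laplace transform is the resolvent, $\int_0^\infty e^{-\lambda t}u(t)\,dt=(\lambda I-A)^{-1}x_0$ for $\operatorname{Re}\lambda>0$. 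I would also record that for a contraction semigroup the hypothesis ``$A$ has no imaginary eigenvalue'' is equivalent to ``$A^*$ has none'', which is the form the argument actually needs: if $Au=i\beta u$ then $\|e^{tA}u\|=\|u\|$, and the elementary inequality $\|C^*Cu-u\|^2\le0$ (valid for any contraction $C$ with $\|Cu\|=\|u\|$, from $\langle C^*Cu,u\rangle=\|Cu\|^2$ and $\|C^*Cu\|\le\|u\|$) applied to $C=e^{tA}$ forces $(e^{tA})^*u=e^{-i\beta t}u$, i.e.\ $A^*u=-i\beta u$.

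\emph{Strategy.} Set $E:=\sigma(A)\cap i\mathbb{R}$, a countable, hence scattered, closed subset of $i\mathbb{R}$. On $i\mathbb{R}\setminus E$ the resolvent extends analytically, so the only possible boundary singularities of $\hat u$ sit over the countable set $E$. The core is a Tauberian theorem: a bounded uniformly continuous $u:\R^+\to H$ whose Laplace transform extends continuously to $i\mathbb{R}\setminus E$ (with $E$ countable and closed) and carries no nonzero asymptotic mode must satisfy $u(t)\to0$. I would establish this by transfinite induction on the Cantor--Bendixson derivatives $E^{(\alpha)}$, which terminate at $\emptyset$ precisely because $E$ is countable: at each step one removes an isolated point $i\omega$ of the current derived set, where the local Laurent expansion of the resolvent, combined with the a priori bound $\|(\lambda I-A)^{-1}\|\le(\operatorname{Re}\lambda)^{-1}$, shows the singularity is at worst a simple pole whose Riesz projection has range inside $\ker(A-i\omega I)$. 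Since $A$ and $A^*$ have no imaginary eigenvalues, that projection vanishes and the associated contribution is annihilated.

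\emph{Conclusion and main obstacle.} Peeling off all generations of $E$ removes every boundary obstruction, and the Tauberian estimate yields $\lim_{t\to\infty}\|e^{tA}x_0\|=0$ for $x_0\in D(A)$, hence for all $x_0\in H$ by the density reduction, which is strong stability. The hard part is the Tauberian step itself: turning purely local boundary-regularity information about $\hat u$ off a measure-zero countable set into genuine strong decay of $u$. The difficulties are that accumulation points of $E$ are not directly accessible (forcing the transfinite rather than a one-shot induction), that the interior spectrum $\sigma(A)\cap\{\operatorname{Re}\lambda<0\}$ may be large and must be shown not to interfere, and that at each isolated boundary point one must extract the simple-pole structure from the contraction bound and then invoke the no-eigenvalue hypothesis to kill the residue. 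Making these residue and contour computations precise --- rather than the soft functional-analytic scaffolding --- is where essentially all the work resides.
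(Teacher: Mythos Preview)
The paper does not prove this theorem at all: Theorem~\ref{App-Theorem-A.2} is stated in the appendix as a classical result from the literature, cited from \cite{Arendt01}, and the statement simply ends with a $\square$. The theorem is used as a black box (invoked in the proof of Theorem~\ref{p1-strongthm2}), and no argument is supplied or expected within the paper.

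Your proposal is therefore not comparable to anything in the paper. What you have written is a reasonable outline of the original Arendt--Batty/Lyubich--V\~{u} proof via Tauberian methods and Cantor--Bendixson induction on the boundary spectrum, and it is broadly correct as a sketch of that classical argument. But for the purposes of this paper no proof is required; the appropriate ``proof'' here is simply a citation.
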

\noindent There exist a second classical method based on Arendt and Batty theorem and the contradiction argument  (see  page 25 in \cite{LiuZheng01}).
\begin{Remark}\label{App-Lemma-A.3}{\rm
 Assume that the unbounded linear operator $A:D(A)\subset H \longmapsto H$ is the generator of a C$_0-$semigroup of contractions $\left(e^{tA}\right)_{t\geq0}$  on a Hilbert space $H$ and suppose that $0\in \rho(A).$ According  to  (page 25 in \cite{LiuZheng01}), in order to prove that \begin{equation}\label{A1}
 \displaystyle	i\R \equiv \left\{ i\la\  | \ \ \la\in\R \right\} \subseteq \rho(A),
\end{equation} we need the following steps:
\begin{enumerate}
\item[(i)] It follows from the fact that $0\in \rho (A)$ and the contraction mapping theorem that for any real number $\la$ with $|\la|<\|A^{-1} \|^{-1} $, the operator $i\la I-A =A(i\la A^{-1} -I)$ is invertible. Furthermore, $\|(i\la I-A)^{-1} \|$ is a continuous function of $\la $ in the interval $\left(-\|A^{-1} \|^{-1} ,\|A^{-1} \|^{-1} \right) $.\\
\item[(ii)] If $\sup \left\{\|(i\la I-A)^{-1}\|\ | \ |\la|<\|A^{-1}\|^{-1} \right\}=M < \infty$, then by the contraction mapping theorem, the operator $i\la I-A =(i\la_0 I-A)(I+i(\la-\la_0 )(i\la_0 I-A)^{-1} )$ with $|\la_0 |<\|A^{-1}\|^{-1}$ is invertible for $|\la -\la_0 | <M^{-1}$. It turns out that by choosing $|\la_0 | $ as close to $\|A^{-1}\|^{-1}$ as we can, we conclude that $\left\{ \la \ | \ |\la|<\|A^{-1}\|^{-1}+M^{-1}\right \} \subset \rho (A)$ and $\|(i\la I -A)^{-1}\|$ is a continuous function of $\la $  in the interval $\left(-\|A^{-1}\|^{-1}-M^{-1},\|A^{-1}\|^{-1}+M^{-1} \right).$\\ 
\item[(iii)] Thus it follows from the argument in (ii) that if \eqref{A1} is false, then there is $\omega \in\R $ with $\|A^{-1}\|^{-1}\leq |\omega| <\infty $ such that $\left\{i\la \ | \ |\la |<|\omega| \right\} \subset \rho(A)  $ and $\sup \left\{\|(i\la -A)^{-1}\| \ | \ |\la |<|\omega | \right\}=\infty$. It turns out that there exists a sequence $\left\{(\lambda_n,{U}_n)\right\}_{n\geq 1}\subset \mathbb{R}\times D\left(A\right),$ with $\lambda_n \to  \omega$ as $n\to\infty,$ $|\lambda_n|<|\omega|$ and $\left\|{U}_n\right\|_{H} = 1$, such that
\begin{equation*}
(i \lambda_n I - A){U}_n ={F}_n \to  0 \ \textrm{in} \ {H},\qquad\text{as }n\to\infty.
\end{equation*}
Then, we will prove \eqref{A1}  by finding a contradiction with $\left\|{U}_n\right\|_{H} = 1$ such as $\left\| {U}_n\right\|_{H}  \to 0 .$\xqed{$\square$}.\\ \end{enumerate}}
\end{Remark}

\noindent Concerning the characterization of polynomial stability stability of a $C_0-$semigroup of contraction $\left(e^{tA}\right)_{t\geq 0}$, we rely on the following result due to Borichev and Tomilov \cite{Borichev01} (see also \cite{Batty01} and \cite{RaoLiu01}).
\begin{Theorem}\label{bt}{\rm
	Assume that $A$ is the generator of a strongly continuous semigroup of contractions $\left(e^{tA}\right)_{t\geq0}$  on $\mathcal{H}$.   If   $ i\mathbb{R}\subset \rho(\mathcal{A})$, then for a fixed $\ell>0$ the following conditions are equivalent}
	\begin{equation}\label{h1}
	\sup_{\lambda\in\mathbb{R}}\left\|\left(i\lambda I-\mathcal{A}\right)^{-1}\right\|_{\mathcal{L}\left(\mathcal{H}\right)}=O\left(|\lambda|^\ell\right),
	\end{equation}
	\begin{equation}\label{h2}
	\|e^{t\mathcal{A}}U_{0}\|^2_{\HH} \leq \frac{C}{t^{\frac{2}{\ell}}}\|U_0\|^2_{D(\AA)},\hspace{0.1cm}\forall t>0,\hspace{0.1cm} U_0\in D(\AA),\hspace{0.1cm} \text{for some}\hspace{0.1cm} C>0.
	\end{equation}\xqed{$\square$}
\end{Theorem}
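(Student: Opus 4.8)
The plan is to prove the two implications of this Borichev--Tomilov characterization separately, the whole argument resting on the fact that $\mathcal H$ is a Hilbert space, so that Plancherel's theorem is available. Write $T(t)=e^{tA}$, and observe that $i\R\subset\rho(A)$ forces $0\in\rho(A)$, whence $A^{-1}\in\mathcal L(\mathcal H)$ and $\|U_0\|_{D(A)}\simeq\|AU_0\|$; consequently \eqref{h2} is equivalent to the operator estimate $\|T(t)A^{-1}\|_{\mathcal L(\mathcal H)}\le C\,t^{-1/\ell}$ for $t\ge 1$, which is the form I would actually work with.

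For the main implication \eqref{h1}$\Rightarrow$\eqref{h2}, I would first enlarge the resolvent set to the left of the imaginary axis. Writing $\lambda=\alpha+i\beta$ and factoring $\lambda-A=(i\beta-A)\bigl(I+\alpha(i\beta-A)^{-1}\bigr)$, a Neumann series converges as soon as $|\alpha|\,\|(i\beta-A)^{-1}\|<1$; together with \eqref{h1} this shows that $(\lambda-A)^{-1}$ is holomorphic and satisfies $\|(\lambda-A)^{-1}\|\le C(1+|\beta|)^{\ell}$ on a region $\Omega=\{\alpha+i\beta:\ \alpha>-c(1+|\beta|)^{-\ell}\}$. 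The analytic heart is the vector-valued Plancherel identity: for $x,y\in\mathcal H$ and $\varepsilon>0$, the function $t\mapsto e^{-\varepsilon t}\langle T(t)x,y\rangle\,\mathbf 1_{t\ge0}$ has Fourier transform $s\mapsto\langle(\varepsilon+is-A)^{-1}x,y\rangle$, whence
\begin{equation*}
\int_{\R}\bigl|\langle(\varepsilon+is-A)^{-1}x,y\rangle\bigr|^2\,ds=2\pi\int_0^\infty e^{-2\varepsilon t}\,\bigl|\langle T(t)x,y\rangle\bigr|^2\,dt.
\end{equation*}
Applying this to the composed resolvent $(\lambda-A)^{-1}A^{-1}$ (the extra $A^{-1}$ buys an additional factor $(1+|s|)^{-1}$ of decay along vertical lines), and using the holomorphy together with the polynomial bound on $\Omega$ to shift the line of integration toward the imaginary axis, I would derive a weighted $L^2$-in-time estimate of the shape $\int_0^\infty\|T(t)A^{-1}x\|^2\,w(t)\,dt\le C\|x\|^2$.

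The last step is to upgrade this integrated decay to the sharp pointwise rate. Here I would use that $t\mapsto\|T(t)A^{-1}x\|$ is, up to the boundedness of the semigroup, essentially controlled on dyadic time-scales, combined with an interpolation/rescaling argument that converts the resolvent exponent $\ell$ into the temporal exponent $1/\ell$; optimizing the threshold between the two contributions in the contour integral yields $\|T(t)A^{-1}\|\le C\,t^{-1/\ell}$, which is \eqref{h2}. I expect this passage from a weighted $L^2$ estimate to the sharp pointwise rate, carried out simultaneously with the contour optimization, to be the principal obstacle: the Neumann-series fattening and the Plancherel identity are routine, but keeping the dependence on $|\beta|$ sharp throughout the contour shift is delicate.

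Finally, the converse \eqref{h2}$\Rightarrow$\eqref{h1} is the easier direction, and I would obtain it from the Laplace representation $(\mu-A)^{-1}A^{-1}=\int_0^\infty e^{-\mu t}T(t)A^{-1}\,dt$, valid for $\Re\mu\ge0$ because the decay $\|T(t)A^{-1}\|\le C\,t^{-1/\ell}$ (with $\ell$ possibly requiring an interpolation against a higher power of $A^{-1}$ to secure integrability at infinity) makes the integral converge up to the imaginary axis. Estimating this integral by splitting the time interval at a threshold depending on $|\mu|$, and then restoring the full resolvent through the identity $A(is-A)^{-1}=is(is-A)^{-1}-I$, gives $\|(is-A)^{-1}\|=O(|s|^{\ell})$, completing the equivalence.
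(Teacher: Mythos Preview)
The paper does not prove this theorem. Theorem~\ref{bt} is stated in the appendix as a known result due to Borichev and Tomilov (reference \cite{Borichev01}, with related references \cite{Batty01} and \cite{RaoLiu01}), and the paper simply quotes it for later use in establishing polynomial decay; the $\square$ at the end marks the end of the statement, not of a proof. So there is no ``paper's own proof'' to compare your proposal against.

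That said, your sketch is broadly in the spirit of the actual Borichev--Tomilov argument: the Neumann-series fattening of the resolvent region, the Plancherel identity on vertical lines, and the contour-optimization step that trades the exponent $\ell$ on the resolvent side for $1/\ell$ on the time side are indeed the main ingredients. The part you correctly flag as the principal obstacle --- passing from an integrated $L^2$-in-time bound to the sharp pointwise rate $t^{-1/\ell}$ --- is genuinely the delicate step, and your description of it is more of a placeholder than an argument; in the original paper this is handled via a careful functional-calculus construction and is not a routine interpolation. If your goal were actually to reproduce the proof, that step would need to be fleshed out substantially. But for the purposes of this paper, no proof is expected: the theorem is invoked as a black box.
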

 
\section*{Acknowledgments}
\noindent The authors thanks professor Serge Nicaise for his valuable discussions and comments.\\ \\
Mohammad Akil would like to thank the Lebanese University for its support.\\ \\
Haidar Badawi would like to thank the LAMAV laboratory of Mathematics of the Universit\'e polytechnique  Hauts-De-France Valenciennes for its support. \\ \\ 
Ali Wehbe would like to thank the CNRS for its support.

\end{document}